\def\eps{\varepsilon}
\def\R{{\mathbb R}}
\def\N{{\mathbb N}}
\def\C{{\mathbb C}}% complex numbers
\def\R{{\mathbb R}}% real numbers
\def\N{{\mathbb N}}% nonnegative integers
\def\Sch{{\mathcal S}}% Schwartz space
\def\U{\mathcal U}
\def\wp{\mathcal{WP}}
\def\op{{\rm op}}
\def\eps{\varepsilon}
\def\Im{\text{\rm Im}}
\def\e{{\rm e}}
\def\<{{\langle}}
\def\>{{\rangle}}
\def\({{\left(}}
\def\beq{\begin{equation}}   \def\eeq{\end{equation}}
\def\bea{\begin{eqnarray}}  \def\eea{\end{eqnarray}}
\newcommand{\1}{\mathbb  I}
\theoremstyle{plain}
\newtheorem{theorem}{Theorem}[section]
\newtheorem{definition}[theorem]{Definition}
\newtheorem{assumption}[theorem]{Assumption}
\newtheorem{lemma}[theorem]{Lemma}
\newtheorem{corollary}[theorem]{Corollary}
\newtheorem{proposition}[theorem]{Proposition}
\theoremstyle{remark}
\newtheorem{remark}[theorem]{Remark}
\newtheorem{example}[theorem]{Example}
\def\ech{\color{black}\,}
\begin{document}

\title[]{Propagation of wave packets for systems presenting codimension one crossings}
\author[C. Fermanian Kammerer]{Clotilde~Fermanian-Kammerer}
\address{LAMA, UMR CNRS 8050,
Universit\'e Paris EST,
61, avenue du G\'en\'eral de Gaulle, 
94010 Cr\'eteil Cedex\\ France}
\email{Clotilde.Fermanian@u-pec.fr}
\author[C. Lasser]{Caroline Lasser}
\address{Zentrum Mathematik - M8
Technische Universit\"at M\"unchen
85747 Garching bei M\"unchen, Germany}
\email{classer@ma.tum.de}

\author[D. Robert]{Didier Robert}
\address{Laboratoire de math\'ematiques Jean Leray
UMR 6629 du CNRS
Universit\'e de Nantes
2, rue de la Houssini\`ere  44322 Nantes Cedex 3, France}
\email{didier.robert@univ-nantes.fr}

\begin{abstract} 
We analyze the propagation of wave packets through general Hamiltonian systems presenting 
codimension one eigenvalue crossings. The class of time-dependent Hamiltonians we consider is of general 
pseudodifferential form with subquadratic growth. It comprises Schr\"odinger operators with matrix-valued potential, 
as they occur in quantum molecular dynamics, but also covers matrix-valued models of solid state physics 
describing the motion of electrons in a crystal. We calculate precisely the non-adiabatic effects 
of the crossing in terms of a transition operator, whose action on coherent states can be spelled out 
explicitly.
\end{abstract}

\keywords{Gaussian states, coherent states, wave packets, systems of Schr\"odinger equations, eigenvalue crossing, codimension one crossing.}

\maketitle

%\tableofcontents

\section{Introduction}

We consider systems of $N\ge2$ equations of pseudodifferential form 
\begin{equation}\label{system}
i\eps\partial_t \psi^\eps = \widehat H(t) \psi^\eps,\;\;\psi^\eps_{|t=t_0}=\psi^\eps_0,
\end{equation}
where $(\psi^\eps_0)_{\eps>0}$ is a bounded family in $L^2(\R^d,\C^N)$. 
The Hamiltonian operator 
\[
\widehat H(t) = H(t,x,-i\eps\nabla_x)
\] 
is the semi-classical Weyl quantization of a time-dependent Hamiltonian
\[
H: \R\times\R^{d}\times\R^d \to \C^{N\times N}, \quad (t,x,\xi)\mapsto  H(t,x,\xi),
\]
that is a smooth matrix-valued function and satisfies suitable growth conditions guaranteeing a well-defined 
and unique solution of the system. We denote with a ``$\widehat \cdot$'' the semi-classical Weyl quantization, the definition of which is recalled in Section~\ref{sec:functionspaces}. Phase space variables are denoted by $z=(x,\xi)\in \R^{2d}$. The semi-classical parameter $\eps>0$ is assumed to be small.
The initial data are wave packets associated with one of the eigenspaces 
of the Hamiltonian matrix. That is, 
\begin{equation}\label{initialdata}
\psi^\eps_0 = \widehat {\vec V_0}\,{\mathcal{WP}}^\eps_{z_0}\varphi_0,
\end{equation}
where $\vec V_0(z)$ is a normalized eigenvector of the matrix $H(t_0,z)$ such that $\vec V_0:\R^{2d}\to\C^N$ is 
a smooth vector-valued function, and ${\mathcal{WP}}^\eps_{z_0}\varphi_0$ denotes the wave packet 
transform of a Schwartz function $\varphi_0\in{\mathcal S}(\R^d,\C)$ for a phase space point 
$z_0=(x_0,\xi_0)\in\R^{2d}$,
 \beq\label{wpdef}
{\mathcal{WP}}^{\eps}_{z_0}\varphi_0(x)= \eps^{-d/4} \,{\rm e}^{i\xi_0\cdot(x-x_0)/\eps} 
\varphi_0\!\left(\tfrac{x-x_0}{\sqrt\eps}\right).
\eeq

\medskip
Our aim is to describe the structure of the system's solutions in the case, when eigenvalues of the 
Hamiltonian matrix $H(t,z)$ coincide for some point $(t,z)\in\R\times\R^{2d}$, while all eigenvalues and 
eigenvectors retain their smoothness. The literature refers to them as 
codimension one crossings. In the presence of eigenvalue crossings the key assumption for 
space-adiabatic theory (the existence of a positive gap between eigenvalues) is violated, and the knowledge of the dynamics associated with one of the 
eigenvalues is not enough any more. Moreover, in addition to the necessity to include more than one 
eigenvalue for an effective dynamical description, also the non-adiabatic transitions between 
the coupled eigenspaces have to be properly resolved. 
These questions have already been addressed for special systems corresponding to the following
physical settings: In his monograph \cite[Chapter~5]{Hag94}, G.~Hagedorn investigated 
Schr\"odinger Hamiltonians with matrix-valued potential,
\begin{equation}\label{ex:hag}
\widehat H_S = -\frac{\eps^2}{2}\Delta_x\, \1_{\C^N} + V(x),\quad V\in
{\mathcal C}^\infty(\R^d,\C^{N\times N}). 
\end{equation}
More recently, in \cite{WW}, A. Watson and M. Weinstein studied models arising in solid state 
physics in the context of Bloch band decompositions, 
\begin{equation}\label{ex:WW}
\widehat H _A= A(-i\eps\nabla_x) + W (x) \1_{\C^2},\quad A\in{\mathcal C}^\infty(\R^d,\C^{N\times N}),\quad 
W\in{\mathcal C}^\infty(\R^d,\C). 
\end{equation}
In both settings, the eigenvalues of the matrices $V(x)$, $x\in\R^d$, respectively $A(\xi)$, $\xi\in\R^d$, have a codimension one crossing of their eigenvalues.  

\medskip
We  develop here a new analytical method which applies for  general matrix-valued Hamiltonians
with a codimension one crossings of eigenvalues, which might also have multiplicity larger than one. In particular, we give a general and unified computation of the transfer operator which describes the non-adiabatic interactions due to the crossing. The non-adiabatic transition formulae of Corollary~\ref{cor:gaussian}
are explicit and derived in a self-contained and more accessible way than the previous ones in the literature. Due to their explicit form, they can directly be applied to numerical simulations based on thawed Gaussians that are currently investigated in chemical physics, see for example \cite{Vanicek} or the recent review 
\cite{VB}. 

\medskip
As another byproduct of our method, we also obtain an effortless generalization of the semi-classical Herman--Kluk approximation to the case of systems with eigenvalue gaps (see Corollary~\ref{cor:hk} below). We expect that 
a refinement of the present error analysis is possible, such that our codimension one result can be extended to the Herman--Kluk framework as well. This is work in progress, which might also contribute to the algorithmical development of superpositions of surface-hopping approximations using frozen (or thawed) Gaussian wave packets in the spirit of \cite{WH}, 
see also \cite{Lu}.

\medskip 
We assume that the matrix $H(t,z)$ has a smooth eigenvalue $h_1(t,z)$, the eigenspace of which admits a smooth eigenprojector $\Pi_1(t,z)$, that is,
\[
H(t,z) \Pi_1(t,z) = \Pi_1(t,z) H(t,z) = h_1(t,z) \Pi_1(t,z). 
\] 
We shall consider two situations, depending on whether the  eigenvalue $h_1(t,z)$ crosses another smooth  eigenvalue $h_2(t,z)$ or not. Because we assume  the Hamiltonian matrix $H(t,z)$ to be independent of $\eps$, then, in {\it the  gap situation}, the eigenvalue $h_1(t,z)$ is separated from $h_2(t,z)$ by a gap larger than some fixed positive real number $\delta_0>0$ that is  of order one with respect to the semi-classical parameter $\eps$. 
In the second case, {\it the smooth crossing} case,  both eigenvalues
 are smooth and  have  smooth eigenprojectors. Note that  it is not  the case in general since eigenvalues may develop singularities at the crossing; however,  we do not consider that situations here. 
We shall also assume that $H(t,z)$ has no other eigenvalues since one can reduce to that case as soon as  the set of these two eigenvalues is separated from the remainder of the spectrum of the matrix~$H(t,z)$ by a gap (uniformly in $t$ and $z$). 

\medskip 
The gap situation is well understood  and corresponds to adiabatic situations that have been studied by several authors (see in particular the lecture notes~\cite{Te} of S. Teufel or the memoirs\cite{MS} of A. Martinez and V. Sordoni and note that the thesis~\cite{bi} is devoted to wave packets in the adiabatic situation). For avoided crossings, the coupling of the gap and the semi-classical parameter violates the key requirement for adiabatic decoupling. 
The resulting non-adiabatic dynamics have been studied for wave packets by G. Hagedorn and A. Joye in 
\cite{HagJ98,HagJ99} and for the Wigner function of general initial data in \cite{FL}. 
Smooth crossings have been less studied so far. Some results on the subject focus on the evolution at leading order in $\eps$ of quadratic quantities of the wave function for initial data which are not necessarily wave packets (see~\cite{J,DFJ} and the references therein).  The main results devoted to wave packet propagation through smooth eigenvalue crossings are the references~\cite{Hag94} and~\cite{WW} mentioned above. There, for the specific Hamiltonian operators \eqref{ex:hag} and \eqref{ex:WW}, respectively, the authors gave rather explicit descriptions of the propagated wave packet,  exhibiting non-adiabatic transitions that occur at the crossing between the two eigenvalues that are of order $\sqrt\eps$. As in these contributions, we assume  that the crossing set
\begin{equation}\label{def:Upsilon}
\Upsilon=\{(t,z)\in\R^{2d+1},\; h_1(t,z)=h_2(t,z)\}
\end{equation}
of two smooth eigenvalues $h_1(t,z)$ and $h_2(t,z)$ is a codimension one manifold.

\medskip
Our main result (Theorem~\ref{theo:WPcodim1} below) makes the following assumptions for the initial data~$\psi^\eps_0$. 
Let $v^\eps_0$ be a wave packet centered in a phase space point $z_0$, that is, 
\[
v^\eps_0 = {\mathcal{WP}}_{z_0}^\eps \varphi_0\quad\text{for some}\quad \varphi_0\in{\mathcal S}(\R^d,\C).
\]
Let $\vec V_0(z)$ be a smooth normalized eigenvector of $H(t_0,z)$, that is, $\vec V_0\in{\mathcal C}^\infty(\R^{2d},\C^N) $ is a smooth vector-valued function that satisfies in a neighborhood $U$ of $z_0$,
\[
H(t_0,z)\vec V_0(z)=h_1(t_0,z)\vec V_0(z)\quad\text{for all}\quad z\in U.
\]
Then, we define the initial wave packet according to~\eqref{initialdata}.
Let $z_1(t)$ denote the classical trajectory associated with the eigenvalue $h_1(t,z)$ initiated in wave packet's core $z_0$. Let $t^\flat>t_0$ be the first time, when the trajectory $z_1(t)$ meets the crossing set $\Upsilon$, 
and let $z_2(t)$ denote the classical trajectory associated with the second eigenvalue $h_2(t,z)$, 
that is initiated in the crossing point $z_1(t^\flat)$. That is, 
\begin{align*}
\dot z_1(t) &= J \partial_z h_1(t,z_1(t)),\quad z_1(t_0)=z_0,\\
\dot z_2(t) &= J \partial_z h_2(t,z_2(t)),\quad z_2(t^\flat)=z_1(t^\flat).
\end{align*}\ech
Then, the solution of system \eqref{system} satisfies 
$$\psi^\eps(t) = \widehat { \vec V_1(t)} {\mathcal{WP}}^\eps_{z_1(t)} (\varphi_1^0(t)+\sqrt\eps\varphi_1^1(t)) + \sqrt\eps {\bf 1}_{t>t^\flat} \widehat { \vec V_2(t)} {\mathcal{WP}}^\eps_{z_2(t)} \varphi_2(t) +o(\sqrt\eps),$$
where the profiles of the wave packets 
\[
{\mathcal{WP}}^\eps_{z_1(t)} (\varphi_1^0(t)+\sqrt\eps\varphi_1^1(t))\quad\text{and}\quad
{\mathcal{WP}}^\eps_{z_2(t)} \varphi_2(t)\] 
are Schwartz functions 
$\varphi_1^0(t)$, $\varphi_1^1(t)$, and $\varphi_2(t)$,  
that  solve $\eps$-independent PDEs on $[t_0,t^\flat]$ and $[t^\flat,t_0+T]$, respectively, that are explicitly given in terms of the classical dynamics associated with the eigenvalues $h_1(t,z)$ and $h_2(t,z)$. 
The profile associated with the second eigenvalue is generated by the leading order profile of the first eigenvalue via
$$ \varphi_2(t^\flat) ={\mathcal T}^\flat  \varphi_1^0(t^\flat),$$
where the non-adiabatic transfer operator ${\mathcal T}^\flat$ is a metaplectic transform (which implies that the structure of Gaussian states is preserved, see Corollary~\ref{cor:gaussian}). The two families $\vec V_1(t,z)$ and $\vec V_2(t,z)$ are smooth normalized eigenvectors for $h_1(t,z)$ and $h_2(t,z)$, respectively, that are obtained by  parallel transport.

\medskip 
We point out that, in the uniform gap case, an initial datum that is associated with one eigenvalue issues 
a solution at time $t$ that is associated with the same eigenvalue up to terms of order $\eps$, which is the standard order of the adiabatic approximation, while for smooth crossings a perturbative term of order $\sqrt\eps$ associated with the other eigenvalue has to be taken into account for an order $\eps$ approximation.

\medskip 
Before giving a more precise statement of the result, we mention that the propagation of  wave packets was also studied for  nonlinear systems in~\cite{CF11,Hari1,Hari2}, including situations with avoided crossings~\cite{Hari2}. However, nonlinear systems with codimension one crossings have not yet been analysed. We expect that our result can be extended when imposing appropriate assumptions on the nonlinearity. 

\medskip
 \noindent{\bf Acknowledgements}. Didier Robert thanks Jim Ralston for his comments on a first version of our paper, Clotilde Fermanian Kammerer thanks the Von Neumann Professorship program of the Technische Universit\"at M\"unchen which gives her the opportunity to work on this article during the academic year 2019, and Caroline Lasser thanks the I-Site Future program for the visiting professorship 2020. Part of this work has also been 
 supported by the CNRS 80$\,|\,$Prime project.

\section{Preliminary results}
In this section, we introduce the relevant function spaces for the unitary propagation and also recall some known results on wave packets for scalar evolution equations. 

\subsection{Function spaces and quantization}\label{sec:functionspaces}
Let $a\in{\mathcal C}^\infty(\R^{2d})$ be a smooth scalar-, vector- or matrix-valued function with adequate control on the growth of derivatives. Then, the Weyl operator $\widehat a = \op^w_\eps(a)$ is defined by 
$$\op^w_\eps(a)f(x):= \widehat a f(x) := (2\pi\eps)^{-d} \int_{\R^{2d}} a\!\left({x+y\over 2}, \xi\right) 
{\rm e}^{i\xi\cdot(x-y)/\eps} f(y) \,dy\, d\xi$$
for all $f\in{\mathcal S}(\R^d)$.  
According to \cite{MaRo}, the unitary propagator ${\mathcal U}^\eps_H(t,t_0)$ associated with the Hamiltonian 
operator $\widehat H(t)$,
\[
i\eps\,\partial_t\, {\mathcal U}^\eps_H(t,t_0) = \widehat H(t)\, {\mathcal U}^\eps_H(t,t_0),\quad 
{\mathcal U}^\eps_H(t_0,t_0) = \1_{L^2(\R^d)},
\]
is well defined  when the map $(t,z)\mapsto H(t,z)$ is in ${\mathcal
  C}^\infty(\R\times \R^{2d},\C^{N\times N})$, valued in the set of self-adjoint matrices  and that it has  subquadratic growth, i.e.
\begin{equation}\label{hyp:H}
\forall \alpha\in \N^{2d} ,\;\;|\alpha|\geq 2,\;\;
\exists C_\alpha>0,\;\;\sup_{(t,z)\in\R\times \R^{2d}}\| \partial^\alpha_z H(t,z) \|_{\C^{N\times N}}\leq C_\alpha.
\end{equation}
These assumptions  guarantee the existence of solutions to equation~(\ref{system}) in $L^2(\R^d,\C^N)$ and, more generally, in the  functional spaces 
$$\Sigma_\eps^k(\R^d)=\{ f\in L^2(\R^d),\;\;\forall \alpha,\beta\in\N^d,\;\; |\alpha|+|\beta| \leq k,\;\; x^\alpha (\eps \partial_x)^\beta f\in L^2(\R^d)\}$$
endowed with the norm 
$$\| f\|_{\Sigma^k_\eps} = \sup_{|\alpha|+|\beta| \leq k}\| x^\alpha (\eps \partial_x)^\beta f\|_{L^2}.$$
We note that also with respect to the $\Sigma_\eps^k(\R^d)$ spaces, the unitary propagator ${\mathcal U}^\eps_H(t,t_0)$ is $\eps$-uniformly-bounded in the sense, that for all $T>0$ there exists $C>0$ such that 
\[
\sup_{t\in[t_0,t_0+T]}\|{\mathcal U}^\eps_H(t,t_0)\|_{{\mathcal L}(\Sigma^k_\eps)} \,\le\, C.
\]

\begin{remark}
The analysis below could apply to more general settings as long as the classical quantities are well-defined in finite time with some technical improvements that are not discussed here.
\end{remark} 
 
\subsection{Scalar propagation and scalar classical quantities}\label{sec:defclas}

 The most interesting property of the coherent states is the stability of their structure through evolution, which can be described by means of classical quantities. Note that  for all $z\in\R^{2d}$ and $k\in\N$, the operator $\varphi\mapsto \wp^\eps_{z}\varphi$
is a unitary map in~$L^2(\R^d)$ which maps continuously  $\Sigma^1_k$  into $\Sigma^\eps_k$ 
with a continuous inverse. Other elementary properties of the wave packet transform are listed in 
Lemma~\ref{lem:coherent}.
We shall use the notation
\begin{equation}\label{def:J}
J = \begin{pmatrix}0 & \1_{\R^d}\\ -\1_{\R^d} & 0\end{pmatrix}.
\end{equation}
For smooth functions 
$f,g\in{\mathcal C}^\infty(\R^{2d})$, that might be scalar-, vector- or matrix-valued, we denote  the Poisson bracket by
$$
\{f,g\}:= J\nabla f\cdot \nabla g= \sum_{j=1}^d \left(\partial_{\xi_j} f \partial_{x_j} g- \partial_{x_j} f \partial_{\xi_j} g\right).
$$
Let $h:\R\times\R^{2d}\to\R$, $(t,z)\mapsto h(t,z)$ be a smooth function of subquadratic growth\eqref{hyp:H}.We now review the main tools for the semi-classical propagation of wave-packets. We let $z(t) = (q(t),p(t))$ denote the {\it classical Hamiltonian trajectory} issued from a phase space point $z_0$ at time $t_0$, that is defined by the ordinary differential equation
$$\dot z(t) = J \partial_z h(t,z(t)),\;\; z(t_0)=z_0.$$
The trajectory $z(t) = z(t,t_0,z_0)$ depends on the initial datum and defines 
via $\Phi_h^{t,t_0}(z_0) = z(t,t_0,z_0)$ the associated {\it flow map} of the Hamiltonian function $h$. 
We will also use the trajectory's {\it action integral}
\begin{equation}
\label{def:S} 
S(t,t_0,z_0) = \int_{t_0}^t \left(p(s)\cdot \dot q(s)-h(s,z(s)) \right) ds,
\end{equation}
and the Jacobian matrix of the flow map 
\[
F(t,t_0,z_0) = \partial_z \Phi_h^{t,t_0}(z_0). 
\]
Note that $F(t,t_0,z_0)$ is a symplectic $2d\times 2d$ matrix, that satisfies the linearized flow equation
\begin{equation}\label{eq:lin}
\partial_t F(t,t_0,z_0) = J {\rm Hess}_zh(t,z(t)) \, F(t,t_0,z_0),\;\;F(t_0,t_0,z_0) = \1_{\R^{2d}}.
\end{equation}
We denote its blocks by 
\begin{equation}\label{eq:F}
F(t,t_0,z_0) = 
 \begin{pmatrix}  A(t,t_0,z_0) &B(t,t_0,z_0) \\ C(t,t_0,z_0) &D(t,t_0,z_0)\end{pmatrix}.
\end{equation}
In a last step, we define the corresponding unitary evolution operator, the {\it metaplectic transformation}, 
that acts on square integrable functions in $L^2(\R^d)$.

\begin{definition}[Metaplectic transformation]
Let $h:\R\times\R^{2d}\to\R$ be a smooth function of subquadratic growth~(\ref{hyp:H}). Let $t,t_0\in\R$ and $z_0\in\R^{2d}$. 
Let $F(t,t_0,z_0)$ be the solution of the linearized flow equation \eqref{eq:lin} associated with the Hamiltonian function $h(t)$. Then, we call the unitary operator 
 \[
  {\mathcal M}[F(t,t_0,z_0)] : \; \varphi_0\mapsto \varphi(t) 
\]
that associates with an initial datum $\varphi_0$ the solution at time $t$ of the Cauchy problem 
$$
i\partial_t \varphi= \op^w_1({\rm Hess}_z h(t, z(t))z\cdot z) \varphi,\;\; \varphi(t_0)=\varphi_0,
$$
the {\rm metaplectic transformation} associated with the matrix $F(t,t_0,z_0)$. 
\end{definition}

Using these three $\eps$-independent building blocks -- the classical trajectories, the action integrals, and the 
metaplectic transformations associated with the linearized flow map -- we can approximate 
the action of the unitary propagator  
\[
i\eps\partial_t\, {\mathcal U}_h^\eps(t,t_0) = \op^w_\eps(h(t))\, {\mathcal U}_h^\eps(t,t_0),\quad 
{\mathcal U}_h^\eps(t_0,t_0) = \1_{L^2(\R^d)}
\]
on wave packets as follows.\ech

\begin{proposition}\label{wpevol1}[\cite[\S4.3]{CombescureRobert}] 
Consider a smooth scalar Hamiltonian $h(t)$ of subquadratic growth~(\ref{hyp:H}). 
Let $T>0$, $k\ge 0$, $z_0\in\R^{2d}$, and $\varphi_0\in{\mathcal S}(\R^d)$. 
Then, there exists a positive constant $C>0$ such that
\[
\sup_{t\in[t_0,t_0+T]}\left\|{\mathcal U}_h^\eps(t,t_0) \wp^\eps_{z_0}\varphi_0 -  
{\rm e}^{\frac{i}{\eps}S(t,t_0,z_0)} \wp^\eps_{z(t)} \varphi^\eps(t)\right\|_{\Sigma_\eps^k} \le C\eps,
\]
where the profile function $\varphi^\eps(t)$ is given by
\begin{equation}\label{eq:profile}
\varphi^\eps(t) = {\mathcal M}[F(t,t_0,z_0)] \left(1+\sqrt\eps\, b_1(t,t_0,z_0)\right) \varphi_0,
\end{equation}
and the correction function $b_1(t,t_0,z_0)$ satisfies
\begin{equation}\label{rodrigues}
b_1(t,t_0,z_0) \varphi_0 = 
\sum_{\vert\alpha\vert=3} \frac{1}{\alpha!} \frac{1}{i} \int_{t_0}^{t} \partial_z^\alpha h(s,z(s))\, 
\op_1^w[(F(s,t_0,z_0)z)^{\alpha}] \,\varphi_0\,ds. 
\end{equation}
The constant $C=C(T,k,z_0,\varphi_0)>0$ is independent of $\eps$ but depends on derivative bounds 
of the flow map $\Phi^{t,t_0}_h(z_0)$ for $t\in[t_0,t_0+T]$ and the $\Sigma^{k+3}_1$-norm of the 
initial profile $\varphi_0$.
\end{proposition}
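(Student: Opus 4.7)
The plan is to substitute the ansatz
$$\widetilde\psi^\eps(t) = e^{iS(t,t_0,z_0)/\eps}\,\wp^\eps_{z(t)}\varphi^\eps(t)$$
into the scalar Schr\"odinger equation $i\eps\partial_t u = \op^w_\eps(h(t))u$, derive an $\eps$-independent evolution equation for the profile $\varphi^\eps(t)$, solve it modulo $O(\eps)$, and close with a Duhamel estimate. The main algebraic tool is the intertwining relation of the wave packet transform with the Weyl quantization,
$$\op^w_\eps(a)\,\wp^\eps_{z(t)} = \wp^\eps_{z(t)}\,\op^w_1\!\bigl(a(z(t)+\sqrt\eps\,\cdot)\bigr),$$
which converts the semi-classical calculation into a Taylor expansion of $h$ in the rescaled variable $w\in\R^{2d}$ around the classical trajectory $z(t)$.

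First I would expand $h(t,z(t)+\sqrt\eps w)$ to third order in $\sqrt\eps$, obtaining a constant term, a linear-in-$w$ term, the quadratic Hessian term, and an $\eps^{3/2}$ cubic remainder $R_3(t,w,\eps)$ whose leading part is $\sum_{|\alpha|=3}\tfrac{1}{\alpha!}\partial_z^\alpha h(t,z(t))\,w^\alpha$. Computing $i\eps\partial_t\widetilde\psi^\eps$, the phase $e^{iS/\eps}$ contributes $h(t,z(t))-p(t)\cdot\dot q(t)$ and cancels the constant-in-$w$ part by the defining relation (\ref{def:S}) of $S$; the time derivative of the moving center $z(t)$ produces linear-in-$w$ multiplication and differentiation operators that cancel the $\sqrt\eps$-linear part precisely because $\dot z(t) = J\partial_z h(t,z(t))$. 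What remains after conjugating by $\wp^\eps_{z(t)}$ is the reduced equation
$$i\partial_t\varphi^\eps(t) = \tfrac{1}{2}\,\op^w_1\!\bigl(\mathrm{Hess}_z h(t,z(t))\,w\cdot w\bigr)\varphi^\eps(t) + \sqrt\eps\,\op^w_1\!\bigl(R_3(t,\cdot,\eps)\bigr)\varphi^\eps(t),$$
with $\varphi^\eps(t_0)=\varphi_0$. The principal generator is, by definition, that of the metaplectic transformation $\mathcal{M}[F(t,t_0,z_0)]$.

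Second, I would solve this equation by a Duhamel argument after conjugating away the principal part: writing $\varphi^\eps(t) = \mathcal{M}[F(t,t_0,z_0)]\chi^\eps(t)$ and using the symplectic covariance of Weyl quantization, $\op^w_1(a)\,\mathcal{M}[F] = \mathcal{M}[F]\,\op^w_1(a\circ F)$, the cubic monomial $w^\alpha$ is transformed into $(F(s,t_0,z_0)w)^\alpha$. Integrating the resulting $O(\sqrt\eps)$ source against time yields exactly formula (\ref{rodrigues}) for $b_1(t,t_0,z_0)$, confirming the ansatz (\ref{eq:profile}) with an $O(\eps)$ remainder in the profile equation. The Schwartz regularity of $b_1(t)\varphi_0$ follows because it is a polynomial of degree three in $(x,\partial_x)$ applied to the Schwartz function $\varphi_0$, and because $\mathcal{M}[F(t,t_0,z_0)]$ preserves every $\Sigma^k_1$ with constants depending on the blocks $A,B,C,D$ of $F$ in (\ref{eq:F}).

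Finally, the global $\Sigma^k_\eps$-bound would follow from Duhamel: plugging $\widetilde\psi^\eps(t)$ into the Schr\"odinger equation, the residual is $\eps\,\wp^\eps_{z(t)}$ applied to a Schwartz profile built from the remainder $R_3$ acting on $\mathcal{M}[F(t,t_0,z_0)]\varphi_0$ and on $b_1(t)\varphi_0$; applying the propagator $\mathcal{U}^\eps_h(t,t_0)$ to this residual and using its $\eps$-uniform $\Sigma^k_\eps$-boundedness (Section~\ref{sec:functionspaces}) together with the fact that $\wp^\eps_{z(t)}$ intertwines $\Sigma^k_1$ and $\Sigma^k_\eps$ yields the claimed $O(\eps)$ rate. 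The step I expect to require most care is controlling $\op^w_1(R_3(t,\cdot,\eps))$ on weighted Sobolev spaces: the cubic symbol is unbounded in $w$, so one cannot invoke plain Calder\'on--Vaillancourt, but the subquadratic assumption (\ref{hyp:H}) together with the Schwartz decay of $\varphi_0$ and polynomial control of the metaplectic image over $t\in[t_0,t_0+T]$ provide the required uniform-in-$\eps$ bounds.
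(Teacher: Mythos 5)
Your argument is correct and is precisely the standard proof of this result, which the paper itself does not prove but imports from \cite[\S 4.3]{CombescureRobert}: conjugation by the wave packet transform via \eqref{w:pseudo}, Taylor expansion of $h$ along $z(t)$ with the zeroth and first order terms cancelled by the action integral and Hamilton's equations, exact Egorov for the metaplectic conjugation producing $(F(s,t_0,z_0)z)^\alpha$ in \eqref{rodrigues}, and a Duhamel closure using the $\eps$-uniform $\Sigma^k_\eps$-boundedness of ${\mathcal U}^\eps_h$. One caution on the final step: since Duhamel costs a factor $\eps^{-1}$, the residual of the Schr\"odinger equation must be recorded as $O(\eps^{2})$ --- namely the $O(\eps)$ remainder of the profile equation times the overall prefactor $\eps$ --- which your accounting does deliver, even though one sentence describes the residual as only ``$\eps$ applied to a Schwartz profile.''
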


Let us discuss the especially interesting case of initial Gaussian states. 
{\it Gaussian states} are wave packets with complex-valued Gaussian profiles, whose  
covariance matrix is taken in the Siegel half-space ${\mathfrak S}^ +(d)$ of  $d\times d$ complex-valued symmetric matrices with positive imaginary part,
\[
{\mathfrak S}^+(d) = \left\{\Gamma\in\C^{d\times d},\  \Gamma=\Gamma^\tau,\ \Im\Gamma >0\right\}.
\]
With  $\Gamma\in{\mathfrak S}^+(d)$ we associate the Gaussian profile
\begin{equation}\label{def:gaussian}
g^\Gamma(x)
 := c_\Gamma\, {\rm e}^{\frac{i}{2}\Gamma x\cdot x},\quad x\in\R^d,
\end{equation}
where 
$c_\Gamma=\pi^{-d/4} {\rm det}^{1/4}(\Im\Gamma)$  
is a normalization constant in $L^2(\R^d)$. It is a non-zero complex number whose argument is determined by
 continuity according to the working environment.
By Proposition~\ref{wpevol1},  the  Gaussian states remain Gaussian under the evolution by 
${\mathcal U}^\eps_h(t,t_0)$. Indeed, for  $\Gamma_0\in{\mathfrak S}^+(d)$, we have 
\[
 {\mathcal M}[F(t,t_0,z_0)] g^{\Gamma_0}= g^{\Gamma(t,t_0,z_0)},
\]
where the width $\Gamma(t,t_0,z_0)\in\mathfrak S^+(d)$ and the corresponding normalization $c_{\Gamma(t,t_0,z_0)}$ are determined by the initial width $\Gamma_0$ and the Jacobian $F(t,t_0,z_0)$ according to
\begin{eqnarray}\label{def:Gamma}
\Gamma(t,t_0,z_0) 
&=& (C(t,t_0,z_0)+ D(t,t_0,z_0)\Gamma_0)(A(t,t_0,z_0) +B(t,t_0,z_0)\Gamma_0)^{-1}\\
\nonumber
c_{\Gamma(t,t_0,z_0)} 
&=& c_{\Gamma_0}\,{\rm det}^{-1/2}(A(t,t_0,z_0)+B(t,t_0,z_0)\Gamma_0).
\end{eqnarray}
The branch of the square root in ${\rm det}^{-1/2}$ is determined by continuity in time.

\medskip
The semiclassical wave packets used by G. Hagedorn in \cite{Hagedorn, Hagedorn81} are Gaussian wave packets, which are multiplied with a specifically chosen complex-valued polynomial function, that depends on the Gaussian's width matrix. If $A\in \mathcal C^\infty (\R^{2d},\C)$ is an arbitrary polynomial function, then ${\rm op}^w_1(A)  g^{\Gamma_0}$ is the product of a polynomial times a Gaussian, and we can again describe the 
action of the metaplectic transformation explictly. Indeed, by Egorov's theorem (which is exact here), 
\begin{align*}
{\mathcal M}[F(t,t_0,z_0)]({\rm op}_1^w(A)g^{\Gamma_0}) &= {\rm op}_1^w(A\circ F(t,t_0,z_0)) {\mathcal M}[F(t,t_0,z_0)] g^{\Gamma_0} \\
&={\rm op}_1^w(A\circ F(t,t_0,z_0)) g^{\Gamma(t,t_0,z_0)}.
\end{align*}
In particular, functions that are polynomials times a Gaussian remain of the same form under the evolution, even the polynomial 
degree is preserved.

\section{Precise statement of the results}
We now present our main results, that extend the previous theory of wave packet propagation for scalar evolution 
equations to systems associated with Hamiltonians that have smooth eigenvalues and eigenprojectors.

\subsection{Vector-valued wave packets and parallel transport}
We consider initial data that are vector-valued wave packets associated with a normalized  
eigenvector of the Hamiltonian matrix $H(t_0,z)$ as given in \eqref{initialdata}. 
The evolution of such a function also requires an appropriate evolution of its vector part, 
which we refer to as {\it parallel transport}.
The following construction generalizes \cite[Proposition~1.9]{CF11}, which was inspired by the work of
G.~Hagedorn, see \cite[Proposition~3.1]{Hag94}. Let us denote the complementary orthogonal projector 
by 
$\Pi^\perp(t,z) = \1_{\C^N}-\Pi(t,z)$ 
and assume that 
\begin{equation}\label{eq:decompose}
H(t,z) = h(t,z)\Pi(t,z) + h^\perp(t,z)\Pi^\perp(t,z)
\end{equation}
with the second eigenvalue given by 
$h^\perp(t,z) = {\rm tr}(H(t,z)) - h(t,z).$
We introduce the auxiliary matrices 
\begin{align}\label{def:Omega}
\Omega(t,z) &=-\tfrac12\big(h(t,z)-h^\perp(t,z)\big)\Pi(t,z)\{\Pi,\Pi\}(t,z)\Pi(t,z) ,\\*[1ex]
\label{def:K}
K(t,z) &= \Pi^\perp(t,z)\left(\partial_t\Pi(t,z)+\{h,\Pi\}(t,z)\right)\Pi(t,z),\\*[1ex]
\label{def:Theta} 
\Theta(t,z) &= i\Omega(t,z) + i(K-K^*)(t,z),
\end{align}
that are smooth and satisfy algebraic properties detailed in Lemma~\ref{lem:B} below. In particular, 
$\Omega$ is skew-symmetric and $\Theta$ is self-adjoint,
$\Omega = -\Omega^*\quad$ and $\Theta = \Theta^*$.
 We note, that for the Schr\"odinger and the Bloch Hamiltonian,  
\[
H_S(z) = \tfrac12|\xi|^2\,\1_{\C^N} + V(x)\quad \text{and}\quad H_A(z) = \begin{pmatrix} 0 & \xi_1+i\xi_2 \\ \xi_1-i\xi_2 & 0\end{pmatrix}  + W(x)\1_{\C^2},
\] 
the skew-symmetric $\Omega$-matrix vanishes, that is, $\Omega_S = 0$ and $\Omega_A = 0$. 
For Dirac Hamiltonians with electromagnetic potential or Hamiltonians that describe 
acoustic waves in elastic media, the $\Omega$-matrix need not vanish.

\begin{proposition} \label{prop:eigenvector}
Let $H(t,z)$ be a smooth Hamiltonian with values 
in the set of self-adjoint $N\times N$ matrices that is of subquadratic growth~(\ref{hyp:H}) and has a smooth spectral decomposition~\eqref{eq:decompose}. We assume that both eigenvalues are of subquadratic growth as well. We consider $\vec V_0\in{\mathcal C}_0^\infty(\R^{2d},\C^N)$ and $z_0\in \R^{2d}$ such that there exits a neighborhood $U$ of $z_0$ such that for all $z\in U$
$$\vec V_0 (z)=\Pi(t_0,z)\vec V_0(z)\quad\text{and}\quad \|\vec V_0(z)\|_{\C^N} = 1.$$
Then, 
  there exists a smooth normalized vector-valued function $\vec V(t,t_0)$ satisfying 
  $$\vec V(t,t_0,z)= \Pi(t,z)\vec V(t,t_0,z)\quad\text{for all}\quad z\in \Phi_h^{t,t_0}(U),$$
   such that for all $t\in\R$ and $z\in\Phi_h^{t,t_0}(U)$,
\begin{equation}\label{eq:eigenvector}
\partial_t \vec V(t,t_0,z) + \{h, \vec V\} (t,t_0,z) = -i\Theta(t,z)\vec V (t,t_0,z),\;\; \vec V(t_0,t_0,z) = \vec V_0(z).
\end{equation} 
\end{proposition}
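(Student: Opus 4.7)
The plan is to reduce the PDE to an ODE along the Hamiltonian flow. Fix $z_0'\in U$, let $z(t)=\Phi_h^{t,t_0}(z_0')$, and set $\vec W(t) := \vec V(t,t_0,z(t))$. Since $\dot z(t)=J\partial_z h(t,z(t))$ and $J\partial_z h\cdot\nabla_z = \{h,\cdot\}$, the operator $\partial_t + \{h,\cdot\}$ along characteristics is just $\tfrac{d}{dt}$, so \eqref{eq:eigenvector} becomes the linear ODE
\[
\dot{\vec W}(t) = -i\Theta(t,z(t))\,\vec W(t),\qquad \vec W(t_0)=\vec V_0(z_0').
\]
Smoothness and subquadratic growth of $H$ together with the subquadratic growth of the eigenvalues ensures $\Theta(t,z(t))$ is smooth and bounded on compact time intervals, so Cauchy--Lipschitz yields a unique global smooth solution. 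Normalization is automatic: because $\Theta=\Theta^*$ (asserted in Lemma~\ref{lem:B}), one has $\tfrac{d}{dt}\|\vec W\|^2 = 2\,\Re\langle -i\Theta\vec W,\vec W\rangle = 0$, so $\|\vec W(t)\|=\|\vec V_0(z_0')\|=1$.

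The crux of the proof is to show that $\vec W(t)$ remains in $\mathrm{Range}\,\Pi(t,z(t))$. Write $\vec W^\perp(t):=\Pi^\perp(t,z(t))\vec W(t)$ and differentiate: using $\tfrac{d}{dt}\Pi^\perp(t,z(t)) = (\partial_t\Pi^\perp + \{h,\Pi^\perp\})(t,z(t))$, we get
\[
\dot{\vec W}^\perp = \bigl(\partial_t\Pi^\perp + \{h,\Pi^\perp\}\bigr)(t,z(t))\,\vec W \;-\; i\Pi^\perp(t,z(t))\,\Theta(t,z(t))\,\vec W.
\]
Decomposing $\vec W = \Pi\vec W + \vec W^\perp$ on the right, I want the coefficient of $\Pi\vec W$ to vanish, i.e.\ $(\partial_t\Pi+\{h,\Pi\})\Pi + i\Pi^\perp\Theta\Pi = 0$ (using $\Pi^\perp=1-\Pi$). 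Differentiating $\Pi^2=\Pi$ yields $\Pi(\partial\Pi)\Pi=0$, so $(\partial_t\Pi+\{h,\Pi\})\Pi = \Pi^\perp(\partial_t\Pi+\{h,\Pi\})\Pi = K$. On the other hand, $\Omega = \Pi(\cdots)\Pi$ gives $\Pi^\perp\Omega\Pi=0$, while $K=\Pi^\perp K\Pi$ and $K^*=\Pi K^*\Pi^\perp$ give $\Pi^\perp K\Pi=K$ and $\Pi^\perp K^*\Pi=0$; hence $\Pi^\perp\Theta\Pi = i\Pi^\perp(K-K^*)\Pi = iK$. The two contributions cancel exactly, so $\vec W^\perp$ satisfies a homogeneous linear ODE with zero initial datum, forcing $\vec W^\perp\equiv 0$. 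This algebraic identity $\Pi^\perp\Theta\Pi=iK$, which pins down the correct choice of the sub-principal matrix $\Theta$, is the step I expect to be the heart of the argument.

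Finally, to recover $\vec V(t,t_0,z)$ on the moving open set $\Phi_h^{t,t_0}(U)$, I set $\vec V(t,t_0,z) := \vec W(t)$ with $z_0' = \Phi_h^{t_0,t}(z)$. Smooth dependence of ODE solutions on the initial parameter $z_0'$, combined with smoothness of the flow, yields $\vec V\in\mathcal C^\infty$; the chain rule $(\partial_t + \{h,\cdot\})\vec V(t,t_0,z) = \tfrac{d}{dt}\vec V(t,t_0,\Phi_h^{t,t_0}(z_0'))$ then shows \eqref{eq:eigenvector} holds pointwise, while the preservation property and normalization transfer directly from $\vec W$ to $\vec V$. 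This completes the construction.
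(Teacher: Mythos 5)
Your proof is correct and follows essentially the same route as the paper's: reduce \eqref{eq:eigenvector} to a linear ODE along the Hamiltonian flow, prove that the $\Pi^\perp$-component stays zero via the cancellation $\Pi^\perp\Theta\Pi=iK$ against the material derivative of the projector, and get normalization from the (skew-)self-adjointness of the generator. The only cosmetic differences are that the paper works with the generator $\Omega+K$ (dropping $K^*$, which vanishes on $\mathrm{Ran}\,\Pi$) and shows $\|\Pi^\perp Y\|$ is constant by orthogonality rather than by your homogeneous-ODE uniqueness argument; both are equally valid.
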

\ech

Proposition~\ref{prop:eigenvector} is proved in Appendix~\ref{app:parallel}. 
Note that it does not require any gap condition for the eigenvalues. We will use it in the crossing situation, with smooth eigenvalues and eigenprojectors. 

\medskip
The parallel transport is enough to describe at leading order  the propagation of wave-packets associated with an eigenvalue $h(t,z)$ of the matrix $H(t,z)$, that is uniformly separated from the remainder of the spectrum in the sense that  there exists $\delta>0$ such that 
for all $(t,z)\in\R\times\R^{2d}$,
\begin{equation}\label{hyp:gap}
{\rm dist}\left(h(t,z),\sigma(H(t,z))\setminus\{h(t,z)\}\right) > \delta.
\end{equation}
Note that, this gap assumption implies the existence of a Cauchy contour ${\mathcal C}$ in the complex plane, 
such that its interior only contains the eigenvalue $h(t,z)$ and no other eigenvalues of $H(t,z)$. 
Then, 
one can write
 the  eigenprojector as
$\Pi(t,z)=-{1\over 2\pi i}\oint_{\mathcal C} (H(t,z)-\zeta)^{-1} d\zeta,$
which implies that the projector 
$\Pi(t,z)$ inherits the smoothness properties of the Hamiltonian $H(t,z)$ in the presence of an eigenvalue gap.  However, if the symbol $\Pi$ is of course of matrix norm~$1$, its derivatives may grow as $|z|$ goes to infinity and we shall make assumption below (see~\eqref{hyp:gapinfinity}) in order to guarantee that this growth is at most polynomial.
Since the pioneering work of T. Kato \cite{K1}, numerous studies have been devoted to this {\it adiabatic situation}  (see for example \cite{N1,N2,Te,MS} and references therein). One can derive from these results the following statement of adiabatic decoupling.

\begin{theorem}\label{thm:decoupling}[\cite{Te,MS,CF11}] Let $H(t,z)$ be a smooth Hamiltonian with values 
in the set of self-adjoint $N\times N$ matrices and $h(t,z)$ a smooth eigenvalue of $H(t,z)$. Assume that both $H(t,z)$ and $h(t,z)$ are of subquadratic growth~(\ref{hyp:H}) and that there exists an eigenvalue gap as in Assumption~\eqref{hyp:gap}. Consider initial data 
$(\psi^\eps_0)_{\eps>0}$ that are wave packets as in~(\ref{initialdata}).
Then, for all $T>0$, there exists $C>0$ such that $\psi^\eps(t) = \U^\eps_H(t,t_0)\psi^\eps_0$
satisfies the estimate
$$\sup_{t\in [t_0,t_0+T]} \left\| \widehat {\Pi^\perp(t)}\psi^\eps(t) \right\| _{L^2(\R^d)}  +  \left\| \psi^\eps(t) - \widehat {\vec V(t)} v^\eps(t) \right\| _{L^2(\R^d)}\leq C \eps $$
where $v^\eps(t)= {\mathcal U}^\eps_h(t,t_0) v^\eps_0$
and $\vec V(t)$ is determined by Proposition~\ref{prop:eigenvector}.
Besides, if there exists $k\in\N$ such that  $(\psi^\eps_0)_{\eps>0}$ is a bounded family in the space $\Sigma_\eps^k$, then the convergence above holds in $\Sigma^k_\eps$. 
\end{theorem}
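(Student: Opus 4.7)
The plan is the classical WKB--Duhamel scheme for adiabatic decoupling. I would define the approximate solution
$$\phi^\eps(t) := \widehat{\vec V(t,t_0)}\, v^\eps(t),$$
where $v^\eps(t) := \U^\eps_h(t,t_0) v^\eps_0$ is the scalar propagation of the wave packet and $\vec V(t,t_0)$ is the parallel-transported eigenvector produced by Proposition~\ref{prop:eigenvector}. The goal is to show that, after one $\eps$-order correction, the residual $r^\eps(t) := (i\eps\partial_t - \widehat H(t))\phi^\eps(t)$ is $O_{L^2}(\eps^2)$ uniformly on $[t_0,t_0+T]$. Then Duhamel's formula
$$\psi^\eps(t) - \phi^\eps(t) = \frac{1}{i\eps}\int_{t_0}^t \U^\eps_H(t,s)\,r^\eps(s)\,ds$$
combined with the uniform $L^2$-boundedness of $\U^\eps_H$ recalled in Section~\ref{sec:functionspaces} yields the claimed $O(\eps)$ bound; the order-$\eps$ correction does not affect the statement since it is itself $O(\eps)$ in $L^2$.

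The key computation uses $i\eps\partial_t v^\eps = \widehat h v^\eps$, whence
$$r^\eps(t) = i\eps\,\widehat{\partial_t\vec V}\,v^\eps + \bigl(\widehat{\vec V}\,\widehat h - \widehat H\,\widehat{\vec V}\bigr)\,v^\eps.$$
Expanding by Moyal's formula $a\#b = ab + \frac{\eps}{2i}\{a,b\} + O(\eps^2)$ and using the pointwise identity $H\vec V = h\vec V$ kills the principal term. The $\eps$-order symbol $\rho(t,z)$ that remains involves $\partial_t\vec V$, $\{h,\vec V\}$ and $\{H,\vec V\}$; the parallel transport equation~\eqref{eq:eigenvector} is precisely designed, via the algebraic identities for $\Omega$, $K$, $\Theta$ stated in Lemma~\ref{lem:B}, so that $\Pi(t,z)\rho(t,z) \equiv 0$. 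Hence $\rho = \Pi^\perp\rho$ lives off-diagonal, and by the gap hypothesis~\eqref{hyp:gap} the operator $H-h$ is invertible on the range of $\Pi^\perp$. Setting $\vec W := -(H-h)^{-1}\Pi^\perp\rho$ (smooth thanks to the Riesz contour representation of $\Pi$) and replacing $\phi^\eps$ by $\phi^\eps + \eps\,\widehat{\vec W(t)}\,v^\eps(t)$ reduces the residual to $O_{L^2}(\eps^2)$.

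The first estimate then follows easily: since $\Pi^\perp(t,z)\vec V(t,t_0,z) \equiv 0$ pointwise, the Moyal expansion of $\widehat{\Pi^\perp(t)}\widehat{\vec V(t)}$ is an $O_{L^2}(\eps)$ operator, so $\widehat{\Pi^\perp(t)}\phi^\eps(t) = O_{L^2}(\eps)$; combined with the second estimate this gives $\|\widehat{\Pi^\perp(t)}\psi^\eps(t)\|_{L^2} = O(\eps)$. The strengthening to $\Sigma^k_\eps$ is obtained by commuting the weights $x^\alpha(\eps\partial_x)^\beta$ through the above argument, using the uniform $\Sigma^k_\eps$-boundedness of $\U^\eps_H$ and $\U^\eps_h$ together with the control on the $\Sigma^{k+3}_1$ norm of the initial profile $\varphi_0$ inherited from Proposition~\ref{wpevol1}.

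The main obstacle is the symbol-calculus verification that $\Pi\rho \equiv 0$: although purely algebraic, it requires the full structure $\Theta = i\Omega + i(K-K^*)$, and the $\Omega$-term is essential in the presence of a non-trivial geometric phase (it vanishes for the Schr\"odinger and Bloch Hamiltonians but not, for instance, for Dirac operators). A secondary technical concern is obtaining global-in-$z$ polynomial bounds on the derivatives of $\Pi(t,z)$ and $\vec V(t,t_0,z)$, needed to make the Moyal remainder estimates $L^2$-quantitative at infinity; this is where a uniform-in-$z$ version of the gap assumption, of the type~\eqref{hyp:gapinfinity} alluded to above, enters decisively.
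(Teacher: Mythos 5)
Your proposal is correct, and the key algebraic point you flag as the ``main obstacle'' does go through: writing the first-order symbol of the residual as $\rho=\Theta\vec V+\tfrac i2\{H-h,\vec V\}$ via the transport equation~\eqref{eq:eigenvector}, one checks $\Pi\rho=0$ precisely because $\Pi\{\Pi,\vec V\}=\Pi\{\Pi,\Pi\}\vec V$ and the $\Omega$-part of $\Theta$ cancels this diagonal contribution --- this is the content of Lemma~\ref{lem:B}. Your route is organized differently from the paper's, though. You correct the \emph{ansatz}: you add the off-diagonal amplitude $\eps\widehat{\vec W}v^\eps$ with $\vec W=(H-h)^{-1}\Pi^\perp\rho$ (watch the sign, which depends on how you normalize $\rho$), drive the residual to $O(\eps^2)$, and conclude by Duhamel. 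The paper instead corrects the \emph{projector}: it introduces the superadiabatic symbol $\Pi^\eps=\Pi+\eps\,\mathbb P$ of Definition~\ref{def:superadiabatic} (your $\vec W$ is essentially the off-diagonal part of $\mathbb P$ applied to $\vec V$), applies cut-offs transported by the flow, and shows that $\widehat{\tilde\chi^\delta}\,\widehat{\chi^\delta\Pi^\eps}\psi^\eps$ satisfies a decoupled scalar equation up to $O(\eps^2\delta^{-2})$; Theorem~\ref{thm:decoupling} is then the special case of a fixed, $\eps$-independent gap ($\delta\sim1$) of Proposition~\ref{prop:propagationt**}. Your version is more elementary and self-contained for the gapped statement; the paper's version is built to track the explicit dependence on the gap size $\delta$, which is what is actually needed later in the boundary-layer analysis near the crossing. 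Two minor remarks: the initial mismatch $\eps\widehat{\vec W(t_0)}v^\eps_0$ introduced by your corrector must be (and is) absorbed into the $O(\eps)$ conclusion; and under the uniform gap~\eqref{hyp:gap} together with the subquadratic bounds~\eqref{hyp:H}, the Riesz formula already gives at most polynomial growth of $\partial_z^\beta\Pi$, so the condition~\eqref{hyp:gapinfinity} is not needed here --- it only becomes relevant when the gap closes.
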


Theorem~\ref{thm:decoupling}  is obtained as an intermediate result in the proof of Proposition~\ref{prop:propagationt**}, see Section~\ref{sec:outside} below. There, we perform a refined analysis 
of the adiabatic approximation that explicitly accounts for the size of the eigenvalue gap. We note 
that the estimate of Theorem~\ref{thm:decoupling} is unchanged, when allowing for perturbations 
of the initial data that are of order $\eps$ in $L^2(\R^d)$ or $\Sigma^k_\eps$, respectively. We also note, that in general the operator $\widehat \Pi(t)$ is {\it not} a projector, but coincides at order $\eps$ with the superadiabatic operators constructed in~\cite{MS,Te}, which are projectors (see also Appendix~\ref{App:projectors}).

\begin{remark}\label{rem:generalisation1}
The result of Theorem~\ref{thm:decoupling} can be generalized by means of superadiabatic projectors, showing that $\psi^\eps(t)$ can be approximated at any order by an asymptotic sum of wave packets.  The precise time evolution of coherent states was studied in the adiabatic setting in~\cite{bi,MS,Rcimpa}. These results are obtained via an asymptotic quantum diagonalization, in the spirit of the construction of the superadiabatic projectors of~\cite{MS,Te}. 
\end{remark}

Theorem~\ref{thm:decoupling} allows a semi-classical description of the dynamics of an initial wave packet, that is associated with a  gapped eigenvalue. The building blocks are the scalar classical quantities introduced in section~\ref{sec:defclas} and the parallel transport of eigenvectors given in Proposition~\ref{prop:eigenvector}. This is stated in the next Corollary; our aim is to derive a similar description for systems presenting a codimension one crossing. 

\begin{corollary}[Adiabatic wave packet]
In the situation of Theorem~\ref{thm:decoupling}, for any $T>0$, $k\in\N$, $z_0\in\R^{2d}$, and $\varphi_0\in{\mathcal S}(\R^d,\C)$, there exists a 
constant $C>0$ such 
$$
\sup_{t\in[t_0,t_0+T]}\left\|{\mathcal U}^\eps_{H}(t,t_0)\, \widehat {\vec V_0}\,  {\mathcal {WP}}^\eps_{z_0}\varphi_0 -  
{\rm e}^{i S(t,t_0,z_0)/\eps } \,
\widehat {\vec V(t,t_0)}\, {\mathcal{WP}}^\eps_{\Phi_{h}^{t,t_0}(z_0)}\varphi^\eps(t) \right\|_{\Sigma^k_\eps} \le C\eps,
$$
where the profile $\varphi^\eps(t)$ is given by~(\ref{eq:profile}), and all the classical quantities are associated with the eigenvalue $h(t)$. 
\end{corollary}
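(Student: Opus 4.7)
The plan is to reduce the corollary to a straightforward combination of the scalar wave packet approximation (Proposition~\ref{wpevol1}) applied to the scalar propagator $\mathcal{U}^\eps_h(t,t_0)$ and the adiabatic decoupling estimate (Theorem~\ref{thm:decoupling}) which connects the matrix evolution to the scalar one via parallel transport. Since the initial datum factors as $\psi^\eps_0 = \widehat{\vec V_0}\,\wp^\eps_{z_0}\varphi_0$ with $v^\eps_0 = \wp^\eps_{z_0}\varphi_0$, Theorem~\ref{thm:decoupling} gives, for each $t\in[t_0,t_0+T]$,
\[
\bigl\| \mathcal U^\eps_H(t,t_0)\psi^\eps_0 \;-\; \widehat{\vec V(t,t_0)}\,\mathcal U^\eps_h(t,t_0)\wp^\eps_{z_0}\varphi_0 \bigr\|_{\Sigma^k_\eps} \;\leq\; C\eps ,
\]
with $\vec V(t,t_0)$ coming from Proposition~\ref{prop:eigenvector}. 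Proposition~\ref{wpevol1} then supplies, with the same order $\eps$ error in $\Sigma^k_\eps$,
\[
\mathcal U^\eps_h(t,t_0)\wp^\eps_{z_0}\varphi_0 \;=\; \mathrm{e}^{iS(t,t_0,z_0)/\eps}\,\wp^\eps_{\Phi_h^{t,t_0}(z_0)}\varphi^\eps(t) \;+\; \mathcal R^\eps(t) ,
\]
with $\|\mathcal R^\eps(t)\|_{\Sigma^k_\eps}\leq C\eps$, and $\varphi^\eps(t)$ given by the metaplectic formula \eqref{eq:profile}.

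The second step is to apply $\widehat{\vec V(t,t_0)}$ to the scalar approximation. Since the scalar phase $\mathrm{e}^{iS(t,t_0,z_0)/\eps}$ commutes with the Weyl quantization of the matrix symbol, it may be pulled out. The triangle inequality then yields the desired estimate, provided I control $\widehat{\vec V(t,t_0)}$ as an operator on $\Sigma^k_\eps$.

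The main obstacle, and the only non-bookkeeping step, is precisely this uniform boundedness of $\widehat{\vec V(t,t_0)}$ on $\Sigma^k_\eps$ in $t$ and $\eps$. Two issues must be addressed: (a) Proposition~\ref{prop:eigenvector} only produces $\vec V(t,t_0,z)$ on the flowed set $\Phi_h^{t,t_0}(U)$ rather than on all of $\R^{2d}$; and (b) even on this set the symbol and its derivatives must have controlled (at worst polynomial) growth in $z$ for the Calder\'on--Vaillancourt estimates underlying $\Sigma^k_\eps$-boundedness to apply. For (a), I would extend $\vec V(t,t_0,\cdot)$ smoothly and boundedly to all of $\R^{2d}$ using a cut-off supported near $\Phi_h^{t,t_0}(z_0)$; the choice of extension is irrelevant at leading order because the wave packet $\wp^\eps_{\Phi_h^{t,t_0}(z_0)}\varphi^\eps(t)$ is $O(\eps^\infty)$ outside any fixed neighborhood of its center (in the $\Sigma^k_\eps$-sense, using that $\varphi^\eps(t)$ remains Schwartz with uniform semi-norms, thanks to the explicit metaplectic description). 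For (b), the subquadratic growth of $h$ together with \eqref{eq:eigenvector} (whose coefficient $\Theta$ is bounded with bounded derivatives under the gap assumption) gives uniform bounds on the derivatives of $\vec V(t,t_0,\cdot)$ on compact sets, which suffices once the cut-off from (a) is in place.

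Finally, combining the two estimates through the triangle inequality produces
\[
\Bigl\|\mathcal U^\eps_H(t,t_0)\psi^\eps_0 - \mathrm{e}^{iS(t,t_0,z_0)/\eps}\,\widehat{\vec V(t,t_0)}\,\wp^\eps_{\Phi_h^{t,t_0}(z_0)}\varphi^\eps(t)\Bigr\|_{\Sigma^k_\eps} \leq C\eps ,
\]
uniformly for $t\in[t_0,t_0+T]$. I would emphasize that the constant depends on $T$, $k$, $z_0$, the $\Sigma^{k+3}_1$-norm of $\varphi_0$, and bounds on derivatives of the flow $\Phi_h^{t,t_0}$ and of $\vec V(t,t_0,\cdot)$ on a neighborhood of the classical trajectory, but is independent of $\eps$.
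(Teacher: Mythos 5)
Your proposal is correct and follows exactly the route the paper intends: the corollary is obtained by composing Theorem~\ref{thm:decoupling} (adiabatic decoupling applied to $\psi^\eps_0=\widehat{\vec V_0}\,\wp^\eps_{z_0}\varphi_0$) with Proposition~\ref{wpevol1} for the scalar propagator ${\mathcal U}^\eps_h$, plus the uniform $\Sigma^k_\eps$-boundedness of $\widehat{\vec V(t,t_0)}$ near the classical trajectory. The localization and growth issues you rightly flag are precisely those the paper disposes of via Remark~\ref{rem:apendix} (concentration of wave packets on the $\sqrt\eps$ scale) and Remark~\ref{rem:poly_eigenvec} (polynomial bounds on the transported eigenvector and its derivatives).
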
 

We close this section devoted to gapped systems by formulating another semi-classical consequence of adiabatic theory using the {\it Herman--Kluk propagator}. This approximate propagator has first been proposed by M. Herman and  E. Kluk in \cite{HK} for scalar Schr\"odinger equations and later used as a numerical method for quantum dynamics in the semi-classical regime, see for 
example \cite{KluHD86} or more recently \cite{LS,Grossmann} with references therein. The rigorous 
mathematical analysis of the Herman--Kluk propagator is due to \cite{R,RS1}. The starting point of this 
approximation is the wave packet inversion formula 
$$
\psi(x)= (2\pi\eps)^{-d} \int_{z\in\R^{2d}}\langle g^\eps_z,\psi\rangle g^\eps_z (x) dz
$$
that allows to write any square integrable function $\psi\in L^2(\R^d)$ as a continuous superposition of 
Gaussian wave packets of unit width, 
\[
g^\eps_z(x) \ =\  \mathcal{WP}_z^\eps(g^{i\1})(x) \ =\  (\pi\eps)^{-d/4} 
\e^{-|x-q|^2/(2\eps) + i p\cdot(x-q)/\eps}.
\] 
The semi-classical description of unitary quantum dynamics within the framework of Gaussians of fixed unit width 
becomes possible due to a reweighting factor, the so-called {\it Herman--Kluk prefactor},
\[
a_h(t,t_0,z) = 
2^{-d/2}   \ {\rm det}^{1/2} \left( A(t,t_0,z)+D(t,t_0,z)+i(C(t,t_0,z)-B(t,t_0,z))     \right),
\]
which is solely determined by the blocks of the Jacobian matrix of the classical flow map. The resulting propagator 
\[
\psi\,\mapsto\, {\mathcal I}_h^\eps(t,t_0)\psi \,=\, (2\pi\eps)^{-d} \int_{\R^{2d}} 
\langle g^\eps_z,\psi\rangle  a_h(t,t_0,z) {\rm e}^{i S(t,t_0,z)/\eps} g^\eps_{\Phi^{t,t_0}_h(z)} dz
\]
provides an order $\eps$ approximation to the scalar unitary propagator ${\mathcal U}^\eps_h(t,t_0)$ in operator norm. Combining \cite[Proposition~2 and Theorem~2]{RS1} or \cite[Theorem~1.2]{R} with our previous results we obtain a Herman--Kluk approximation for gapped systems.
 
\begin{corollary}[Adiabatic Herman--Kluk approximation]   \label{cor:hk}
In the situation of Theorem~\ref{thm:decoupling}, for all $T>0$ there exists a constant $C=C(T)>0$ such that
$$
\sup_{t\in[t_0,t_0+T]}\left\| {\mathcal U}_H^\eps(t,t_0)\psi^\eps_0 - {\mathcal I}_H^\eps(t,t_0)\psi^\eps_0 
\right\|_{L^2(\R^d)} \le C\eps,
$$
where the vector-valued Herman--Kluk propagator is defined by 
\[
{\mathcal I}_H^\eps(t,t_0)\psi^\eps_0 = (2\pi\eps)^{-d} \int_{\R^{2d}} 
\langle g^\eps_z,v^\eps_0\rangle  \vec A(t,t_0,z) {\rm e}^{i S(t,t_0,z)/\eps} g^\eps_{\Phi^{t,t_0}_h(z)} dz.
\]
The prefactor $\vec A(t,t_0,z)$ is given by 
$\vec A(t,t_0,z) = \vec V(t,t_0,z) a_h(t,t_0,z)$,
where $a_h(t,t_0,z)$ is the Herman--Kluk prefactor associated with the eigenvalue $h(t)$. 
\end{corollary}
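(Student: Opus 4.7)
The plan is to deduce Corollary~\ref{cor:hk} by chaining the adiabatic decoupling of Theorem~\ref{thm:decoupling} with the scalar Herman--Kluk approximation of~\cite{R,RS1}, and then recognising the resulting composition as the vector-valued Herman--Kluk formula. Setting $v^\eps(t)={\mathcal U}^\eps_h(t,t_0)v^\eps_0$, the first ingredient gives $\|{\mathcal U}^\eps_H(t,t_0)\psi^\eps_0-\widehat{\vec V(t)}v^\eps(t)\|_{L^2}\le C\eps$, and the second gives $\|v^\eps(t)-{\mathcal I}^\eps_h(t,t_0)v^\eps_0\|_{L^2}\le C\eps$. Composing with the $L^2$-bounded operator $\widehat{\vec V(t)}$ (which is $\eps$-uniformly bounded since $\vec V(t,\cdot)$ is a smooth matrix symbol of bounded derivatives on the flow tube) then yields
$$\|{\mathcal U}^\eps_H(t,t_0)\psi^\eps_0-\widehat{\vec V(t)}{\mathcal I}^\eps_h(t,t_0)v^\eps_0\|_{L^2}\le C\eps.$$
The remaining task is thus to show that $\widehat{\vec V(t)}{\mathcal I}^\eps_h(t,t_0)v^\eps_0$ and ${\mathcal I}^\eps_H(t,t_0)\psi^\eps_0$ agree up to $O(\eps)$ in $L^2$.

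For this reassembly I would commute $\widehat{\vec V(t)}$ past the Herman--Kluk integral, reducing to a pointwise-in-$z$ estimate of $\widehat{\vec V(t)}g^\eps_{\Phi^{t,t_0}_h(z)}-\vec V(t,t_0,z)g^\eps_{\Phi^{t,t_0}_h(z)}$ in a weighted sense. Standard wave packet symbol calculus (Lemma~\ref{lem:coherent} combined with Taylor expansion of $\vec V(t,\cdot)$ at the wave packet's centre) gives
$$\widehat{\vec V(t)}g^\eps_w=\vec V(t,w)g^\eps_w+\sqrt\eps\,\wp^\eps_w\bigl[\op^w_1\!\bigl(\partial_z\vec V(t,w)\cdot\zeta\bigr)g^{i\mathbb I}\bigr]+O_{L^2}(\eps).$$
At $w=\Phi^{t,t_0}_h(z)$ the parallel-transport construction of Proposition~\ref{prop:eigenvector} identifies $\vec V(t,\Phi^{t,t_0}_h(z))$ with $\vec V(t,t_0,z)$, so the leading terms already match the prefactor $\vec A(t,t_0,z)$ of ${\mathcal I}^\eps_H$.

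The hard part will be to show that the intermediate $\sqrt\eps$-correction integrates to only $O(\eps)$ in $L^2$ against the Herman--Kluk amplitude $\langle g^\eps_z,v^\eps_0\rangle a_h(t,t_0,z)\,{\rm e}^{iS(t,t_0,z)/\eps}$, exploiting both the $\sqrt\eps$-concentration of this amplitude around $z_0$ and the odd-polynomial structure of $\op^w_1(\partial_z\vec V\cdot\zeta)g^{i\mathbb I}$. A more robust alternative, should this oscillatory-integral cancellation prove delicate, is to abandon Step~2 and instead verify directly that the vector-valued ansatz
$$u^\eps(t,z):=\vec V(t,t_0,z)\,a_h(t,t_0,z)\,{\rm e}^{iS(t,t_0,z)/\eps}g^\eps_{\Phi^{t,t_0}_h(z)}$$
satisfies $(i\eps\partial_t-\widehat H(t))u^\eps(t,z)=O_{L^2}(\eps^2)$ in a suitable weighted-in-$z$ norm, and then conclude by Duhamel's formula exactly as in the scalar proofs of~\cite{R,RS1}. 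In this second route, the parallel-transport equation $\partial_t\vec V+\{h,\vec V\}=-i\Theta\vec V$ from Proposition~\ref{prop:eigenvector} is tailor-made to cancel the leading matrix residual coming from quantizing $H-h\,\mathbb I$ against the Gaussian centred on the classical trajectory, while the scalar $O(\eps^2)$ residual bound handles the remainder.
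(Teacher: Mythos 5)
Your proposal follows exactly the route the paper intends: it proves this corollary in one line by ``combining \cite[Proposition~2 and Theorem~2]{RS1} or \cite[Theorem~1.2]{R} with our previous results,'' i.e.\ chaining Theorem~\ref{thm:decoupling} with the scalar Herman--Kluk estimate and then reassembling $\widehat{\vec V(t)}\,{\mathcal I}^\eps_h(t,t_0)v^\eps_0$ into ${\mathcal I}^\eps_H(t,t_0)\psi^\eps_0$. You correctly isolate the only nontrivial point the paper leaves implicit --- that the $\sqrt\eps$ discrepancy between $\widehat{\vec V(t)}g^\eps_{\Phi^{t,t_0}_h(z)}$ and $\vec V(t,\Phi^{t,t_0}_h(z))g^\eps_{\Phi^{t,t_0}_h(z)}$ has an odd (linear) polynomial profile and therefore contributes only $O(\eps)$ after integration against the Herman--Kluk amplitude --- and both of your proposed ways of closing it are the standard mechanisms from \cite{R,RS1}.
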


\medskip 
Theorem~\ref{thm:decoupling} formulates adiabatic decoupling for a single eigenvalue that is uniformly 
separated from the remainder of the spectrum. 
As it is well-known, adiabatic theory also extends to the situation where a subset  of eigenvalues is isolated from the remainder of the spectrum. For this reason, in the next section, we reduce our analysis to the  case of matrices with two eigenvalues that  coincide on a hypersurface~$\Upsilon$ of codimension one and differ away from it. We explicitly describe the dynamics of wave packets through this type of crossings, which is our main result.

\subsection{Main result: propagation of wave packets through codimension one crossings} 

We write the Hamiltonian matrix $H(t,z)$ as  
\begin{equation}\label{def:H22}
H(t,z)= v(t,z) \1_{\R^N} + H_0(t,z),\;\; v(t,z)=  {1\over N}{\rm tr} H(t,z),
\end{equation} 
where $v(t,z)$ is a real number and $H_0(t,z)$ a trace-free self-adjoint $N\times N$ matrix.
We assume that $H(t,z)$  has two smooth eigenvalues that cross on a hypersurface~$\Upsilon$. 
Such  a situation is called a {\it codimension one crossing} (see Hagedorn's classification~\cite{Hag94} for example). Let us formulate our assumptions on the crossing set more precisely.

\begin{assumption}[Codimension one crossing]\label{hyp:codim1} 
Let $H:\R^{2d+1}\to\C^{N\times N}$ be a smooth function with values in the set of self-adjoint $N\times N$ matrices that is of subquadratic growth~\eqref{hyp:H}. We assume:
\begin{enumerate}\setlength{\itemsep}{0.5ex}
\item[a)]
The matrix $H(t,z)$ has two smooth eigenvalues $h_1(t,z)$ and $h_2(t,z)$ that are of subquadratic growth~\eqref{hyp:H}.
\item[b)]
These eigenvalues cross on  a hypersurface~$\Upsilon$ of 
$\R^{2d+1}$ and differ outside of $\Upsilon$. In particular, for any $(t^\flat,z^\flat)\in\Upsilon$ there exists a neighbourhood~$\Omega$ of~$(t^\flat,z^\flat)$ and a smooth scalar function $(t,z)\mapsto f(t,z)$ defined on~$\Omega$ such that $f(t,z)=0$ is a local equation of~$\Upsilon$ with $d_{t,z}f\neq 0$ on~$\Omega$.
\item[c)]
The scalar function $(t,z)\mapsto v(t,z)$ defined by the decomposition \eqref{def:H22} satisfies  
%and~\eqref{def:f} and
%Besides, we assume 
\begin{equation}\label{hyp:transversality}
\partial_t f + \{v ,f\} \not=0\ \text{on}\ \Omega.
\end{equation}
\item[d)]
The crossing is non-degenerate in the sense, that the matrix $H_0(t,z)$ defined by the decomposition \eqref{def:H22} satisfies 
\[
H_0(t,z) = f(t,z) \tilde H_0(t,z)\ \text{on}\ \Omega
\] 
for some smooth matrix-valued map $(t,z) \mapsto \tilde H_0(t,z)$ with 
%$\tilde H_0(t,z)\neq 0$ 
$\tilde H_0(t,z)$ invertible on $\Omega$. 
The spectrum of the matrix $\tilde H_0(t,z)$ consists of two distinct eigenvalues of constant multiplicity which do not cross on 
$\Omega$.
\item[e)]
The eigenvalues satisfy a polynomial gap condition at infinity, in the sense that there exist constants $c_0,n_0,r_0>0$ such that
\begin{equation}\label{hyp:gapinfinity}
|h_1(t,z)-h_2(t,z)| \ge c_0 \langle z\rangle^{-n_0}\ \text{for all}\ (t,z)\ \text{with}\ |z|\ge r_0,
\end{equation}
where we denote $\langle z\rangle = (1+|z|^2)^{1/2}$.
\end{enumerate}
\end{assumption}

In the above setting, the trace-free smooth matrix $\tilde H_0(t,z)$ has non-crossing and thus smooth eigenvalues. 
The eigenprojectors of $\tilde H_0(t,z)$ are smooth and are also those of $H(t,z)$. 
%in other words, $\tilde H_0(t,z)$ is trace-free and invertible on $\Upsilon$. 
Note that one can then modify the function $f$ in $\Omega$ so that the functions 
 \begin{equation}\label{def:f}
 h_j(t,z)= v(t,z) - (-1)^j f(t,z) ,\quad j\in\{1,2\},
 \end{equation}
 are the two smooth eigenvalues of the matrix $H(t,z)$, with smooth associated eigenprojectors 
 $\Pi_1(t,z)$ and $\Pi_2(t,z)$.
 We shall choose $f$ in that manner throughout the paper.

\begin{example}
Take $N=2$, $v,f\in{\mathcal C}^\infty(\R^{2d+1}, \R)$ and  $u\in{\mathcal C}^\infty(\R^{2d+1}, \R^3)$ with  
$|u(t,z)| = 1$ for all $(t,z)$. Consider the Hamiltonian 
$$H(t,z)=v(t,z){\rm Id} + f(t,z) \begin{pmatrix} u_1(t,z) & u_2 (t,z) +iu_3(t,z) \\u_2(t,z) -iu_3(t,z) &-u_1 (t,z) \end{pmatrix}.$$
The smooth eigenvalues of~$H$, 
$h_1 = v+f$ and  $h_2 = v-f$,
cross on the set $\Upsilon = \{f=0\}$, and $H$ satisfies Assumption~\ref{hyp:codim1} as soon as the conditions~\eqref{hyp:transversality}  and \eqref{hyp:gapinfinity} hold. 
\end{example}

Note that the condition~(\ref{hyp:transversality}) implies the transversality of the classical trajectories to the crossing set~$\Upsilon$. The gap condition at infinity~\eqref{hyp:gapinfinity} ensures, that the derivatives 
of the eigenprojectors $\Pi_j(t)$, $j=1,2$, grow at most polynomially, in the sense that for all $\beta\in\N_0^{2d}$ there exists a constant $C_\beta>0$ such that
\begin{equation}\label{bound:projector}
\|\partial_z^{\beta} \Pi_j(t,z)\| \le C_\beta \langle z\rangle^{|\beta|(1+n_0)}\ \text{for all}\ (t,z)\ \text{with}\ |z|\ge r_0,
\end{equation}
see \cite[Lemma~B.2]{CF11} for a proof of this estimate. 

\medskip
We associate with each eigenvalue $h_j$ the classical quantities introduced in section~\ref{sec:defclas}, that we index by~$j$: $\Phi_j^{t,t_0}$, $S_j(t,t_0)$, $F_j(t,t_0)$, etc.
We consider initial data at time~$t=t_0$ as in~\eqref{initialdata}, where the coherent state is associated with the first eigenvalue~$h_1$ and centered in a phase space point $z_0$ such that $(t_0,z_0)\notin \Upsilon$, while $z\mapsto \vec V_0(z)$ is a smooth map with $\|\vec V_0(z)\|=1$ for all $z$.
We assume that the Hamiltonian trajectory  $z_1(t,t_0,z_0)= \Phi^{t,t_0}_1(z_0)$ reaches~$\Upsilon$  at time $t=t^\flat$ and point~$z=z^\flat$ where~\eqref{hyp:transversality} holds. Therefore, $f(t,z)=0$ is a local equation 
of~$\Upsilon$ in a neighborhood $\Omega$ of $(t^\flat,z^\flat)$, and the assumption~\ref{hyp:codim1} implies  
\[\frac{d}{dt}f(t, z_1(t,t_0))\neq 0\]
close to $(t^\flat, z^\flat)$, and guarantees that the trajectory~$z_1(t,t_0,z_0)$ passes through~$\Upsilon$. 
The same holds for trajectories $\Phi^{t,t_0}_1 (z)$ starting from~$z$ close enough to $z_0$.
 
 \medskip 
 
We associate with  $\vec V_0(z)$ the time-dependent eigenvector $(\vec V_1(t,z))_{t\geq t_0}$  constructed as in  Proposition~\ref{prop:eigenvector} for the eigenvalue $h_1(t,z)$ with initial data~$\vec V_0(z)$ at time $t_0$. We also consider the   time-dependent eigenvector  $(\vec V_2(t,z))_{t\geq t^\flat}$ constructed  for $t\ge t^\flat$ as in  Proposition~\ref{prop:eigenvector} for the eigenvalue $h_2(t,z)$ and with initial data at time $t^\flat$ satisfying 
\begin{equation}\label{def:V2}
 \vec V_2(t^\flat, z)= -\gamma (t^\flat,z)^{-1} {\Pi_2(\partial_t \Pi_2 +\{v,\Pi_2\} )\vec V_1}
 %{\|\Pi_2(\partial_t \Pi_2 +\{v,\Pi_2\}) \vec V_1\|_{\C^N}} 
 (t^\flat ,z)
\end{equation}
\[{\rm with }\;\;
\gamma(t^\flat,z) =\|\left(\partial_t \Pi_2+\{v,\Pi_2\}\right)\vec V_1(t^\flat,z) \| _{\C^N} .
\]
Note that the vector  $\vec V_2(t^\flat,z)$ is in the range of $\Pi_2(t^\flat,z)$.
We next introduce a family of  transformations, which describes the non-adiabatic effects for 
a wave packet that passes the crossing. For $(\mu,\alpha,\beta)\in\R\times\R^{2d}$ and $\varphi\in\Sch(\R^d)$, we set 
  \beq\label{transf2}
{\mathcal T}_{\mu,\alpha, \beta}\varphi(y) =
 \left(\int_{-\infty}^{+\infty}{\rm e}^{i\mu s^2}{\rm e}^{is(\beta\cdot y-\alpha\cdot D_y)}ds\right)\varphi(y).
\eeq
By the Baker-Campbell-Hausdorff formula, we have 
$${\rm e}^{is\beta\cdot y}{\rm e}^{-is\alpha\cdot D_y}= {\rm e}^{is\beta\cdot y-is\alpha\cdot D_y+is^2\alpha\cdot\beta/2},$$ 
and we deduce the equivalent representation
\beq\label{transf1}
 {\mathcal T}_{\mu,\alpha, \beta}\varphi(y) =
  \int_{-\infty}^{+\infty}{\rm e}^{i(\mu-\alpha\cdot\beta/2)s^2} {\rm e}^{is\beta\cdot y}\varphi(y-s\alpha)ds .
 \eeq
 We prove in Proposition~\ref{prop:T} below that this operator maps $\mathcal S(\R^d)$ into itself if and only if $\mu\not=0$. Moreover, for $\mu\not=0$, it is a metaplectic transformation of the Hilbert space
 $L^2(\R^d)$, multiplied by   a complex number. 
In particular,  for any Gaussian function
  $g^\Gamma$, the function   
  $ {\mathcal T}_{\mu,\alpha, \beta}g^\Gamma $ is a Gaussian:
  \[ 
 {\mathcal T}_{\mu,\alpha, \beta}\,g^\Gamma = c_{\mu,\alpha,\beta,\Gamma}\, g^{\Gamma_{\mu, \alpha,\beta,\Gamma}},
 \]
   where $\Gamma_{\mu, \alpha,\beta,\Gamma}\in\mathfrak S^+(d)$ and $c_{\mu,\alpha,\beta,\Gamma}\in \C$ are given in Proposition~\ref{prop:T}.

\medskip 
Combining the parallel transport for the eigenvector and the metaplectic transformation for the non-adiabatic transitions, we obtain the following result. 

\begin{theorem}[Propagation through a codimension one crossing]\label{theo:WPcodim1}
Let Assumption~\ref{hyp:codim1} on the Hamiltonian matrix $H(t)$ hold, and assume that the initial data 
$(\psi^\eps_0)_{\eps>0}$ are wave packets as in~(\ref{initialdata}).
  Let $T>0$ be such that the interval $[t_0,t^\flat] $ is strictly included in the interval $[t_0,t_0+T]$. Then, for all $k\in\N$ there exists a constant $C>0$ such that for all $t\in  [t_0,t^\flat)\cup(t^\flat,t_0+T]$ and for all $\eps\leq [t-t^\flat|^{9/2}$,
$$\left\| \psi^\eps(t) - \widehat {\vec V}_1(t) v^\eps_1(t) -\sqrt\eps {\bf 1}_{t>t^\flat} \widehat {\vec V}_2(t) v^\eps_2(t) \right\|_{\Sigma^k_\eps}\leq C \,\eps^{m},$$
with an exponent $m\ge 5/9$. The components of the approximate solution are 
\[
v^\eps_1(t) = \U_{h_1}^\eps(t,t_0) v^\eps_0\quad \text{and}\quad v^\eps_2(t) =\U_{h_2}^\eps(t,t^\flat)v^\eps_2(t^\flat)
\]
with 
\begin{equation}\label{eq:v2}
v^\eps_2(t^\flat)= \gamma^\flat {\rm e}^{iS^\flat/\eps}\wp^\eps_{z^\flat}{\mathcal T}^\flat
\varphi_1(t^\flat),
\end{equation}
where $\varphi_1(t) = {\mathcal M}[F_{h_1}(t,t_0,z_0)]\varphi_0$
is the leading order profile of the coherent state $v^\eps_1(t)$ given by Proposition~\ref{wpevol1}, and 
\begin{equation} \label{def:alpha} 
\gamma^\flat = \gamma(t^\flat,z^\flat) = \|\left(\{v,\Pi_2\}+\partial_t\Pi_2\right)\vec V_1(t^\flat,z^\flat) \| _{\C^N} .
\end{equation}
The transition operator
\beq\label{def:T_sharp}
\mathcal T^\flat= \mathcal T_{ \mu^\flat, \alpha^\flat,\beta^\flat}
\eeq
is defined by the parameters
\begin{equation}\label{def:lambda}
\mu^\flat = \tfrac{1}{2}\left( 
\partial_t f+\{v,f\}\right)(t^\flat, z^\flat)\;\; \text{and}\;\;
%\label{def:gamma}
(\alpha^\flat,\beta^\flat) = Jd_zf(t^\flat, z^\flat). 
\end{equation}
The constant $C = C(T,k,z_0,\varphi_0)>0$ is $\eps$-independent but depends on the 
Hamiltonian $H(t,z)$, the final time $T$, and on the initial wave packet's center~$z_0$ and profile~$\varphi_0$.
\end{theorem}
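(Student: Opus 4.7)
The plan is to decompose $[t_0,t_0+T]$ into three pieces: an outer interval $[t_0,t^\flat-T_\eps]$, a crossing layer $[t^\flat-T_\eps,t^\flat+T_\eps]$, and an outer interval $[t^\flat+T_\eps,t_0+T]$, with buffer width $T_\eps=\eps^{2/9}$ (compatible with the hypothesis $\eps\le|t-t^\flat|^{9/2}$). On the outer intervals, the gap $|h_1-h_2|=2|f|$ evaluated along the relevant trajectories stays of order at least $T_\eps$, so a quantitative version of Theorem~\ref{thm:decoupling} tracking the adiabatic error as a function of the size of the gap yields an error of order $\eps/T_\eps^\nu$ for some $\nu$ governed by the order of the superadiabatic construction and by the polynomial growth~\eqref{bound:projector} of derivatives of $\Pi_j$. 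Combined with Proposition~\ref{wpevol1} applied to the scalar propagator associated with each eigenvalue, this gives the wave packet structure on both outer intervals with the parallel-transported eigenvectors supplied by Proposition~\ref{prop:eigenvector}: before the crossing only the $h_1$ component is active; after the crossing the $h_1$ and $h_2$ components evolve independently.

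The main obstacle is the analysis inside the crossing layer, where adiabatic decoupling fails and the transition operator~\eqref{def:T_sharp} must be derived from first principles. My plan is to write the solution in the smooth eigenbasis $\vec V_1,\vec V_2$, which is globally defined across $\Upsilon$ thanks to Assumption~\ref{hyp:codim1}(d), yielding a two-component scalar system whose leading non-adiabatic coupling has size $\eps$ in the equation. Starting from the approximately vanishing $h_2$-data at $t^\flat-T_\eps$ (guaranteed by the outer adiabatic analysis), I would apply Duhamel's formula, substitute the wave packet ansatz for the $h_1$-component, and rescale time via $s=(t-t^\flat)/\sqrt\eps$. The resulting expression is an oscillatory integral in $s$ whose phase $\eps^{-1}\int (h_1-h_2)(\sigma,z_1(\sigma))\,d\sigma$ is stationary at $s=0$; transversality~\eqref{hyp:transversality} makes this stationary point non-degenerate with second derivative $2\mu^\flat=(\partial_t f+\{v,f\})(t^\flat,z^\flat)$, producing the $\sqrt\eps$ prefactor and the Gaussian weight $\exp(i\mu^\flat s^2)$. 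The off-diagonal coupling symbol is proportional to the projected Poisson bracket $\Pi_2(\partial_t\Pi_2+\{v,\Pi_2\})\vec V_1$, which involves the direction $Jd_zf(t^\flat,z^\flat)=(\alpha^\flat,\beta^\flat)$; its action on the wave packet at rescaled time $s$ amounts, after rescaling, to the translation--modulation $\exp(is(\beta^\flat y-\alpha^\flat D_y))$ on the unit-scale profile $\varphi_1(t^\flat)$. Integrating in $s$ recovers precisely the oscillatory representation~\eqref{transf2} of $\mathcal T^\flat$, while the norm $\gamma^\flat$ of~\eqref{def:V2} and the classical action $S^\flat$ accumulated along the $h_1$-trajectory up to the crossing combine into the full formula~\eqref{eq:v2}.

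After the crossing, both components evolve adiabatically in their respective eigenspaces, so Theorem~\ref{thm:decoupling} and Proposition~\ref{wpevol1} apply separately to each summand with~\eqref{eq:v2} serving as initial datum at $t^\flat$ for the $h_2$-component. The final step is to balance the errors: the outer contribution scales like $\eps/T_\eps^\nu$, while the crossing layer contribution scales like $T_\eps^\mu$, the latter combining the Taylor remainder of $H$ near $(t^\flat,z^\flat)$, the boundary effects of the stationary phase argument on the rescaled interval $[-T_\eps/\sqrt\eps,\,T_\eps/\sqrt\eps]$, and the $O(\sqrt\eps)$ corrections of Proposition~\ref{wpevol1} accumulated inside the layer. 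For appropriate values of $\mu$ and $\nu$ the choice $T_\eps=\eps^{2/9}$ equilibrates these contributions and produces the announced exponent $m\ge 5/9$.
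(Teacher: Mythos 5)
Your overall strategy coincides with the paper's: an outer adiabatic analysis with error $\eps\delta^{-2}$ (Proposition~\ref{prop:propagationt**}, where the exponent $\nu=2$ comes from the first superadiabatic corrector, whose derivatives are singular like inverse powers of the gap near $\Upsilon$), an inner analysis by Duhamel combined with the rescaling $s=\sigma/\sqrt\eps$ and a non-degenerate stationary phase at $s=0$ with second derivative governed by $\mu^\flat$, and a final balancing at $\delta=\eps^{2/9}$. The derivation of $\mathcal T^\flat$ you sketch is essentially Lemmas~\ref{lem:step1}--\ref{lem:transap}. One small misattribution: the translation--modulation $\e^{is(\beta^\flat\cdot y-\alpha^\flat\cdot D_y)}$ does not come from the coupling symbol, which only contributes the amplitude $\gamma^\flat$ and the vector $\vec V_2$; it comes from the relative drift of the composed flows $\Phi_2^{t^\flat,t^\flat+\sigma}\circ\Phi_1^{t^\flat+\sigma,t^\flat}$ away from $z^\flat$, at rate $J\partial_z(h_1-h_2)$, re-expressed as a unit-scale Weyl translation after conjugation by $\wp^\eps_{z^\flat}$.

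There is, however, a genuine gap in your error accounting, and it cannot be fixed merely by choosing ``appropriate values of $\mu$ and $\nu$''. Inside the layer, the Duhamel remainders are time integrals of the terms $f^\eps_j$ of \eqref{def:feps}, which contain $\widehat\Theta_j w^\eps_j$ and the cross-coupling $\widehat{B_j\Pi_{3-j}}\,w^\eps_{3-j}$, i.e.\ the unknown errors themselves. A single-pass Gronwall bound over the layer only gives $w^\eps_j=O(\eps\delta^{-2})+O(\delta)$, hence a layer remainder of size $\delta\,(\eps\delta^{-2}+\delta)=\eps\delta^{-1}+\delta^2$ after the explicit transition term has been extracted; balancing $\eps\delta^{-2}$ against $\delta^2$ yields $\delta=\eps^{1/4}$ and a total error $O(\eps^{1/2})$, which is exactly the size of the transition term you are exhibiting, so the conclusion would be vacuous. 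The paper's resolution is a two-step bootstrap that your proposal omits: first run the combined outer/inner argument with $\delta=\eps^{1/3}$ to obtain the global a priori estimate $\sup_t\|\psi^\eps(t)-\widehat{\vec V_1}(t)v^\eps_1(t)\|_{\Sigma^k_\eps}=O(\eps^{1/3})$ (Lemma~\ref{lem:apriori}), which makes $f^\eps_j=O(\eps^{1/3})$ uniformly; then rerun the layer analysis with a new, independent $\delta$, so that the layer remainder improves to $O(\eps^{1/3}\delta)$ and the balance $\eps\delta^{-2}=\eps^{1/3}\delta$ at $\delta=\eps^{2/9}$ produces $\eps^{5/9}$. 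You need to incorporate this intermediate a priori estimate (or an equivalent self-improvement step) to reach any exponent $m>1/2$.
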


Note that by Assumption~\ref{hyp:codim1}, $\mu^\flat\not=0$,  
which guarantees that ${\mathcal T}^\flat
\varphi_1(t^\flat)$ is Schwartz class. Besides, if the Hamiltonian is time-independent, then Assumption~\ref{hyp:codim1} also implies that $(\alpha^\flat,\beta^\flat)\not=(0,0)$.
The coefficient $\gamma^\flat$ quantitatively describes the distortion of the projector~$\Pi_1$ during its evolution along the flow generated by $h_1(t)$.
%It is a quantitative information about the torsion of the eigenmodes.  
In particular, we have 
$$\gamma^\flat =\|\left(\{v,\Pi_2\}+\partial_t\Pi_2\right)\vec V_1(t^\flat,z^\flat) \| _{\C^N} =\|\left(\{v,\Pi_1\}+\partial_t\Pi_1\right)\vec V_1(t^\flat,z^\flat)\|_{\C^N}.$$
Moreover, if the matrix $H$ is diagonal (or diagonalizes in a fixed orthonormal basis that is $(t,z)$-independent), then  $\gamma^\flat=0$: the equations are decoupled (or can be decoupled), and one can then apply the result for a system of two independent equations with a scalar Hamiltonian and, of course, there is no interaction between the modes.

\medskip 

The proof uses two types of arguments, one of the them applying away from the crossing set $\Upsilon$, and the other one in a boundary layer of~$\Upsilon$. The boundary layer is taken of size $\delta>0$, and we have to balance the two estimates: an error of order $\eps \delta^{-2}$ which comes from the adiabatic propagation of wave packets outside the boundary layer, and an additional error of order $\delta\eps^{1/3}$ generated by the passage through the boundary. The choice of $\delta=\eps^{2/9}$ optimizes the combined estimate and 
yields convergence of order $\eps^m$ with $m\ge 5/9$. We also want to emphasize that the method of proof we propose here allows to systematically avoid the  impressive computations, which appear in~\cite{Hag94} pages~65 to~72, and are also present in~\cite{WW} via the reference [46] to which the authors refer therein. 
  
\medskip 
The wave packet that makes the transition to the other eigenspace can be described even more explicitly for the special case that the initial wave packet is a Gaussian state. The following corollary is proved in Proposition~\ref{prop:T}.

\begin{corollary}[Transitions for Gaussian wave packets] \label{cor:gaussian} We consider the situation of Theorem~\ref{theo:WPcodim1} and in particular the transition operator $\mathcal T^\flat$ defined by the parameters $\mu^\flat\neq 0$ and $(\alpha^\flat,\beta^\flat)\in\R^{2d}$. 
\begin{enumerate}\setlength{\itemsep}{0.5ex}
\item
If $v_0^\eps=\mathcal{WP}^\eps_{z_0} (g^{\Gamma_0})$ is a Gaussian state with width matrix 
$\Gamma_0\in{\mathfrak S}^+(d)$, then 
$$
v^\eps_2(t^\flat)= \gamma^\flat \sqrt{2\pi\over i\mu^\flat}\, {\rm e}^{iS^\flat/\eps}\,
\wp^\eps_{z^\flat}( g^{\Gamma^\flat})\ech
$$
$$ \mbox{with} \;\;
\Gamma^\flat = \Gamma_1(t^\flat,t_0,z_0)  -\frac{(\beta^\flat-\Gamma_1(t^\flat,t_0,z_0)\alpha^\flat)\otimes (\beta^\flat-\Gamma_1(t^\flat,t_0,z_0)\alpha^\flat) }{2\mu^\flat-\alpha^\flat\cdot\beta^\flat+\alpha^\flat\cdot\Gamma_1(t^\flat,t_0,z_0)\alpha^\flat}$$
and $\Gamma_1(t^\flat,t_0,z_0)$ is the image of $\Gamma_0$ by the flow map associated with $h_1(t,z)$ by~\eqref{def:Gamma}.
\item
If $A\in \mathcal C^\infty (\R^{2d})$ is a polynomial function and 
$v_0^\eps=\mathcal{WP}^\eps_{z_0} ({\rm op}^w_1(A)g^{\Gamma_0})$, then 
$$
v^\eps_2(t^\flat)= \gamma^\flat \sqrt{2\pi\over i\mu^\flat}\, {\rm e}^{iS^\flat/\eps}\,
\wp^\eps_{z^\flat}\!\left({\rm op}^w_1(A^\flat ) g^{\Gamma^\flat}\right)
$$
with 
$A^\flat= A\circ \Phi_{\alpha^\flat,\beta^\flat}(-(4\mu^\flat)^{-1})$ where $\Phi_{\alpha^\flat,\beta^\flat}(t)$ is the symplectic $2d\times 2d$ matrix given by
\begin{align} \label{def:Phi}
 &\qquad \qquad \Phi_{\alpha^\flat,\beta^\flat}(t) =\begin{pmatrix}  \1 -2t\beta^\flat\otimes\alpha^\flat&2t\alpha^\flat\otimes\alpha^\flat, \\-2t\beta^\flat\otimes\beta^\flat & \1 +2t\alpha^\flat\otimes\beta^\flat\end{pmatrix}.
\end{align}
\end{enumerate}
\end{corollary}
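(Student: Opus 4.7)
Both parts of the corollary reduce to understanding the action of $\mathcal T^\flat$ on a Gaussian or on a polynomial-times-Gaussian profile, because by (\ref{def:Gamma}) the propagated profile $\varphi_1(t^\flat)=\M[F_1(t^\flat,t_0,z_0)]\,g^{\Gamma_0}$ is again a Gaussian $g^{\Gamma_1(t^\flat,t_0,z_0)}$ (with phase of the normalization constant chosen by continuity), and by the exact Egorov identity recalled at the end of Section~\ref{sec:defclas} an initial profile $\op_1^w(A)g^{\Gamma_0}$ is transformed into $\op_1^w(A\circ F_1(t^\flat,t_0,z_0))\,g^{\Gamma_1(t^\flat,t_0,z_0)}$. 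My plan is therefore first to compute $\mathcal T^\flat g^{\Gamma_1}$ directly for assertion~(1), and then to interpret $\mathcal T^\flat$ as a scalar multiple of a metaplectic transformation in order to handle (2).

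\textbf{Part 1.} I would start from the integral representation (\ref{transf1}) of $\mathcal T^\flat$, insert $g^{\Gamma_1}$ (writing $\Gamma_1=\Gamma_1(t^\flat,t_0,z_0)$ for brevity), and use the symmetry of $\Gamma_1$ to expand $\Gamma_1(y-s\alpha^\flat)\cdot(y-s\alpha^\flat)=\Gamma_1 y\cdot y-2s\,\alpha^\flat\cdot\Gamma_1 y+s^2\alpha^\flat\cdot\Gamma_1\alpha^\flat$. The integrand then factors as $e^{i\Gamma_1 y\cdot y/2}\,e^{iWs^2+isU}$ with
\[
W=\tfrac12\bigl(2\mu^\flat-\alpha^\flat\cdot\beta^\flat+\alpha^\flat\cdot\Gamma_1\alpha^\flat\bigr),\qquad U=(\beta^\flat-\Gamma_1\alpha^\flat)\cdot y.
\]
Since $\Im\Gamma_1>0$ implies $\Im W>0$, the $s$-integral is an absolutely convergent Fresnel integral equal to $\sqrt{\pi/(-iW)}\,e^{-iU^2/(4W)}$. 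The remaining Gaussian $e^{-iU^2/(4W)}$ combines with $e^{i\Gamma_1 y\cdot y/2}$ into $e^{i\Gamma^\flat y\cdot y/2}$ with $\Gamma^\flat=\Gamma_1-(\beta^\flat-\Gamma_1\alpha^\flat)^{\otimes 2}/(2W)$, matching the stated rank-one formula. Membership $\Gamma^\flat\in\mathfrak S^+(d)$ will then follow from the matrix determinant lemma applied to $\Im\Gamma^\flat$. Identifying the residual prefactor $c_{\Gamma_1}\sqrt{\pi/(-iW)}/c_{\Gamma^\flat}$ with $\sqrt{2\pi/(i\mu^\flat)}$ amounts to a Siegel-space determinant identity, whose branch will be fixed by analytic continuation along a path from $\Gamma_1$ to $\Gamma^\flat$ in $\mathfrak S^+(d)$.

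\textbf{Part 2.} For $\mu^\flat\neq 0$, I would complete the square inside (\ref{transf2}) to obtain the formal identity
\[
\mathcal T^\flat=\sqrt{\pi/(-i\mu^\flat)}\,\exp\!\bigl(-iL^2/(4\mu^\flat)\bigr),\qquad L=\beta^\flat\cdot y-\alpha^\flat\cdot D_y.
\]
Here $L$ is the self-adjoint Weyl quantization of the linear symbol $\ell(x,\xi)=\beta^\flat\cdot x-\alpha^\flat\cdot\xi$, and since $\{\ell,\ell\}=0$ Moyal composition yields $L^2=\op_1^w(\ell^2)$ without quantum correction. Hence $\exp(-iL^2/(4\mu^\flat))$ is the metaplectic transformation $\M[\Phi]$ associated with the linear symplectic flow of the quadratic Hamiltonian $\ell^2/(4\mu^\flat)$, and a direct integration of the linearized Hamilton equations will identify $\Phi$ with the matrix $\Phi_{\alpha^\flat,\beta^\flat}((4\mu^\flat)^{-1})$ of (\ref{def:Phi}). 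By Egorov's theorem, which is exact for quadratic Hamiltonians,
\[
\M[\Phi]\,\op_1^w(B)=\op_1^w\!\bigl(B\circ\Phi_{\alpha^\flat,\beta^\flat}(-(4\mu^\flat)^{-1})\bigr)\,\M[\Phi],
\]
and applying this with $B=A\circ F_1(t^\flat,t_0,z_0)$ and then using Part 1 on $\M[\Phi]g^{\Gamma_1}$ will yield the claimed polynomial-times-Gaussian structure with $A^\flat$ as stated.

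\textbf{Expected obstacle.} The delicate step will be phase and branch tracking: $\sqrt{\pi/(-iW)}$, $\sqrt{\pi/(-i\mu^\flat)}$, and the normalizations $c_{\Gamma_1}$, $c_{\Gamma^\flat}$ are each determined only up to sign, and showing that their product collapses to the clean factor $\sqrt{2\pi/(i\mu^\flat)}$ requires a careful Siegel determinant identity combined with analytic continuation to fix the branches consistently. A subsidiary bookkeeping issue will be to verify, via a direct computation of $J\,\mathrm{Hess}(\ell^2)$, that the linearized Hamiltonian flow of $\ell^2$ indeed matches the matrix (\ref{def:Phi}) with the correct signs and transpose conventions in outer products such as $\alpha^\flat\otimes\beta^\flat$.
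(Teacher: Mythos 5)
Your proposal is correct and follows essentially the same route as the paper, whose proof is delegated to point (3) of Proposition~\ref{prop:T}: there too the width $\Gamma^\flat$ is obtained by the direct Fresnel computation on the integral representation \eqref{transf1} applied to $\varphi_1(t^\flat)=\mathcal M[F_1(t^\flat,t_0,z_0)]g^{\Gamma_0}=g^{\Gamma_1}$, while the polynomial case is handled by writing $\mathcal T_{\mu,\alpha,\beta}$ as a scalar multiple of the metaplectic transformation generated by $(\beta\cdot y-\alpha\cdot D_y)^2$ and invoking the exact Egorov theorem for quadratic Hamiltonians to produce $A\circ\Phi_{\alpha^\flat,\beta^\flat}(-(4\mu^\flat)^{-1})$. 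The one bookkeeping point you already flag is real: your (correct) evaluation $\sqrt{\pi/(-i\mu^\flat)}\,\e^{-iL^2/(4\mu^\flat)}$ differs in branch and in the sign of the exponent from the paper's \eqref{eq:T} and \eqref{FG}, so the identification of the overall prefactor with $\sqrt{2\pi/i\mu^\flat}$ and of the time $-(4\mu^\flat)^{-1}$ in $\Phi_{\alpha^\flat,\beta^\flat}$ requires fixing these conventions consistently, exactly as you anticipate.
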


As a concluding remark of this section, we want to emphasize that our results indeed generalize those of~\cite{Hag94,WW}. 
\begin{enumerate}\setlength{\itemsep}{0.5ex}
\item In the Schr\"odinger example~\eqref{ex:hag}, denoting by  $E_A(x)$ and $E_B(x)$  the  two eigenvalues of the potential matrix $V(x)$\ech as in~\cite{Hag94}, one has 
\[\alpha^\flat_S=0 ,\;\;\beta^\flat_S= \nabla (E_A-E_B)(q^\flat),\;\;
\mu^\flat_S= p^\flat\cdot \nabla (E_A-E_B)(q^\flat).
\]
These coefficients appear in equation~(5.3) of~\cite{Hag94}. There, the initial states 
are Gaussian wave packets that are multiplied 
with a polynomial function. Thus, the second part of Corollary~\ref{cor:gaussian} reproduces 
these results. 
\item 
For the Bloch example~\eqref{ex:WW}, we obtain 
\[\alpha^\flat_A=\nabla (E_+-E_-)(p^\flat) ,\;\;\beta^\flat_A= 0,\;\;\mu^\flat_A = -\frac 1 2 \nabla W(q^\flat) \cdot \nabla (E_+-E_-)(p^\flat), \]
 where $E_\pm(\xi)$ are the eigenvalues of $A(\xi)$ as in equation~(3.41) of~\cite{WW}. 
 The result of \cite[Theorem~3.20 (via Definition~3.18)]{WW} is therefore a special case of ours. 
  \end{enumerate}

We notice, that for these special examples either one of the coefficients $\alpha^\flat$ or $\beta^\flat$ is~$0$.
This need not be the case for more general Hamiltonians that have position and momentum variables 
mixed in the matrix part of the Hamiltonian. Actually, for Dirac Hamiltonians with electromagnetic potential~$(V,A)$, the function $\xi-A(t,x)$ appears in the coefficients of the matrix. Also for the propagation of acoustical waves in elastic media the Hamiltonian is of the form $\rho(x)\1_{\C^N} -\Gamma(x,\xi)$,where $\rho(x)>0$ is the density 
and $\Gamma(x,\xi)$ the elastic tensor.

\subsection{Organization of the paper} 
 The proof of Theorem~\ref{theo:WPcodim1} is decomposed into two steps: an analysis outside the crossing region in Section~\ref{sec:outside} and an analysis in the crossing region in Section~\ref{subsec:inside}, that allows to conclude the proof in Section~\ref{subsec:proof}, together with the one of  Corollary~\ref{cor:gaussian}. 
Finally, we gather in four Appendices various results about wave packets, algebraic properties of the projectors and parallel transport, analysis of the transfer operators $\mathcal T_{\mu,\alpha,\beta}$, and technical computations.

%%%%%%%%%%%%%%%%%%%%%%%%%%%%%%%%%%%%%%%%%%%%%%%%%%%

%%%%%%%%%%%%%%%%%%%%%%%%%%%%%%%%%%%%%%%%%%%%%%%

%%%%%%%%%%%%%%%%%%%%%%%%%%%%%%%%%%%%%%%%%%%%%%%

\section{Adiabatic decoupling outside the crossing region}\label{sec:outside}

In this section,  we consider a family of solutions to equation~(\ref{system}) in the case where the Hamiltonian $H(t,z)$ satisfies Assumption~\ref{hyp:codim1}  and  with an initial datum which is a coherent state  as in~\eqref{initialdata}.
We focus here on regions where the classical trajectories associated with the coherent state do not touch the crossing set $\Upsilon$ but are close enough. We prove the next adiabatic result. 

\begin{proposition}\label{prop:propagationt**}
Let $k\in\N$, $\delta = \delta(\eps) $ be such that $ \sqrt\eps\ll \delta\leq 1$. 
Let $f(t,z)=0$ be an equation of~$\Upsilon$ in an open set $\Omega\subset \R\times\R^{2d}$. 
 Assume that  for $j\in\{1,2\}$, 
 $$u^\eps_j=\mathcal{WP}^\eps_{\widetilde z_j} (\widetilde \varphi_j),$$
where $\widetilde \varphi_1,\, \widetilde \varphi_2\in\mathcal S(\R^d)$, $\widetilde z_1,\,\widetilde z_2\in \R^d$  are such that
 there exist $s_1,s_2\in\R $, $c,C>0$  such that for all $j\in\{1,2\}$ and 
$ t\in [s_1,s_2]$, $z_j(t):=\Phi_{j}^{t,s_1}(\widetilde z_j)\in\Omega$ with 
$|f(z_j(t))|>c\delta$
 and 
$$\left\|\psi^\eps(s_1)-\widehat {\vec V_1(s_1)} u^\eps_1 - \widehat {\vec V_2(s_1)} u^\eps_2\right\|_{\Sigma^k_\eps}\leq C\eps.$$
 Then, there exists $C_k>0$ such that for all $j\in\{1,2\}$, 
$$\sup_{t\in [s_1,s_2]} \left\| \widehat \Pi_j \psi^\eps(t) - \widehat {\vec V_j(t)} {\mathcal U}^\eps_{h_j}(t,s_1)
u^\eps_j \right\| _{\Sigma^k_\eps} \leq C_k\,\eps\, \delta^{-2}.$$
The constant $C_k$ does not depend on $\delta$ and $\eps$.
\end{proposition}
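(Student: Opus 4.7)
The strategy is to project equation~\eqref{system} onto each eigenspace via the quantized projector $\widehat{\Pi_j}$ and to show that $\widehat{\Pi_j}\psi^\eps$ is well-approximated by the ansatz $\Psi^\eps_j(t):=\widehat{\vec V_j(t)}\,\mathcal U^\eps_{h_j}(t,s_1)\,u^\eps_j$. Since by hypothesis the classical trajectories $z_1(t),z_2(t)$ stay at distance at least $c\delta$ from the crossing set $\Upsilon$, the Cauchy contour formula $\Pi_j(t,z) = -\tfrac{1}{2\pi i}\oint(H(t,z)-\zeta)^{-1}\,d\zeta$ along a contour of radius comparable to $\delta$ yields derivative bounds of the form $\|\partial_{(t,z)}^\alpha\Pi_j(t,z)\|\leq C_\alpha\delta^{-|\alpha|}$ in a tubular neighborhood of each trajectory. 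Combined with the polynomial growth estimate~\eqref{bound:projector} at infinity, these bounds govern all the Weyl symbolic calculus remainders used below; one first inserts a smooth cutoff localizing the symbols to such a tube and controls the discarded contribution via the off-trajectory concentration of the wave packets, which is Schwartz in the $\sqrt\eps$-rescaled variable.

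The core computation is a comparison of the equations satisfied by $\widehat{\Pi_j}\psi^\eps$ and $\Psi^\eps_j$. Applying $\widehat{\Pi_j}$ to~\eqref{system} and using the two-term Moyal expansion together with the algebraic relations $\Pi_j H = H\Pi_j = h_j\Pi_j$, $\Pi_j^2 = \Pi_j$ and $\Pi_1\Pi_2 = 0$, one derives
\[
i\eps\partial_t\bigl(\widehat{\Pi_j}\psi^\eps\bigr) - \widehat{h_j}\bigl(\widehat{\Pi_j}\psi^\eps\bigr) = i\eps\,\widehat{\bigl(\partial_t\Pi_j+\{h_j,\Pi_j\}\bigr)}\,\psi^\eps + O_{\Sigma^k_\eps}(\eps^2\delta^{-2}),
\]
where the error constant depends only on a finite number of the $\delta$-dependent seminorms of $\Pi_j$ and on derivative bounds of $H$. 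A parallel computation for $\Psi^\eps_j$, using Proposition~\ref{wpevol1} for the scalar evolution of $u^\eps_j(t)$ and the two-term Moyal calculus for the left multiplication by $\widehat{\vec V_j(t)}$, yields an equation of the same form in which the source on the right-hand side is now driven by $\partial_t\vec V_j+\{h_j,\vec V_j\}$. The definitions~\eqref{def:Omega}--\eqref{def:Theta} of $\Omega$, $K$, $K^*$ and the parallel transport relation~\eqref{eq:eigenvector} are precisely tailored so that, after projecting the difference of these two source terms onto $\mathrm{Im}(\Pi_j)$, it cancels to leading order in~$\eps$.

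The difference $\widehat{\Pi_j}\psi^\eps - \Psi^\eps_j$ therefore solves a Schr\"odinger-type equation with scalar principal part $\widehat{h_j}$, driven by a residual of size $O(\eps^2\delta^{-2})$ in $\Sigma^k_\eps$, with an initial error at $t=s_1$ of order $\eps$ coming from the hypothesis (including a negligible $O(\eps)$ mismatch between $\widehat{\Pi_j}\widehat{\vec V_j}$ and $\widehat{\vec V_j}$). Duhamel's formula applied with the $\eps$-uniform boundedness of $\mathcal U^\eps_{h_j}(t,s_1)$ on $\Sigma^k_\eps$ then converts the residual of size $\eps^2\delta^{-2}$ into a final error of size $\eps\delta^{-2}$. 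The principal obstacle is maintaining uniformity in $\delta$: every derivative of $\Pi_j$ costs a factor $\delta^{-1}$, so one must truncate the Moyal expansion at exactly the right order and verify via Calder\'on--Vaillancourt type estimates in the $\Sigma^k_\eps$ setting that the resulting remainders depend only on finitely many symbol seminorms, each carrying the correct power of $\delta^{-1}$ counted by the number of derivatives of $\Pi_j$ involved.
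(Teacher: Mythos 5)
Your reduction to a single Duhamel estimate misses the main difficulty, and the step where you assert the projected equation
\[
i\eps\partial_t\bigl(\widehat{\Pi_j}\psi^\eps\bigr) - \widehat{h_j}\bigl(\widehat{\Pi_j}\psi^\eps\bigr) = i\eps\,\widehat{\bigl(\partial_t\Pi_j+\{h_j,\Pi_j\}\bigr)}\,\psi^\eps + O_{\Sigma^k_\eps}(\eps^2\delta^{-2})
\]
is where the argument breaks. The symbol $\partial_t\Pi_j+\{h_j,\Pi_j\}$ is purely off-diagonal with respect to the block decomposition induced by $\Pi_j,\Pi_j^\perp$. Its $\Pi_j^\perp(\cdot)\Pi_j$ block is indeed absorbed by the parallel transport of $\vec V_j$ (this is the matrix $K$ of \eqref{def:K}), but the block $\Pi_j(\cdot)\Pi_j^\perp$ acts on $\widehat{\Pi_j^\perp}\psi^\eps$, i.e.\ on the component carried by the \emph{other} mode, and is genuinely of size $O(\eps)$ — it is the term $\frac{\eps}{2i}\widehat{B\Pi^\perp}\widehat{\Pi}^\perp$ in \eqref{preliminary_calcul}. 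Nothing in the equation for $\Psi^\eps_j$ cancels it, and Duhamel applied to an $O(\eps)$ source over a time interval of length $|s_2-s_1|=O(1)$ only yields $O(1)$; this is precisely the naive bound, and it is why Lemma~\ref{lem:apriori} extracts only $O(\delta)$ from this coupling on an interval of length $\delta$. To do better one must exploit that $|h_1-h_2|\gtrsim\delta$ along the trajectories, so that the coupling is non-resonant: one gains a factor $\eps\delta^{-1}$ either by a non-stationary phase in time, or, as the paper does, by replacing $\Pi_j$ with the superadiabatically corrected symbol $\Pi_j+\eps\mathbb P_j$ of Definition~\ref{def:superadiabatic}, where $\mathbb P_j\Pi_j^\perp=\frac{1}{2i}(h_j-h_j^\perp)^{-1}B_j\Pi_j^\perp$ is built exactly to cancel the coupling through $[H,\mathbb P_j]$ (Lemma~\ref{lem:super} and Lemma~\ref{lem:diagonalisation}, together with flow-transported cut-offs supported in $\{|f|>\delta\}$). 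That device, or an equivalent one, is entirely absent from your proposal, so the claimed $O(\eps^2\delta^{-2})$ residual is not established.

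A secondary but telling misconception concerns where the powers of $\delta$ come from. In the codimension-one setting of Assumption~\ref{hyp:codim1} the eigenprojectors $\Pi_j$ are smooth \emph{across} $\Upsilon$ (they are the eigenprojectors of the invertible matrix $\tilde H_0$), so your Cauchy-contour bound $\|\partial^\alpha_{(t,z)}\Pi_j\|\lesssim\delta^{-|\alpha|}$ near the trajectory is neither valid as a mechanism nor the source of the $\delta^{-2}$. The singular objects are the correctors $\mathbb P_j\propto(h_1-h_2)^{-1}$, whose derivatives obey \eqref{est:corrector}; the final $\eps\delta^{-2}$ arises from the $O(\eps\delta^{-1})$ mismatch of the corrected projector at time $s_1$ combined with the $O(\eps^2\delta^{-2})$ residual integrated by Duhamel, not from symbol seminorms of $\Pi_j$.
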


For fixed $\delta$, that is independent of $\eps$, this Proposition implies Theorem~\ref{theo:WPcodim1} for $t\in[0,t^\flat[$. We shall choose later $\delta=\eps^{1/3}$  for obtaining a global a priori estimate in  Section~\ref{sec:apriori} below. Finally with $\delta=\eps^{2/9}$, we will prove Theorem~\ref{theo:WPcodim1}  in Section~\ref{subsec:proof} by using the Proposition~\ref{prop:propagationt**}  for propagation times $t \in [t_0, t^\flat-\delta]$ and $t\in [t^\flat+\delta, t^\flat +T]$ with initial data at times $t=t_0$ and $t=t^\flat +\delta$ respectively.  

\begin{remark}\label{rem:generalisation2}
Pushing the construction of superadiabatic projectors of Appendix~\ref{App:projectors}, we would obtain that $\psi^\eps(t)$ can be approximated by an asymptotic sum of wave packets up to order $\eps^N \delta^{p(N)}$ for some $p(N)\leq N$ to be computed precisely. 
\end{remark}

\subsection{Proof of the adiabatic decoupling}

We prove here Proposition~\ref{prop:propagationt**}.
\begin{proof}
Because of the linearity of the equation, it is enough to assume that the contribution of $\psi^\eps(s_1)$ on one of the modes is negligible at the initial time~$s_1$. The roles of the two modes being symmetric, we can choose equivalently one or the other one. Therefore, without loss of generality, we assume
$\psi^\eps(s_1) = \widehat{\vec V_1(s_1)} u^\eps_1$,
and we focus on  
$$\psi^\eps_{1,{\rm app}}(t) := \widehat{\vec V_1(t)} \mathcal U^\eps_{h_1} (t,s_1) u^\eps_1$$
Then, using the parallel transport equation~\eqref{eq:eigenvector} associated with the eigenvalue $h_1$, 
\begin{align}\nonumber
i\eps \partial_t \psi^\eps_{1,{\rm app}}(t) &= \widehat h_1 \psi^\eps_{1,{\rm app}}(t) + 
 \left( \left[ \widehat {\vec V_1(t)} , \widehat h_1 \right] +i\eps \partial_t \widehat{ \vec V_1(t)}\right)\mathcal U^\eps_{h_1} (t,s_1) u^\eps_1 \\
 \label{eq:psi1app}
& =(\widehat h_1{\rm Id} + \eps \widehat  \Theta_1) \psi^\eps_{1,{\rm app}}(t) +
\eps^2\widehat{r(t)}\,\mathcal U^\eps_{h_1} (t,s_1) u^\eps_1,
\end{align}
where the remainder $r(t)$ depends on second order derivatives of $h_1$ and $\vec V_1$. Since 
$u^\eps_1$ is a wave packet with a Schwartz function amplitude, we obtain
\begin{equation}\label{eq:approxsol}
i\eps \partial_t \psi^\eps_{1,{\rm app}}(t) = (\widehat h_1{\rm Id} + \eps \widehat  \Theta_1) \psi^\eps_{1,{\rm app}}(t) + O(\eps^2)
\end{equation}
in $\Sigma^k_\eps$ for all $k\in \N$. 
\medskip

We now use the superadiabatic correctors of $\Pi_1$ and $\Pi_2$ defined in Definition~\ref{def:superadiabatic} that we denote  by $\mathbb P_1$ and $\mathbb P_2$, respectively, and the associated correctors $\Theta_1$ and $\Theta_2$ of the Hamiltonian $H$. Since $\mathbb P_1$ and $\mathbb P_2$ are singular on~$\Upsilon$, we use cut-off functions that follow the flows arriving at time $s_2$ in $\Phi^{s_2,s_1} _{h_1}(\tilde z_1)$. We introduce two sets of cut-off functions, one for each mode. Let $I$ an interval containing $[s_1,s_2]$ and for $j\in\{1,2\}$ let the cut-off functions
 $\chi^\delta_j ,\tilde \chi^\delta_j\in\mathcal C(I,{\mathcal C}_0^\infty(\R^{2d}))$ satisfy as in Lemma~\ref{lem:diagonalisation}:
 \begin{enumerate}
 \item For any $t\in I$ and any $z$ in the support of $\chi^\delta_j(t)$ and $\tilde \chi^\delta_j(t)$ we have
$|f(t,z)|>\delta$.
\item The functions $\chi^\delta_j$ and $\tilde\chi^\delta_j$ are identically equal to $1$ close to a trajectory $\Phi^{t,s_1}_{j}(\tilde z_1)$ for all $t\in I$ and they satisfy 
$$\partial_t \chi^\delta_j + \left\{h_j, \chi^\delta \right\}=0,\;\;\partial_t \tilde\chi^\delta_j + \left\{h_j, \tilde\chi^\delta_j \right\}=0.$$
\item The functions $\tilde\chi^\delta_j$ are supported in $\{\chi^\delta_j=1\}$. 
\item Finally, we require 
$\chi_1^\delta(s_2)=\chi_2^\delta(s_2)$ and $\tilde \chi_1^\delta(s_2)=\tilde \chi_2^\delta(s_2).$
\end{enumerate}
We set for $t\in [s_1,s_2]$ 
$$w^\eps_1 (t)= \widehat {\tilde \chi^\delta_1}(\widehat {\chi^\delta_1 \Pi^\eps_1}\psi^\eps(t)- 
\psi^\eps_{1,{\rm app}}(t)) ,\;\;
w^\eps_2(t)=  \widehat {\tilde \chi^\delta_2}\widehat {\chi^\delta_2 \Pi^\eps_{2}}\psi^\eps(t),$$
$$\mbox{where } \;\;\Pi^\eps_j(t,z)=\Pi_j(t,z)+\eps \mathbb P_j(t,z),\;\;\forall z\in \R^{2d}\setminus \Upsilon,\;\; t\in I, \;\;j\in\{1,2\}.$$
Then, as a consequence of~\eqref{eq:approxsol} and of Lemma~\ref{lem:diagonalisation}, we have for $j\in\{1,2\}$ and in $\Sigma^k_\eps$, 
$$i\eps \partial_t w^\eps_j(t)= (\hat h_j + \eps \widehat \Theta_j) w^\eps_j(t) +O(\eps^2\delta^{-2}).$$
For the initial data at time $t=s_1$, we have in $\Sigma^k_\eps$,
\[
w^\eps_1(s_1) =  \widehat {\tilde \chi^\delta_1}
\big(\widehat {\chi^\delta_1 \Pi^\eps_1}\widehat{\vec V_1}-\widehat{\vec V_1}\big)u^\eps_1 = O(\eps\delta^{-1}),
\quad
w^\eps_2(s_1) =  \widehat {\tilde \chi^\delta_2}\,
\widehat {\chi^\delta_2 \Pi^\eps_2}\,\widehat{\vec V_1}u^\eps_1 = O(\eps\delta^{-1}).
\]
We deduce that for any $k\in\N$, $j\in\{1,2\}$ and  $t\in [s_1,s_2]$, we have in $\Sigma^k_\eps$,
$w^\eps_j(t)=O( \eps \delta^{-2}).$
When $t=s_2$, we have 
\begin{align*}
w_1^\eps(s_2) + w_2^\eps(s_2) 
&= \widehat {\tilde \chi^\delta_1}
\left(\op_\eps(\chi^\delta_1 (\Pi^\eps_1+\Pi^\eps_2))\psi^\eps(s_2) - \psi^\eps_{1,\rm app}(s_2)\right) \\
&= 
 \widehat {\tilde \chi^\delta_1}(\psi^\eps(s_2)- \psi^\eps_{1,\rm app}(s_2)) + O(\eps\delta^{-1})
\end{align*}
and thus
$\displaystyle{
\widehat{\tilde \chi^\delta_1}(s_2)\psi^\eps(s_2)= \widehat{\tilde \chi^\delta_1}(s_2)\psi^\eps_{1,\rm app}(s_2) +O(\eps\delta^{-2}).
}$
Because of the localisation of the wave packet $\psi^\eps_{1,\rm app} (s_2)$, as stated in Remark~\ref{rem:apendix},  we have in $\Sigma^k_\eps$ for any $N\in\N$,
\[
\widehat{\tilde \chi^\delta_1}(s_2)\psi^\eps_{1,\rm app}(s_2) = \psi^\eps_{1,\rm app}(s_2) + 
 O(\eps^{N/2} \delta^{-N}).
\] 
Hence, choosing $N=2$, we obtain
 $$\widehat{\tilde \chi^\delta_1}(s_2)\psi^\eps(s_2)= \psi^\eps_{1,\rm app}(s_2) +O(\eps\delta^{-2}),$$
 and it only remains to study $(1-\widehat{\tilde \chi^\delta_1}(s_2))\psi^\eps(s_2)$. Before that, some remarks are in order. 
 Note that the arguments developed above do not depend on the choice of $s_2$ and could have been developed for any $s\in [s_1,s_2]$. They are also independent of the choices of the functions $\chi^\delta_j$ and 
 $\widetilde \chi^\delta_j$ as long as they satisfy the properties stated above. Therefore, we have actually obtained a more general result, namely that for any function $\theta$ supported in $\{|f|>\delta\}$ and equal to~$1$ close to $\Phi_{1}^{t,s_1}(\widetilde z_1)$, we have for $t\in[s_1,s_2]$,
 \begin{equation}\label{apriori34}
 \widehat{\theta}\psi^\eps(t)= \widehat \theta \psi^\eps_{1,\rm app}(t) +O(\eps\delta^{-2}).
 \end{equation}
 We can now study $(1-\widehat{\tilde \chi^\delta_1}(s_2))\psi^\eps(s_2)$. We set for $s\in[s_1,s_2]$,
 $w^\eps(s)= (1-\widehat{\tilde \chi^\delta_1}(s))\psi^\eps(s).$
 We have 
 \begin{align*}
 i\eps\partial_s w^\eps(s) &= \widehat H(s) w^\eps(s) -\left[ \widehat{\tilde \chi^\delta_1}(s), \widehat H(s)\right] \psi^\eps(s) -i\eps\widehat{\partial_s\tilde\chi^\delta_1(s)}\psi^\eps(s)\\
 &=  \widehat H(s) w^\eps(s)  -\eps \widehat{r^\eps_\delta(s)} \psi^\eps(s) +O(\eps^2\delta^{-2})
 \end{align*}
 where $r^\eps_\delta(s)$ depends linearly on $d\tilde \chi^\delta_1(s)$, and thus is compactly supported close to the trajectory $\Phi_{1}^{t,s_1}(\widetilde z_1)$ and equal to~$0$ very close to it. Therefore, by~\eqref{apriori34}  and  Remark~\ref{rem:apendix},
 $$\eps\,\widehat{r^\eps_\delta(s)} \psi^\eps(s)=\eps\,\widehat{r^\eps_\delta(s)} \psi^\eps_{1,\rm app}(s)=  O(\eps^{N/2+1} \delta^{-N-1})$$
 for any $N\in\N$. 
 Choosing $N=1$, we deduce $w^\eps(s_2)=O(\eps\delta^{-2})$. 
\end{proof}

%%%%%%%%%%%%%%%%%%%%%%%%%%%%%%%%%%%%%%%%%%

\subsection{A global a priori estimate} \label{sec:apriori}

In this section, we prove the following a priori estimate. 

\begin{lemma}\label{lem:apriori}
Let $k\in\N$ and $T>0$ such that $[t_0,t^\flat]$ is strictly included in $[t_0,t_0+T]$.  Then 
there exists a constant $C_k>0$ such that 
\begin{equation}\label{apriori}
\sup_{t\in[t_0,t_0+T]} \| \psi^\eps(t)- \widehat { \vec V_1}(t) v^\eps_1(t)\|_{\Sigma^k_\eps} \leq C_k \,\eps^{1/3},
\end{equation}
where $v^\eps_1(t) = {\mathcal U}_{h_1}^\eps(t,t_0)v^\eps_0$ for all $t\in[t_0,t_0+T]$.
\end{lemma}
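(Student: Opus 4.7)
My plan is to set $\delta=\eps^{1/3}$ and split $[t_0,t_0+T]$ into the two outer intervals $[t_0,t^\flat-\delta]$ and $[t^\flat+\delta,t_0+T]$ on which the classical trajectory $z_1(t)$ stays at distance $\gtrsim \delta$ from $\Upsilon$ (by the transversality condition~\eqref{hyp:transversality}), together with the boundary layer $[t^\flat-\delta,t^\flat+\delta]$. On the first outer interval, Proposition~\ref{prop:propagationt**} applies with $u_2^\eps=0$ and yields
$$\|\psi^\eps(t) - \widehat{\vec V_1}(t)\, v_1^\eps(t)\|_{\Sigma^k_\eps} \leq C\,\eps\,\delta^{-2} = C\,\eps^{1/3};$$
to pass from the estimate on $\widehat{\Pi_1}\psi^\eps$ to the estimate on $\psi^\eps$ itself, I use the exact identity $\widehat{\Pi_1}+\widehat{\Pi_2}=\1_{L^2}$ inherited from $\Pi_1+\Pi_2=\1_{\C^N}$.

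To traverse the boundary layer, I set $w^\eps(t)=\psi^\eps(t)-\widehat{\vec V_1}(t)v_1^\eps(t)$. A direct computation using $i\eps\partial_t v_1^\eps = \widehat{h_1}v_1^\eps$ gives $i\eps\partial_t w^\eps = \widehat H w^\eps - R^\eps$ with
$$R^\eps(t)=\bigl(\widehat H\,\widehat{\vec V_1}-\widehat{\vec V_1}\,\widehat{h_1} - i\eps\,\widehat{\partial_t\vec V_1}\bigr)v_1^\eps(t).$$
Expanding the Moyal products, using $H\vec V_1=h_1\vec V_1$ to cancel the principal contribution, and substituting the parallel transport identity $\partial_t \vec V_1 + \{h_1,\vec V_1\} = -i\Theta_1\vec V_1$ from Proposition~\ref{prop:eigenvector}, the symbol of the bracketed operator simplifies to $\tfrac{i\eps}{2}\{h_1-H,\vec V_1\} - \eps\,\Theta_1\vec V_1 + O(\eps^2)$, which is $O(\eps)$ with smooth coefficients. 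Under the smooth-crossing assumption, $\vec V_1$ and $\Theta_1$ remain smooth with polynomially-bounded derivatives on the compact phase-space region visited by the wave packet, so $\|R^\eps(t)\|_{\Sigma^k_\eps}\le C\eps$ uniformly, including inside the layer. Duhamel then yields
$$\|w^\eps(t^\flat+\delta)\|_{\Sigma^k_\eps} \leq \|w^\eps(t^\flat-\delta)\|_{\Sigma^k_\eps} + \eps^{-1}\cdot 2\delta\cdot C\eps = O(\eps^{1/3}).$$

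For the final interval $[t^\flat+\delta,t_0+T]$, I restart from $\psi^\eps(t^\flat+\delta)=\widehat{\vec V_1}(t^\flat+\delta)v_1^\eps(t^\flat+\delta)+\eta^\eps$ with $\|\eta^\eps\|_{\Sigma^k_\eps}\le C\eps^{1/3}$. Linearity together with the $\Sigma^k_\eps$-boundedness of $\U^\eps_H$ carries $\eta^\eps$ forward without loss, while Proposition~\ref{prop:propagationt**} applied (with $u^\eps_2=0$) to the remaining wave-packet initial datum contributes an additional $O(\eps\delta^{-2})=O(\eps^{1/3})$, closing the global estimate.

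The main obstacle will be the uniform bound $\|R^\eps(t)\|_{\Sigma^k_\eps}\le C\eps$ \emph{through} the crossing itself. Unlike the superadiabatic correctors $\mathbb P_j$ used in the proof of Proposition~\ref{prop:propagationt**} (which blow up like $(h_1-h_2)^{-1}$ on $\Upsilon$ and so force the loss of $\delta^{-2}$), the parallel-transported vector $\vec V_1$ and its coupling $\Theta_1=i\Omega_1+i(K_1-K_1^*)$ remain smooth across $\Upsilon$, because the smooth-crossing hypothesis makes $\Pi_1,\Pi_2$ smooth, $\Omega_1 \propto (h_1-h_2)$ vanish on $\Upsilon$, and $K_1$ involve only one derivative of smooth objects. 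The resulting balance $\eps\delta^{-2}\sim\delta$ fixes $\delta=\eps^{1/3}$ as the optimal choice.
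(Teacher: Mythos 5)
Your proof is correct and follows the paper's global strategy exactly: split at $t^\flat\pm\delta$, use Proposition~\ref{prop:propagationt**} on the outer intervals for an $O(\eps\delta^{-2})$ error, cross the boundary layer by a Duhamel argument that costs $O(\delta)$, and balance with $\delta=\eps^{1/3}$. The only genuine difference is the bookkeeping inside the layer. The paper projects the solution onto the two modes, setting $w_1^\eps=\widehat{\Pi_1}\psi^\eps-\widehat{\vec V_1}v_1^\eps$ and $w_2^\eps=\widehat{\Pi_2}\psi^\eps$, and derives the coupled system~\eqref{eq:tildeweps}, whose $O(\eps)$ inhomogeneity $\tfrac{i\eps}{2}\widehat{B_2\Pi_1}\widehat{\vec V_1}v_1^\eps$ produces the $O(\delta)$ loss; that projected formulation is then reused verbatim in Lemma~\ref{lem:step1} to extract the $\sqrt\eps$ transition term. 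You instead compare $\psi^\eps$ with the unprojected ansatz $\widehat{\vec V_1}v_1^\eps$ under the full propagator and observe that the residual $R^\eps$ is $O(\eps)$ uniformly through $\Upsilon$ because all symbols involved ($H$, $h_1$, $\vec V_1$, $\Theta_1$) remain smooth there --- which is precisely where the smooth-crossing hypothesis enters, as you rightly emphasize; this is a legitimate simplification for the lemma taken in isolation, at the price of not setting up the structure needed later. Two points to tighten: (i) $H\vec V_1=h_1\vec V_1$ holds only on $\Phi_1^{t,t_0}(U)$, so the principal symbol of $R^\eps$ does not vanish globally and you must invoke the $\sqrt\eps$-localization of $v_1^\eps$ (Remark~\ref{rem:apendix}) to discard its contribution --- your phrase about the ``region visited by the wave packet'' gestures at this but should be made explicit; (ii) when restarting at $t^\flat+\delta$, the hypothesis of Proposition~\ref{prop:propagationt**} requires an $O(\eps)$-accurate decomposition of the data, which your $\eta^\eps=O(\eps^{1/3})$ does not supply --- your fix of applying the proposition only to the exact wave-packet part and pushing $\eta^\eps$ through the $\Sigma^k_\eps$-bounded unitary group is the correct one and matches what the paper does implicitly.
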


In the next section, we shall  improve this estimate to go beyond this approximation and exhibits elements of order $\sqrt\eps$. However, we shall use this a priori estimate, together with elements developed in this section.

\begin{proof}
Of course, in view of the results of the preceding section, we choose $\delta>0$ and we focus on the time interval $[t^\flat-\delta,t^\flat+\delta]$, taking into account that for times $t\in [t_0, t^\flat-\delta]$, we have 
$$ \| \psi^\eps(t)- \widehat { \vec V_1}(t) v^\eps_1(t)\|_{\Sigma^k_\eps} \leq C_k\,  \eps\delta^{-2}$$
for some constant $C_k>0$, and that for $t\in[t^\flat +\delta, t_0+T]$ we can use the same kind of transport estimate since the trajectory does not meet again the crossing set. It is thus enough to pass from $t^\flat -\delta$ to $t^\flat +\delta$ and  analyze $\psi^\eps(t^\flat+\delta)$.
Between the times $t^\flat-\delta$ and $t^\flat +\delta$, 
 we cannot use the super-adiabatic corrections  to the projectors $\Pi_1$ and $\Pi_2$, because they become 
 singular when the eigenvalue gap closes. We thus simply work with the projectors $\Pi_1$ and $\Pi_2$. 
We define the families $w^\eps(t)=(w^\eps_1(t),w^\eps_2(t))$ by 
\begin{equation}\label{def:tildeweps} 
w_1^\eps=
\widehat \Pi_1 \psi^\eps -  \widehat {\vec V_1} v^\eps_1,\;\;
w^\eps_2 = \widehat \Pi_2 \psi^\eps.
\end{equation}
Since $\psi^\eps(t)$ and $\widehat {\vec V_1} v^\eps_1(t)$ are in all spaces $\Sigma^{\ell}_\eps(\R^d)$ for $\ell\in \N$ and $t\in [t_0,t_0+T]$, the same is true for $w_1^\eps(t)$ and $w_2^\eps(t)$. We now use our former observations, that is, the evolution equation~\eqref{eq:psi1app} for the approximate wave packet and the relation~\eqref{preliminary_calcul} of Appendix~\ref{App:projectors}, which gives that $w^\eps(t)$ satisfies the following system:
$$\left\{\begin{array}{rl}
i\eps \partial_t w^\eps_1 &= \widehat h_1w^\eps _1 + i\eps f^\eps_1,\\*[1ex] 
i\eps \partial_t  w^\eps_2 &= \widehat h_2 w^\eps _2 + 
\tfrac{i\eps}{2} \widehat{B_2\Pi_1} \widehat{\vec V_1} v_1 + i\eps f^\eps_2 
%+ i \eps   \left(\widehat {(h_1-h_2)\Pi_1\{\Pi_1,\Pi_1\} \Pi_1 } - \frac 12\widehat {\Pi_2B\Pi_1}\right)\widehat {\vec V_1} v_1^\eps   
\end{array}\right.$$
with
\begin{equation}\label{def:feps}
f^\eps_1 = -i\widehat\Theta_1 w^\eps_1 + \tfrac12 \widehat{B_1\Pi_2} w^\eps_2 + \eps r^\eps_1\quad \text{and}\quad
f^\eps_2 = -i\widehat\Theta_2 w^\eps_2 + \tfrac12 \widehat{B_2\Pi_1} w^\eps_1 + \eps r^\eps_2.
\end{equation}
The matrices $B_1$ and $B_2$ are defined according to
\[
B_j = -2\partial_t\Pi_j  - \{h_j,\Pi_j\} + \{\Pi_j,H\},\qquad j=1,2,
\]
and the sequences $(r^\eps_1(t))_{\eps>0}$ and $(r^\eps_2(t))_{\eps>0}$ are uniformly bounded in $\Sigma^k_\eps(\R^d)$ due to the polynomial growth estimate \eqref{bound:projector} for the eigenprojectors. We immediately deduce that for all $t\in[t_0,t_0+T]$, 
\begin{align}\nonumber
w^\eps_1(t) &=\  {\mathcal U}_{h_1}^\eps(t,t^\flat-\delta) w^\eps_1(t^\flat-\delta) + \int_{t^\flat-\delta}^t {\mathcal U}_{h_1}^\eps(t,\sigma) f^\eps_1(\sigma) d\sigma  ,\\*[-2ex]
\label{eq:tildeweps}\\*[-2ex]
\nonumber
w^\eps_2(t)  &=\  {\mathcal U}_{h_2}^\eps(t,t^\flat-\delta) w^\eps_2(t^\flat-\delta) + \int_{t^\flat-\delta}^t  {\mathcal U}_{h_2}^\eps(t,\sigma) f^\eps_2(\sigma) d\sigma \\
\nonumber
& \qquad +{1\over 2}  \int_{t^\flat-\delta}^t {\mathcal U}_{h_2}^\eps(t,\sigma)
\widehat {B_2 \Pi_1} \widehat {\vec V_1}(\sigma) v_1^\eps(\sigma)d\sigma.
\end{align}
Therefore, in $\Sigma^k_\eps(\R^d)$, for all times $t\in [t^\flat-\delta,t^\flat+\delta]$ and $j\in\{1,2\}$,
$ w^\eps_j(t)= O ( \eps\delta^{-2}) +O(\delta) .$
Choosing $\delta=\eps^{1/3}$, we obtain that $ w^\eps_j(t)= O (\eps^{1/3})$. 
\end{proof}

\section{Analysis in the crossing region}\label{subsec:inside}

We now want to pass through the crossing and derive a more precise estimate on the function~$\psi^\eps(t^\flat +\delta)$. We prove the following result. 

\begin{proposition}\label{prop:throughcrossing}
Assume $\sqrt\eps\ll\delta\ll \eps^{1/3}$. Then, for all $k\in\N$, there exists a constant $C_k>0$ such that 
$$
\left\|\psi^\eps(t^\flat +\delta) 
- \widehat{\vec V_1}(t^\flat +\delta)  v^\eps_1(t^\flat +\delta)
-  \sqrt\eps \widehat{\vec V_2}(t^\flat +\delta)  v^\eps_2(t^\flat +\delta ) \right\|_{\Sigma^k_\eps} \leq 
C_k(\eps \delta^{-2} + \eps^{1/3}\delta), 
$$
where $v^\eps_1(t) = \U^\eps_{h_1}(t,t_0)v^\eps_0$ and  $v^\eps_2(t)=\U^\eps_{h_2}(t,t^\flat)v^\eps_2(t^\flat)$ are as in Theorem~\ref{theo:WPcodim1}.
\end{proposition}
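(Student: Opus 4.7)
The plan is to analyse the Duhamel identities \eqref{eq:tildeweps} on the short window $[t^\flat-\delta, t^\flat+\delta]$. Apart from one coupling source in the equation for $w_2^\eps$, every contribution to $(w_1^\eps(t^\flat+\delta), w_2^\eps(t^\flat+\delta))$ is controlled by Proposition~\ref{prop:propagationt**} and the a priori bound of Lemma~\ref{lem:apriori}. The coupling source, analysed by a stationary phase at $\sigma = t^\flat$, produces the explicit $\sqrt\eps$ wave packet of the statement.

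The non-source terms are bounded as follows. In both lines of \eqref{eq:tildeweps}, the boundary contributions $\mathcal U_{h_j}^\eps(t^\flat+\delta, t^\flat-\delta)\,w_j^\eps(t^\flat-\delta)$ are $O(\eps\delta^{-2})$ in $\Sigma^k_\eps$ by Proposition~\ref{prop:propagationt**} applied on $[t_0, t^\flat-\delta]$ with $u_1^\eps = v_0^\eps$ and $u_2^\eps = 0$. The self-interaction integrals $\int \mathcal U_{h_j}^\eps f_j^\eps\, d\sigma$, with $f_j^\eps$ defined by \eqref{def:feps}, satisfy $\|f_j^\eps\|_{\Sigma^k_\eps} = O(\eps^{1/3})$ thanks to Lemma~\ref{lem:apriori}, so integration over a window of length $2\delta$ produces $O(\eps^{1/3}\delta)$. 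Since $w_1^\eps$ has no further source in \eqref{eq:tildeweps}, we already obtain $\|w_1^\eps(t^\flat+\delta)\|_{\Sigma^k_\eps} = O(\eps\delta^{-2} + \eps^{1/3}\delta)$, so the entire argument reduces to the analysis of the coupling source
$$I^\eps := \tfrac12\int_{t^\flat-\delta}^{t^\flat+\delta}\mathcal U_{h_2}^\eps(t^\flat+\delta,\sigma)\, \widehat{B_2\Pi_1}\,\widehat{\vec V_1}(\sigma)\, v_1^\eps(\sigma)\, d\sigma$$
for the $w_2^\eps$-equation.

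Applying Proposition~\ref{wpevol1} twice rewrites the integrand, at leading order in $\eps$, as a wave packet centred at $\tilde z(\sigma) := \Phi_{h_2}^{t^\flat+\delta,\sigma}(z_1(\sigma))$ (so that $\tilde z(t^\flat) = z_2(t^\flat+\delta)$), carrying the phase $\Psi(\sigma)/\eps$ with
$$\Psi(\sigma) := S_1(\sigma, t_0, z_0) + S_2(t^\flat+\delta, \sigma, z_1(\sigma)),$$
and multiplied by the matrix factor $B_2(\sigma, z_1(\sigma))\vec V_1(\sigma, z_1(\sigma))$ (using $\Pi_1 \vec V_1 = \vec V_1$). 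A direct Hamilton--Jacobi computation using \eqref{def:f} yields $\Psi'(\sigma) = -(h_1 - h_2)(\sigma, z_1(\sigma)) = -2f(\sigma, z_1(\sigma))$, which vanishes at $\sigma = t^\flat$ because $(t^\flat, z^\flat) \in \Upsilon$, while $\Psi''(t^\flat)$ is a nonzero multiple of $\mu^\flat$ by the transversality Assumption~\ref{hyp:codim1}(c). Substituting $\sigma = t^\flat + \sqrt\eps\, s$ expands the centre as
$$\tilde z(t^\flat+\sqrt\eps s) = z_2(t^\flat+\delta) + \sqrt\eps\,s\,F_2(t^\flat+\delta, t^\flat, z^\flat)\,J d_z f(t^\flat, z^\flat) + O(\eps s^2),$$
and rewriting $\wp^\eps_{\tilde z(\sigma)}$ in terms of $\wp^\eps_{z_2(t^\flat+\delta)}$ introduces translations and modulations of the profile by $(\alpha^\flat, \beta^\flat) = J d_z f(t^\flat, z^\flat)$, conjugated by the metaplectic transform $\mathcal M[F_2(t^\flat+\delta, t^\flat, z^\flat)]$. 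Integration in $s$ against the Gaussian phase reassembles these translations and modulations exactly into the transition operator $\mathcal T^\flat$ of \eqref{def:T_sharp} acting on $\varphi_1(t^\flat) = \mathcal M[F_1(t^\flat, t_0, z_0)]\varphi_0$; the coefficient $\gamma^\flat$ and the replacement of the matrix factor by $\vec V_2(t^\flat, z^\flat)$ follow from \eqref{def:V2}.

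The main technical obstacle will be to make this stationary-phase / wave-packet surgery uniform in the $\Sigma^k_\eps$ norm, controlling the polynomial growth \eqref{bound:projector} of the eigenprojectors near $\Upsilon$ and reassembling the various metaplectic corrections on either side of $t^\flat$ so that the outcome is identified precisely with $\sqrt\eps\,\widehat{\vec V_2}(t^\flat+\delta)\,\mathcal U_{h_2}^\eps(t^\flat+\delta, t^\flat)\,v_2^\eps(t^\flat)$, with $v_2^\eps(t^\flat)$ given by \eqref{eq:v2}. The accumulated errors are $O(\eps\delta^{-2})$ from the boundary and truncation estimates, $O(\eps^{1/3}\delta)$ from the self-interactions, $O(\eps/\delta)$ from extending the $s$-integral from $[-\delta/\sqrt\eps, \delta/\sqrt\eps]$ to $\R$ (using Fresnel decay at $\mu^\flat \neq 0$), and $O(\sqrt\eps\,\delta)$ from the next-order wave-packet corrections of Proposition~\ref{wpevol1}. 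Since $\sqrt\eps \leq \eps^{1/3}$ and $\eps/\delta \leq \eps\delta^{-2}$ on the regime $\sqrt\eps \ll \delta \ll \eps^{1/3}$, all remainders are absorbed into $C_k(\eps\delta^{-2} + \eps^{1/3}\delta)$.
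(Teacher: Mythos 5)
Your overall strategy is the same as the paper's: use the Duhamel identities \eqref{eq:tildeweps} together with Proposition~\ref{prop:propagationt**} and Lemma~\ref{lem:apriori} to reduce everything to the coupling source in the $w_2^\eps$-equation, then evaluate that source by a stationary-phase argument on the scale $\sigma = t^\flat+\sqrt\eps s$ and identify the outcome with $\sqrt\eps\,\mathcal T^\flat$. The error bookkeeping ($O(\eps\delta^{-2})$ from the boundary, $O(\eps^{1/3}\delta)$ from the self-interactions, $O(\eps\delta^{-1})$ and $O(\sqrt\eps\,\delta)$ from the oscillatory integral) also matches the paper's. However, two steps as you state them are not correct and need repair.

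First, the matrix factor in the source is $\tfrac12 B_2\Pi_1\vec V_1 = \tfrac12\Pi_1B_2\Pi_1\vec V_1 + \tfrac12\Pi_2B_2\Pi_1\vec V_1$, and the identity \eqref{def:V2} (via Lemma~\ref{lem:B}) only gives $\tfrac12\Pi_2B_2\vec V_1 = \gamma\vec V_2$; it does not identify the full factor with $\gamma\vec V_2$. The diagonal block $\Pi_1B_2\Pi_1 = (h_2-h_1)\Pi_1\{\Pi_1,\Pi_1\}\Pi_1$ does not vanish off $\Upsilon$ and must be disposed of separately: the paper does this \emph{before} any stationary phase, by observing that the factor $(h_2-h_1)$ sandwiched between $\mathcal U_{h_2}^\eps(t,\sigma)$ and $\mathcal U_{h_1}^\eps(\sigma,\cdot)$ is $i\eps\frac{d}{d\sigma}$ of the product of propagators, so an exact integration by parts shows this block contributes only $O(\eps)$. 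You could alternatively argue that this block vanishes at the stationary point and therefore only enters at the next order of the expansion, but that argument has to be made (with the correct power counting), not omitted. Second, your phase computation is wrong as stated: with $\Psi(\sigma)=S_1(\sigma,t_0,z_0)+S_2(t^\flat+\delta,\sigma,z_1(\sigma))$ the endpoint of the $h_2$-trajectory moves with $\sigma$, so $\Psi'(\sigma) = \tilde p(\sigma)\cdot\dot{\tilde q}(\sigma) - (h_1-h_2)(\sigma,z_1(\sigma))$, not $-(h_1-h_2)$. The extra endpoint term is exactly what is cancelled by the $O(1/\eps)$ phases produced when you recenter $\wp^\eps_{\tilde z(\sigma)}$ at the fixed point; after that cancellation the effective curvature at the stationary point is $2\mu^\flat+\alpha^\flat\cdot\beta^\flat$ (Lemma~\ref{lem:phasis}), not ``a nonzero multiple of $\mu^\flat$'', and the surplus $\alpha^\flat\cdot\beta^\flat$ is absorbed by the Baker--Campbell--Hausdorff factor hidden in the definition \eqref{transf2} of $\mathcal T_{\mu,\alpha,\beta}$. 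Without tracking these compensations explicitly (as the paper does in Lemmas~\ref{lem:trans} and~\ref{lem:transap}), the identification of the limit operator with $\mathcal T_{\mu^\flat,\alpha^\flat,\beta^\flat}$ is not established.
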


\begin{proof}
We split the proof in several steps. 
In Lemma~\ref{lem:step1} we use the a priori estimate of Lemma~\ref{lem:apriori} to simplify the approximation of $\psi^\eps(t^\flat + \delta)$ and exhibit the contribution of order $\sqrt\eps$ according to 
$$
\psi^\eps(t^\flat+\delta)= \widehat{\vec V_1}(t^\flat+\delta)v^\eps_1(t^\flat+\delta) + 
{\rm e}^{iS^\flat /\eps}\, \U_{h_2}^\eps(t^\flat+\delta,t^\flat)\,  A_\eps + O(\eps\delta^{-2})  +O(\eps^{1/3} \delta).
$$
Then, we carefully analyze the contribution $A_\eps$ and construct a preliminary transfer operator $\mathcal T^\eps$
satisfying
\[
A_\eps= \wp^\eps_{z^\flat} {\mathcal T}^\eps \varphi_1(t^\flat) + O(\sqrt\eps \delta),
\]
see Lemma~\ref{lem:trans}. As the third step, Lemma~\ref{lem:transap} establishes the relation to the transfer operator $\mathcal T^\flat$ according to 
\[ 
{\mathcal T}^\eps = \sqrt\eps \,\mathcal{Q}^\eps(0) \mathcal T^\flat + O(\sqrt \eps \delta) + O(\eps\delta^{-1})
\]
with ${\mathcal Q}^\eps(0) = \op_1^w((\gamma\vec V_2)(t^\flat,z^\flat + \sqrt\eps\bullet))$. 
  The wave packet relation~\eqref{w:pseudo} in combination with symbolic calculus 
  implies for all $\varphi\in\mathcal S(\R^d)$ that
  \begin{align*}
  \mathcal{WP} ^\eps_{z^\flat} {\mathcal Q}^\eps(0)\varphi &= \widehat {\gamma\vec V_2}(t^\flat) 
  \mathcal{WP} ^\eps_{z^\flat}\varphi \\
  &= \widehat{\vec V_2}(t^\flat) \widehat{\gamma}(t^\flat)\mathcal{WP} ^\eps_{z^\flat}\varphi + O(\eps) 
  =  \widehat{\vec V_2}(t^\flat) \gamma^\flat\,\mathcal{WP} ^\eps_{z^\flat}\varphi + O(\sqrt\eps).
  \end{align*}
  Hence, we have proven that
 \begin{align*}
 &{\rm e}^{iS^\flat /\eps}\, \U_{h_2}^\eps(t^\flat+\delta,t^\flat)\,  A_\eps \\
 &= 
 \sqrt\eps\, \gamma^\flat\,{\rm e}^{iS^\flat /\eps}\, \U_{h_2}^\eps(t^\flat+\delta,t^\flat) \widehat {\vec V_2}(t^\flat) 
 \wp^\eps_{z^\flat} {\mathcal T}^\flat \varphi_1(t^\flat) + O(\eps\delta^{-2})  +O(\eps^{1/3} \delta)\\
 &=\sqrt\eps\, \U_{h_2}^\eps(t^\flat+\delta,t^\flat)\, \widehat {\vec V_2}(t^\flat) v^\eps_2(t^\flat) + 
 O(\eps\delta^{-2})  +O(\eps^{1/3} \delta).
 \end{align*}
It remains to analyze the function $\omega(t) = \vec V_2(t)\U^\eps_{h_2}(t,t^\flat)-\U^\eps_{h_2}(t,t^\flat)\vec V_2(t^\flat)$. An analogous calculation to the one at the beginning of the proof of Proposition~\ref{prop:propagationt**} yields that
\[
i\eps\partial_t\omega = \widehat h_2\omega + O(\eps).
\]
Since $\omega(t^\flat) = 0$, the Duhamel principle implies that $\omega(t^\flat+\delta) = O(\delta)$ and 
\[
{\rm e}^{iS^\flat /\eps}\, \U_{h_2}^\eps(t^\flat+\delta,t^\flat)\,  A_\eps = 
\sqrt\eps\,  \widehat {\vec V_2}(t^\flat+\delta) \U_{h_2}^\eps(t^\flat+\delta,t^\flat)\, v^\eps_2(t^\flat) + 
 O(\eps\delta^{-2})  +O(\eps^{1/3} \delta).
\]
\end{proof}

\subsection{Using the a priori estimate}
We start describing the part of the wave packet that has been transferred at the crossing and 
identify its main contribution.

\begin{lemma} \label{lem:step1}
Let $k\in\N$. With the assumptions of Proposition~\ref{prop:throughcrossing}, we have in $\Sigma^k_\eps(\R^d)$,
$$\psi^\eps(t^\flat+\delta)= \widehat{\vec V_1}(t^\flat+\delta)v^\eps_1(t^\flat+\delta) + 
{\rm e}^{iS^\flat /\eps}\, \U_{h_2}^\eps(t^\flat+\delta,t^\flat)\,  A_\eps + O(\eps\delta^{-2}) +O(\eps^{1/3} \delta)$$ 
\begin{equation}\label{def:Aeps}
\mbox{with}\;\;A_\eps = \int_{t^\flat-\delta}^{t^\flat +\delta}  \U_{h_2}^\eps(t^\flat,\sigma)
\widehat{\gamma\vec V_2}(\sigma)\, \U_{h_1}^\eps(\sigma, t^\flat) 
{\mathcal{WP}}_{z^\flat} \varphi_1(t^\flat) d\sigma,
\end{equation}
where the eigenvector $\vec V_2$ is defined in \eqref{def:V2} and the Schwartz function $\varphi_1(t^\flat)$ is associated with the profile $\varphi_0$ of the initial wave packet according to Proposition~\ref{wpevol1}.
\end{lemma}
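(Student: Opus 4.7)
The plan is to extract the non-adiabatic transfer from the coupled system for the components
$w^\eps_1 = \widehat{\Pi_1}\psi^\eps - \widehat{\vec V_1}v^\eps_1$ and $w^\eps_2 = \widehat{\Pi_2}\psi^\eps$ already set up in the proof of Lemma~\ref{lem:apriori}, namely
$$i\eps\partial_t w^\eps_1 = \widehat{h_1}\,w^\eps_1 + i\eps f^\eps_1,\qquad i\eps\partial_t w^\eps_2 = \widehat{h_2}\,w^\eps_2 + \tfrac{i\eps}{2}\widehat{B_2\Pi_1}\widehat{\vec V_1}\,v^\eps_1 + i\eps f^\eps_2,$$
with $f^\eps_j$ as in~\eqref{def:feps}. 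The a priori estimate of Lemma~\ref{lem:apriori} propagates to $\|w^\eps_j(t)\|_{\Sigma^k_\eps} = O(\eps^{1/3})$ and hence $\|f^\eps_j(t)\|_{\Sigma^k_\eps}=O(\eps^{1/3})$ uniformly on $[t^\flat-\delta, t^\flat+\delta]$. Applying Proposition~\ref{prop:propagationt**} on $[t_0, t^\flat-\delta]$ — where $z_1(t)$ stays at distance at least $c\delta$ from $\Upsilon$ by the transversality~\eqref{hyp:transversality} — produces $\|w^\eps_j(t^\flat-\delta)\|_{\Sigma^k_\eps}=O(\eps\delta^{-2})$. Duhamel's formula on $[t^\flat-\delta, t^\flat+\delta]$ then yields
$$w^\eps_2(t^\flat+\delta) = \tfrac12\!\int_{t^\flat-\delta}^{t^\flat+\delta}\!\!\U^\eps_{h_2}(t^\flat+\delta,\sigma)\,\widehat{B_2\Pi_1}\widehat{\vec V_1}(\sigma)\,v^\eps_1(\sigma)\,d\sigma + O(\eps\delta^{-2}+\eps^{1/3}\delta),$$
and $\|w^\eps_1(t^\flat+\delta)\|_{\Sigma^k_\eps}=O(\eps\delta^{-2}+\eps^{1/3}\delta)$. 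The decomposition $\psi^\eps = \widehat{\vec V_1}v^\eps_1 + w^\eps_1 + w^\eps_2 + O(\eps)$ (the $O(\eps)$ accounting for $\widehat{\Pi_1}+\widehat{\Pi_2} = \mathbb I + O(\eps)$ by symbolic calculus) delivers the $\vec V_1$-term, and the remaining task is to rewrite the surviving integral as $e^{iS^\flat/\eps}\U^\eps_{h_2}(t^\flat+\delta,t^\flat)A_\eps$.

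I would next factor $\U^\eps_{h_2}(t^\flat+\delta,\sigma) = \U^\eps_{h_2}(t^\flat+\delta,t^\flat)\U^\eps_{h_2}(t^\flat,\sigma)$ to pull the outer propagator to the left. By Proposition~\ref{wpevol1} with $S^\flat := S_1(t^\flat,t_0,z_0)$, one has $v^\eps_1(t^\flat) = e^{iS^\flat/\eps}\wp^\eps_{z^\flat}\varphi_1(t^\flat) + O(\sqrt\eps)$ in $\Sigma^k_\eps$, so that $v^\eps_1(\sigma) = e^{iS^\flat/\eps}\U^\eps_{h_1}(\sigma,t^\flat)\wp^\eps_{z^\flat}\varphi_1(t^\flat) + O(\sqrt\eps)$ throughout the crossing window; the residual contributes $O(\sqrt\eps\,\delta)$ after multiplication by the bounded operator $\widehat{B_2\Pi_1}\widehat{\vec V_1}(\sigma)$ and integration over an interval of length $2\delta$, which is absorbed. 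The decisive identification is that $\tfrac12\widehat{B_2\Pi_1}\widehat{\vec V_1}(\sigma) = \widehat{\gamma\vec V_2}(\sigma) + O(\delta)+O(\eps)$ in $\mathcal{L}(\Sigma^k_\eps)$: using $H = v\mathbb I + f(\Pi_1-\Pi_2)$ together with $B_2 = -2\partial_t\Pi_2 - \{h_2,\Pi_2\} + \{\Pi_2,H\}$, a direct computation gives
$$B_2\big|_\Upsilon = -2\partial_t\Pi_2 - 2\{v,\Pi_2\} + 2\{f,\Pi_2\}\Pi_2,$$
so that $B_2\Pi_1\vec V_1\big|_\Upsilon = -2(\partial_t\Pi_2+\{v,\Pi_2\})\vec V_1$, the $\{f,\Pi_2\}\Pi_2$ contribution disappearing because $\Pi_2\vec V_1 = 0$. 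Differentiating $\Pi_2^2=\Pi_2$ shows that $(\partial_t\Pi_2+\{v,\Pi_2\})\vec V_1$ lies in the range of $\Pi_2$, whence
$$\tfrac12 B_2\Pi_1\vec V_1\big|_\Upsilon = -\Pi_2(\partial_t\Pi_2+\{v,\Pi_2\})\vec V_1 = \gamma\vec V_2$$
by the definition~\eqref{def:V2}. Off~$\Upsilon$, smoothness and Taylor's formula give a difference of order $|f|=O(\delta)$ on the relevant support, and the discrepancy between $\widehat{AB}$ and $\widehat A\widehat B$ is $O(\eps)$ by symbolic calculus; integration over the crossing window turns these into $O(\delta^2+\eps\delta)$, again absorbed.

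Combining the substitutions transforms the integral into $e^{iS^\flat/\eps}\U^\eps_{h_2}(t^\flat+\delta,t^\flat)A_\eps$ and yields the claim, since the ancillary errors $\sqrt\eps\,\delta$, $\delta^2$, $\eps\delta$ are all dominated by $\eps\delta^{-2}+\eps^{1/3}\delta$ under the hypothesis $\sqrt\eps\ll\delta\ll\eps^{1/3}$. The principal subtlety I expect is the clean algebraic reduction $\tfrac12 B_2\Pi_1\vec V_1\big|_\Upsilon = \gamma\vec V_2$: the cancellation of the spurious term $2\{f,\Pi_2\}\Pi_2$ (via $\Pi_2\vec V_1=0$) together with the range property of $\partial_t\Pi_2 + \{v,\Pi_2\}$ acting on $\vec V_1$ eliminates any residual $\Pi_1$-component of the source, which is precisely what allows replacement of the coupling symbol by $\widehat{\gamma\vec V_2}$ without an extra projection and without losing the required order.
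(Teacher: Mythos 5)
Your proposal is correct and its skeleton is the one the paper uses: the coupled Duhamel system for $w^\eps_1,w^\eps_2$, the a priori bound $f^\eps_j=O(\eps^{1/3})$ from Lemma~\ref{lem:apriori}, initial data of size $O(\eps\delta^{-2})$ at $t^\flat-\delta$ from Proposition~\ref{prop:propagationt**}, the factorization $\U^\eps_{h_2}(t^\flat+\delta,\sigma)=\U^\eps_{h_2}(t^\flat+\delta,t^\flat)\U^\eps_{h_2}(t^\flat,\sigma)$, and the substitution $v^\eps_1(\sigma)=\e^{iS^\flat/\eps}\,\U^\eps_{h_1}(\sigma,t^\flat)\wp^\eps_{z^\flat}\varphi_1(t^\flat)+O(\sqrt\eps)$. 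The one step you handle genuinely differently is the reduction of $\tfrac12 B_2\Pi_1\vec V_1$ to $\gamma\vec V_2$. The paper splits $B_2\Pi_1=\Pi_1B_2\Pi_1+\Pi_2B_2\Pi_1$: Lemma~\ref{lem:B} gives the \emph{exact} identity $\tfrac12\Pi_2B_2\Pi_1\vec V_1=\gamma\vec V_2$ everywhere (not only on $\Upsilon$, where you prove it and then Taylor-extend), and the diagonal piece $\Pi_1B_2\Pi_1=(h_2-h_1)\Pi_1\{\Pi_1,\Pi_1\}\Pi_1$ is killed by writing its time integral as $i\eps\int\frac{d}{d\sigma}(\cdots)d\sigma$, exploiting the oscillation between $\U^\eps_{h_2}$ and $\U^\eps_{h_1}$, which yields $O(\eps)$. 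You instead bound the entire discrepancy, which is exactly $-f\,\Pi_1\{\Pi_1,\Pi_1\}\vec V_1$, pointwise by $O(\delta)$ on the effective support of $v^\eps_1(\sigma)$ and get $O(\delta^2)$ after integration; this is admissible only because $\delta\ll\eps^{1/3}$ forces $\delta^2\ll\eps^{1/3}\delta$, so your argument is more elementary but tied to the scaling regime, whereas the paper's exact-derivative trick gives the sharper $O(\eps)$ independently of the upper constraint on $\delta$. Two small points of hygiene: the claim ``$\tfrac12\widehat{B_2\Pi_1}\widehat{\vec V_1}(\sigma)=\widehat{\gamma\vec V_2}(\sigma)+O(\delta)+O(\eps)$ in $\mathcal L(\Sigma^k_\eps)$'' is false as a global operator-norm statement (the symbol $f$ is not uniformly $O(\delta)$); it only holds after composing with the localization of the wave packet near $z_1(\sigma)$ via \eqref{w:pseudo} and Remark~\ref{rem:apendix}, which is evidently what you intend. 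And $\widehat{\Pi_1}+\widehat{\Pi_2}=\1$ exactly by linearity of the Weyl quantization, so no $O(\eps)$ is needed there.
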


\begin{proof}
%[Proof of Proposition~\ref{prop:throughcrossing}]
We again analyse the functions $w^\eps_1(t)$ and $w^\eps_2(t)$ introduced in~\eqref{def:tildeweps}, that are of order $\eps\delta^{-2}$ at time $t=t^\flat-\delta$.  By the a priori estimate of Lemma~\ref{lem:apriori}, the remainder terms $f^\eps_1(t)$ and $f^\eps_2(t)$, which appear in~\eqref{def:feps}, are  of order $\eps^{1/3}$. Therefore,  the relation~\eqref{eq:tildeweps} gives  for all times $t\in[t^\flat-\delta,t^\flat+\delta]$ and in $\Sigma^k_\eps(\R^d)$,
\begin{align*} 
w^\eps_1(t) & = O(\eps \delta^{-2})  + O(\delta \eps^{1/3}) ,\\
w^\eps_2(t)  &=  O(\eps \delta^{-2})  + O(\delta\eps^{1/3})  + 
{1\over 2}  \int_{t^\flat-\delta}^t {\mathcal U}_{h_2}^\eps(t,\sigma)
\widehat {B_2 \Pi_1} \widehat {\vec V_1}(\sigma) v_1^\eps(\sigma)d\sigma.
\end{align*}
At this stage of the proof, we write $B_2\Pi_1 = \Pi_1B_2\Pi_1 + \Pi_2B_2\Pi_1$ and take advantage of $\displaystyle{\Pi_1B_2\Pi_1=(h_2 -h_1)\Pi_1 \{\Pi_1,\Pi_1\} \Pi_1}$ (see Lemma~\ref{lem:B})
to write  
\begin{align*}
&\int_{t^\flat-\delta}^t  {\mathcal U}_{h_2}^\eps(t,\sigma)
\widehat {\Pi_1 B_2 \Pi_1} \widehat {\vec V_1}(\sigma)\,  
{\mathcal U}^\eps_{h_1}(\sigma,t^\flat-\delta) v^\eps_1(t^\flat-\delta) d\sigma \\
&= 
i\eps \int_{t^\flat-\delta}^t  {d\over d\sigma} \left(
 {\mathcal U}_{h_2}^\eps(t,\sigma)\, {\rm op}_\eps^w\!\left(
\Pi_1 \{\Pi_1,\Pi_1\} \Pi_1\vec V_1(\sigma)\right)  
{\mathcal U}_{h_1}^\eps(\sigma,t^\flat-\delta)\right) v_1^\eps(t^\flat-\delta)\,d\sigma 
+ \eps\rho^\eps(t) \\*[1ex]
&=  \eps \tilde\rho^\eps(t), 
\end{align*}
where both families $(\rho^\eps(t))_{\eps>0}$ and $(\tilde\rho^\eps(t))_{\eps>0}$ are uniformly bounded in $\Sigma^k_\eps(\R^d)$. 
Therefore, 
\[
\int_{t^\flat-\delta}^t {\mathcal U}_{h_2}^\eps(t,\sigma)
\widehat {B_2 \Pi_1} \widehat {\vec V_1}(\sigma) v_1^\eps(\sigma)d\sigma = 
\int_{t^\flat-\delta}^t {\mathcal U}_{h_2}^\eps(t,\sigma)
\widehat{\Pi_2 B_2\Pi_1}\widehat{\vec V_1}(\sigma) v_1^\eps(\sigma)d\sigma + O(\eps).
\]
By Lemma~\ref{lem:B} and the definition of the eigenvector $\vec V_2$ (see~\eqref{def:V2})
\[
\tfrac12 \Pi_2 B_2 \vec V_1 = \Pi_2(-\partial_t\Pi_2 -\{v,\Pi_2\})\vec V_1 = \gamma\vec V_2.
\] 
According to Proposition~\ref{wpevol1}, we have for the wave packet
\begin{align*}
v^\eps_1(\sigma) &= \U_{h_1}(\sigma,t^\flat) v^\eps_1 (t^\flat) 
=   \U_{h_1}(\sigma,t^\flat) {\rm e}^{iS^\flat /\eps} {\mathcal{WP}}_{z^\flat} \varphi_1(t^\flat) +O(\eps).
\end{align*}
Therefore, 
\begin{align*}
&{1\over 2}  \int_{t^\flat-\delta}^t {\mathcal U}_{h_2}^\eps(t,\sigma)
\widehat {B_2 \Pi_1} \widehat {\vec V_1}(\sigma) v_1^\eps(\sigma)d\sigma\\
&= 
{\rm e}^{iS^\flat /\eps}\, {\mathcal U}_{h_2}^\eps(t,t^\flat)
 \int_{t^\flat-\delta}^t {\mathcal U}_{h_2}^\eps(t^\flat,\sigma) \,\widehat{\gamma\vec V_2}(\sigma)\,
  \U_{h_1}(\sigma,t^\flat) {\mathcal{WP}}_{z^\flat} \varphi_1(t^\flat)d\sigma + O(\eps),
\end{align*}
and, in terms of the function $A_\eps$ is defined in~\eqref{def:Aeps}, we are left at time $t=t^\flat+\delta$ with 
\begin{align*}
 w^\eps_1(t^\flat +\delta) &=O(\eps \delta^{-2} ) + O(\delta\eps^{1/3})  ,\\
w^\eps_2(t^\flat +\delta) &= {\rm e}^{iS^\flat /\eps}\, \U_{h_2}^\eps(t^\flat+\delta,t^\flat)\, A_\eps +O(\eps \delta^{-2} ) + O(\delta\eps^{1/3}).
\end{align*}
\end{proof}

\subsection{Constructing the transfer operator}

Next, we relate the transition term $A_\eps$ to an integral operator that is defined in terms of the crossing parameters $\mu^\flat$ and $(\alpha^\flat,\beta^\flat)$ introduced in Theorem~\ref{theo:WPcodim1}.

\begin{lemma}\label{lem:trans}
Let $k\in\N$. With the assumptions of Proposition~\ref{prop:throughcrossing}, 
there exist 
\begin{itemize}
\setlength{\itemsep}{1ex}
\item[-]
a smooth real-valued map $\sigma\mapsto \Lambda(\sigma)$ with
$\Lambda(0) = 0$, $\dot\Lambda(0) = 0$, $\ddot\Lambda(0) = 2\mu^\flat+ \alpha^\flat\cdot\beta^\flat$,
\item[-] 
a smooth vector-valued map $\sigma\mapsto z(\sigma) = (q(\sigma),p(\sigma))$ with 
$z(0) = 0$, $\dot z(0)= (\alpha^\flat,\beta^\flat)$,
\item[-]
a smooth map $\sigma\mapsto {\mathcal Q}^\eps(\sigma)$ of operators, 
that map Schwartz functions to Schwartz functions, with 
${\mathcal Q}^\eps(0) = \op_1^w(\gamma\vec V_2(t^\flat,z^\flat + \sqrt\eps\bullet))$,
\end{itemize}
such that the transition quantity~$A_\eps$ defined in Lemma~\ref{lem:step1} satisfies
\begin{equation}\label{eq:claim}
A_\eps= \wp^\eps_{z^\flat} {\mathcal T}^\eps \varphi_1(t^\flat) + O(\sqrt\eps \delta)
\end{equation}
in $\Sigma^k_\eps(\R^d)$ for the integral operator ${\mathcal T}^\eps$ defined by
\[
{\mathcal T}^\eps\varphi(y) = 
 \int_{-\delta}^{+\delta} 
{\rm e}^{\frac{i}{\eps} \Lambda(\sigma)} 
{\mathcal Q}^\eps(\sigma) 
{\rm e}^{i p_\eps(\sigma)\cdot (y-q_\eps(\sigma))} \varphi(y-q_\eps(\sigma)) \,d\sigma ,\quad 
\varphi\in{\mathcal S}(\R^d),
\]
where we have used the scaling notation $z_\eps(\sigma) = z(\sigma)/\sqrt\eps$.
\end{lemma}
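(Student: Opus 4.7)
The strategy is to evaluate the integrand of $A_\eps$ pointwise in $\sigma$ by applying three successive semi-classical transformations to the wave packet $\wp^\eps_{z^\flat}\varphi_1(t^\flat)$: a forward $h_1$-propagation, multiplication by the symbol $\gamma\vec V_2$, and a backward $h_2$-propagation. Each step preserves the wave-packet structure up to an $O(\eps)$ error, and assembling the resulting phases and centers after a suitable recentering on $z^\flat$ will produce the operator $\mathcal T^\eps$ announced in the lemma.

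First, Proposition~\ref{wpevol1} applied with $h=h_1$ gives
\[
\U_{h_1}^\eps(t^\flat+\sigma, t^\flat)\wp^\eps_{z^\flat}\varphi_1(t^\flat) =
e^{iS_1(\sigma)/\eps}\,\wp^\eps_{z_1(\sigma)}\varphi_1^\eps(\sigma) + O(\eps),
\]
with $z_1(\sigma) = \Phi^{t^\flat+\sigma, t^\flat}_{h_1}(z^\flat)$, $S_1(\sigma)$ the $h_1$-action, and $\varphi_1^\eps(\sigma)$ the metaplectic profile. The multiplier is then transferred to a symbolic action on the profile via the exact identity
$\widehat a\,\wp^\eps_z\varphi = \wp^\eps_z\,\op_1^w(a(z+\sqrt\eps\bullet))\varphi$, which defines the smooth family
\[
\mathcal Q^\eps(\sigma) = \op_1^w\!\big((\gamma\vec V_2)(t^\flat+\sigma,\, z_1(\sigma) + \sqrt\eps\bullet)\big),
\]
reducing at $\sigma = 0$ to the expression in the statement since $z_1(0) = z^\flat$. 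The backward $h_2$-step, obtained from Proposition~\ref{wpevol1} via the unitarity of the flow, gives
\[
\U_{h_2}^\eps(t^\flat, t^\flat+\sigma)\wp^\eps_{z_1(\sigma)}\psi = e^{-iS_2(\sigma)/\eps}\,\wp^\eps_{z^\flat + z(\sigma)}\widetilde\psi + O(\eps),
\]
with $z^\flat + z(\sigma) := \Phi^{t^\flat, t^\flat+\sigma}_{h_2}(z_1(\sigma))$ and $\widetilde\psi$ the metaplectic image of $\psi$.

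To factor $\wp^\eps_{z^\flat}$ out of the integrand, I apply the translation identity
\[
\wp^\eps_{z^\flat + z(\sigma)}\psi(x) = e^{-i\xi^\flat\cdot q(\sigma)/\eps}\,\wp^\eps_{z^\flat}\!\big[e^{ip_\eps(\sigma)\cdot(y - q_\eps(\sigma))}\psi(y - q_\eps(\sigma))\big],
\]
and collect the remaining $\sigma$-dependent scalar phases into $\Lambda(\sigma) := S_1(\sigma) - S_2(\sigma) - \xi^\flat\cdot q(\sigma)$, which directly yields the integral representation of $\mathcal T^\eps\varphi_1(t^\flat)$ in the lemma. The values at $\sigma = 0$ are immediate, and the first- and second-order Taylor coefficients $\dot\Lambda(0) = 0$, $\ddot\Lambda(0) = 2\mu^\flat + \alpha^\flat\cdot\beta^\flat$, $\dot z(0) = (\alpha^\flat, \beta^\flat)$ will be obtained by a direct computation combining the time-derivatives of the actions $S_1$, $S_2$ with the recentering phase, using the crossing equality $h_1(t^\flat, z^\flat) = h_2(t^\flat, z^\flat) = v(t^\flat, z^\flat)$, the local relation $h_1 - h_2 = 2f$ near $(t^\flat, z^\flat)$, and Hamilton's equations for both eigenvalues.

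The error bookkeeping is the routine part: each application of Proposition~\ref{wpevol1} contributes an $O(\eps)$ remainder in $\Sigma^k_\eps$, and discarding the $\sqrt\eps$ Hagedorn subprincipal correction in the profile (to be recovered in the subsequent Lemma~\ref{lem:transap}) gives an $O(\sqrt\eps)$ discrepancy per $\sigma$; integrating over $\sigma\in[-\delta,\delta]$ produces the claimed global remainder $O(\sqrt\eps\,\delta)$. The main technical obstacle is the algebraic identification of the Taylor coefficients above: the exact cancellation producing $\dot\Lambda(0) = 0$ requires the recentering phase $-\xi^\flat\cdot q(\sigma)$ to match the first-order mismatch between the two action integrals along the concatenated $h_1$-then-$h_2^{-1}$ trajectory, and the precise coefficient $2\mu^\flat + \alpha^\flat\cdot\beta^\flat$ at second order arises from carefully tracking how $h_1$ and $h_2$ deviate from the common trace $v$ near the crossing, together with the transversality assumption~\eqref{hyp:transversality} which ensures $\ddot\Lambda(0)\neq 0$ and so qualifies $\mathcal T^\eps$ as a stationary-phase operator at scale $\sqrt\eps$, as needed in the next step.
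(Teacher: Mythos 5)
Your strategy is essentially the paper's: approximate the integrand of $A_\eps$ by wave packets via Proposition~\ref{wpevol1} (dropping the $\sqrt\eps$ subprincipal correction to get the $O(\sqrt\eps\,\delta)$ remainder after integration in $\sigma$), transfer the multiplier to the profile through the intertwining relation~\eqref{w:pseudo}, recenter at $z^\flat$ with the composition/translation properties of $\wp^\eps$, and defer the Taylor coefficients of $\Lambda$ and $z(\sigma)$ to a separate computation (the paper's Lemma~\ref{lem:phasis}). The one structural difference is the order of operations: the paper first applies the semi-classical Egorov theorem to move $\widehat{\gamma\vec V_2}(\sigma)$ to the left of $\U^\eps_{h_2}(t^\flat,\sigma)$, replacing its symbol by $(\gamma\vec V_2)(\sigma)\circ\Phi_2^{\sigma,t^\flat}$, and only then applies Proposition~\ref{wpevol1} to the composed propagator $\U^\eps_{h_2}\U^\eps_{h_1}$ acting on the wave packet; you keep the multiplier sandwiched between the two propagations and apply Proposition~\ref{wpevol1} twice around it. Both work; your version costs you the mild extra observation that the error constant in Proposition~\ref{wpevol1} for the backward $h_2$-step must be uniform over the $\sigma$- and $\eps$-dependent profiles produced by the first two steps (it is, since it only depends on finitely many Schwartz seminorms, which are uniformly bounded here), while the paper's version pays the $O(\eps)$ Egorov error once up front.

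The one step you omit, and which is genuinely needed to reach the operator ordering stated in the lemma, is the commutation of the unit-scale translation operator through the metaplectic transforms and through your $\mathcal Q^\eps_{\rm student}(\sigma)$. After your recentering identity the translation sits to the \emph{left} of $\mathcal M_2(\sigma)\,\mathcal Q^\eps_{\rm student}(\sigma)\,\mathcal M_1(\sigma)$, whereas in ${\mathcal T}^\eps$ the translation acts \emph{first} and ${\mathcal Q}^\eps(\sigma)$ last. Pushing the translation to the right through the metaplectic factors replaces the center $z(\sigma)$ by $\widetilde z(\sigma)=F_1(\sigma)^{-1}F_2(\sigma)^{-1}z(\sigma)$ (harmless, since $\widetilde z(0)=0$ and $\dot{\widetilde z}(0)=\dot z(0)=(\alpha^\flat,\beta^\flat)$) and produces additional scalar phases of order $O(\sigma^3)$ that do not affect $\ddot\Lambda(0)$; the metaplectic factors must then be absorbed into the final ${\mathcal Q}^\eps(\sigma)$, which is legitimate because the lemma only prescribes its value at $\sigma=0$ and $\mathcal M(0)=\1$. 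Without this step your candidate ${\mathcal Q}^\eps(\sigma)$ and center do not literally produce the integral formula claimed; with it, your argument closes and agrees with the paper's.
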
 

\begin{proof}
We use Egorov's semi-classical theorem \cite[Theorem~12]{CombescureRobert} and obtain 
that in $\Sigma^k_\eps(\R^d)$, 
\[
\U_{h_2}^\eps(t^\flat,\sigma) \widehat {\gamma\vec V_2}(\sigma)f = \op^w_\eps( (\gamma\vec V_2)(\sigma)\circ\Phi^{\sigma,t^\flat}_2 )\,\U_{h_2}^\eps(t^\flat,\sigma)f+ O(\eps)
\]
for all $f\in\bigcap_{\ell\ge k}\Sigma^\ell_\eps(\R^d)$. Hence,
\[
A_\eps = 
 \int_{t^\flat-\delta }^{t^\flat+\delta} \op^w_\eps((\gamma\vec V_2)(\sigma)\circ\Phi^{\sigma,t^\flat}_2)\,
\U_{h_2}^\eps(t^\flat, \sigma)\,\U_{h_1}^\eps(\sigma,t^\flat)
\,\wp_{z^\flat}^\eps \varphi_1(t^\flat)d\sigma+O(\delta\eps).
\]
We set 
$
\vec Q_2(\sigma)= (\gamma\vec V_2)(t^\flat+\sigma)\circ\Phi^{\sigma+t^\flat,t^\flat}_2, 
$
and note that $\vec Q_2(0) =  (\gamma\vec V_2)(t^\flat)$. We get after a change of variables 
\begin{align*}
A_\eps &= 
 \int_{-\delta }^{\delta} \widehat{\vec Q_2}(\sigma)\,
\U_{h_2}^\eps(t^\flat, t^\flat+\sigma)\U_{h_1}^\eps(t^\flat+\sigma,t^\flat)
\wp_{z^\flat}^\eps \varphi_1(t^\flat)d\sigma+O(\delta\eps).
\end{align*}
Now we apply successively Proposition \ref{wpevol1} to the evolutions ${\mathcal U}_{h_1}^\eps$ and ${\mathcal U}_{h_2}^\eps$ without encorporating the first amplitude correction, that is, for a basic approximation of 
order $\sqrt\eps$. We obtain
\[
\U_{h_2}^\eps(t^\flat, t^\flat+\sigma)\U_{h_1}^\eps(t^\flat+\sigma,t^\flat)
\wp_{z^\flat}^\eps \varphi_1(t^\flat) = 
{\rm e}^{\frac{i}{\eps}S(\sigma)}
\wp_{\zeta(\sigma)}^\eps {\mathcal M}(\sigma)\varphi_1(t^\flat) + O(\sqrt\eps),
\]
where we  denoted the combined center, phase and metaplectic transform by
\begin{align*}
\zeta(\sigma) &=\Phi_2^{t^\flat,t^\flat+ \sigma}\big(\Phi_1^{t^\flat+\sigma,t^\flat}(z^\flat)\big),\\
S(\sigma) &= 
S_1(t^\flat+\sigma,t^\flat, z^\flat) +S_2(t^\flat,t^\flat+\sigma,\Phi_1^{t^\flat+\sigma, t^\flat}(z^\flat)),\\
{\mathcal M}(\sigma) &= {\mathcal M}[F_2(t^\flat,t^\flat+\sigma,\Phi_1^{t^\flat+\sigma, t^\flat}(z^\flat))] 
\,{\mathcal M}[F_1(t^\flat+\sigma,t^\flat,z^\flat)].
\end{align*}
This implies
\[
A_\eps =   \int_{-\delta}^{+\delta} \widehat{\vec Q_2}(\sigma)
{\rm e}^{\frac{i}{\eps}S(\sigma)}
\wp_{\zeta(\sigma)}^\eps {\mathcal M}(\sigma)\varphi_1(t^\flat) d\sigma+ O(\delta \sqrt \eps). 
\]
We observe that 
\[
\zeta(0) = z^\flat,\quad S(0) = 0, \quad {\mathcal M}(0) = \1,
\]
and write $\zeta(\sigma) = z^\flat + z(\sigma)$ with $z(0) = 0$. By Lemma~\ref{lem:phasis},
\[
\dot z(0) = (\alpha^\flat,\beta^\flat),
\quad
\dot S(0) = p^\flat \cdot\alpha^\flat.
\] 
Moreover, using the group and translation properties of the 
wave packet transform \eqref{comWP} and \eqref{transWP}, we have
\begin{align*}
\wp^\eps_{\zeta(\sigma)} &= {\rm e}^{-\frac{i}{\eps} p^\flat\cdot q(\sigma)}
 \wp^\eps_{z^\flat}\Lambda_\eps^{-1} \wp^\eps_{z(\sigma)}\\*[1ex]
 &=  {\rm e}^{-\frac{i}{\eps} p^\flat\cdot q(\sigma)}
 {\rm e}^{-\frac{i}{2\eps}p(\sigma)\cdot q(\sigma)} \wp^\eps_{z^\flat} 
\Lambda_\eps^{-1} \widehat T^\eps(z(\sigma)) \Lambda_\eps\\*[1ex]
 &=  {\rm e}^{-\frac{i}{\eps} p^\flat\cdot q(\sigma)}
 {\rm e}^{-\frac{i}{2\eps}p(\sigma)\cdot q(\sigma)} \wp^\eps_{z^\flat} 
\widehat T^1(z_\eps(\sigma)), 
\end{align*}
By the translation properties 
of the metaplectic transform \cite[Section~3.3]{CombescureRobert}, we have
\[
\widehat T^1(z_\eps(\sigma)) {\mathcal M}(\sigma)=
 {\mathcal M}(\sigma)\widehat T^1(\widetilde z_\eps(\sigma)) 
\]
with new center
\[
\widetilde z(\sigma) = F_1(t^\flat+\sigma,t^\flat,z^\flat)^{-1} 
F_2(t^\flat,t^\flat+\sigma,\Phi_1^{t^\flat+\sigma, t^\flat}(z^\flat))^{-1} z(\sigma)
\]
We observe that 
\[
\widetilde z(0) = z(0) = 0,\quad \dot{\widetilde z}(0) = \dot z(0) = (\alpha^\flat,\beta^\flat).
\]  
Moreover, in view of the relation~\eqref{w:pseudo}, 
\begin{align*}
\widehat{\vec Q_2}(\sigma)\wp_{\zeta(\sigma)}^\eps {\mathcal M}(\sigma) 
&= 
{\rm e}^{-\frac{i}{\eps} p^\flat\cdot q(\sigma)}
 {\rm e}^{-\frac{i}{2\eps}p(\sigma)\cdot q(\sigma)} \widehat{\vec Q_2}(\sigma) \wp^\eps_{z^\flat} 
 {\mathcal M}(\sigma)\widehat T^1(\widetilde z_\eps(\sigma))\\*[1ex]
 &= 
{\rm e}^{-\frac{i}{\eps} p^\flat\cdot q(\sigma)}
 {\rm e}^{-\frac{i}{2\eps}p(\sigma)\cdot q(\sigma)} 
 \wp^\eps_{z^\flat} \op_1^w(\vec Q_2(\sigma,z^\flat+\sqrt\eps\bullet))
 {\mathcal M}(\sigma)\widehat T^1(\widetilde z_\eps(\sigma)). 
\end{align*}
Since
\[
\widehat T^1(\widetilde z_\eps(\sigma))\varphi_1(t^\flat,y) = 
{\rm e}^{\frac{i}{2}\widetilde q_\eps(\sigma)\cdot\widetilde p_\eps(\sigma)}
{\rm e}^{i \widetilde p_\eps(\sigma)\cdot (y-\widetilde q_\eps(\sigma))} \varphi_1(t^\flat,y-\widetilde q_\eps(\sigma)),
\]
we may introduce the phase $\widetilde\Lambda(\sigma)$ and the operator ${\mathcal Q}^\eps(\sigma)$ acccording to
\begin{align}\nonumber
\widetilde\Lambda(\sigma) &= S(\sigma) - p^\flat\cdot q(\sigma) - p(\sigma)\cdot q(\sigma) + 
\widetilde p(\sigma)\cdot \widetilde q(\sigma),\\\label{def:Qeps(sigma)}
{\mathcal Q}^\eps(\sigma) &= 
\op_1^w(\vec Q_2(\sigma,z^\flat+\sqrt\eps\bullet)) {\mathcal M}(\sigma),
\end{align}
to obtain the approximation
\[
A_\eps =   \wp^\eps_{z^\flat} \int_{-\delta}^{+\delta} 
{\rm e}^{\frac{i}{\eps} \widetilde\Lambda(\sigma)} 
{\mathcal Q}^\eps(\sigma) 
{\rm e}^{i \widetilde p_\eps(\sigma)\cdot (y-\widetilde q_\eps(\sigma))} \varphi_1(t^\flat,y-\widetilde q_\eps(\sigma)) \,d\sigma + O(\delta\sqrt\eps).
\]
We clearly have $\widetilde\Lambda(0) = \dot{\widetilde\Lambda}(0) = 0$ and 
${\mathcal Q}^\eps(0) = \op_1^w((\gamma\vec V_2)(t^\flat,z^\flat+\sqrt\eps\bullet))$, whereas, by Lemma~\ref{lem:phasis},
\[
\ddot{\widetilde\Lambda}(0) = \ddot S(0) - p^\flat\cdot \ddot q(0) = 
2\mu^\flat+ \alpha^\flat\cdot\beta^\flat.
\]
\end{proof}

\begin{remark}\label{generalisation15}
Note that the first step of the proof of Lemma~\ref{lem:trans} can be performed at any order in $\eps$ with a remainder of the form $O(\delta \eps^N)$: pushing the Egorov theorem at higher order, we obtain 
$$A_\eps = 
 \int_{t^\flat-\delta }^{t^\flat+\delta} \widehat{\vec Q^{\eps,N}_2}(\sigma)\,
\U_{h_2}^\eps(t^\flat, \sigma)\U_{h_1}^\eps(\sigma,t^\flat)
\wp_{z^\flat}^\eps \varphi_1(t^\flat)d\sigma+O(\delta\eps^{N+1})$$
$$\mbox{with}\;\;\vec Q_2^{\eps,N}= \vec Q_2 + \eps \vec Q_{2}^{(1)}+\cdots +\eps^N \vec Q_{2}^{(N)}.$$
Similarly, also Proposition~\ref{wpevol1} can be generalized at any order in $\eps$, which then implies
  $$A_\eps=  
 \wp^\eps_{z^\flat} \,{\mathcal T}^{\eps,N} \varphi_1^\eps(t^\flat) +O(\eps^{N/2+1} \delta)$$
 where 
 $\displaystyle{\varphi_1^\eps=\varphi_1+\sqrt\eps \varphi_1^{(1)} + \cdots +\eps^{N/2} \varphi_1^{(N)}}$
 and 
 $${\mathcal T}^{\eps,N}\varphi (y)=\int_{-\delta}^{+\delta}{\rm e}^{\frac{i}{\eps}
\Lambda(\sigma)} {\mathcal Q}^{\eps,N}(\sigma) 
{\rm e}^{
i (y-q_\eps(\sigma)))\cdot p_\eps(\sigma)}
 \varphi(y-q_\eps(\sigma)) d\sigma$$
 for all $\varphi\in{\mathcal S}(\R^d)$. The phase function $\Lambda(\sigma)$ and the phase space center $z(\sigma)$ stay 
 the same as in Lemma~\ref{lem:trans}, while 
 the operator ${\mathcal Q}^{\eps,N}(\sigma)$ is associated with $\vec Q_2^{\eps, N}(\sigma)$ 
 according to~\eqref{def:Qeps(sigma)} by selecting terms up to order $\eps^{N/2}$ in its definition.  
\end{remark}

\subsection{The transfer operator }\label{sec:trsfop}

Consider the family of operators 
\begin{eqnarray*}
{\mathcal T}^\eps\varphi (y)&=& \int_{-\delta}^{+\delta}{\rm e}^{\frac{i}{\eps}
\Lambda(\sigma)}
 {\mathcal Q}^\eps(\sigma) {\rm e}^{ 
i (y-q_\eps(\sigma)))\cdot p_\eps(\sigma)}
 \varphi(y-q_\eps(\sigma)) d\sigma,\quad  \varphi\in{\mathcal S}(\R^d),
\end{eqnarray*}
as introduced in Lemma~\ref{lem:trans}. 
We next describe such an operator~${\mathcal T}^\eps$, when $\eps$ goes to $0$.

\begin{lemma}\label{lem:transap}
Let $k\in\N$.  If $\sqrt\eps \ll\delta\ll 1$, then for all $\varphi\in{\mathcal S}(\R^d)$, 
 \bea\label{transap} 
{\mathcal T}^\eps\varphi = \sqrt\eps \,\mathcal{Q}^\eps(0) \mathcal T^\flat\varphi + O(\sqrt \eps \delta) + O(\eps\delta^{-1})
\eea
in $\Sigma^k_\eps(\R^d)$ with
$\displaystyle{
\mathcal T^\flat = \int_{-\infty}^{+\infty}  {\rm e}^{i\mu^\flat s^2  }  {\rm e}^{is(\beta^\flat\cdot y-\alpha ^\flat \cdot D_y)} \,ds
}$
  \end{lemma}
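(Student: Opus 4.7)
The natural rescaling is $\sigma = \sqrt\eps s$, which matches the $1/\sqrt\eps$ scaling built into $q_\eps = q/\sqrt\eps$ and $p_\eps = p/\sqrt\eps$. The change of variables recasts $\mathcal{T}^\eps\varphi(y)$ as
\begin{equation*}
\sqrt\eps \int_{-\delta/\sqrt\eps}^{\delta/\sqrt\eps}
e^{\frac{i}{\eps}\Lambda(\sqrt\eps s)}\,
\mathcal Q^\eps(\sqrt\eps s)\,
e^{i(y - q(\sqrt\eps s)/\sqrt\eps)\cdot p(\sqrt\eps s)/\sqrt\eps}\,
\varphi\!\left(y - q(\sqrt\eps s)/\sqrt\eps\right) ds,
\end{equation*}
with a range of integration that grows to $\R$ under the assumption $\sqrt\eps \ll \delta$. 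Taylor-expanding at $\sigma = 0$, using the vanishing/normalizing data furnished by Lemma~\ref{lem:trans}, yields
\begin{equation*}
\tfrac{1}{\eps}\Lambda(\sqrt\eps s) = (\mu^\flat + \tfrac12\alpha^\flat\cdot\beta^\flat)\,s^2 + O(\sqrt\eps |s|^3),\quad
q(\sqrt\eps s)/\sqrt\eps = s\alpha^\flat + O(\sqrt\eps s^2),
\end{equation*}
with analogous expansions for $p(\sqrt\eps s)/\sqrt\eps$ and $\mathcal Q^\eps(\sqrt\eps s) = \mathcal Q^\eps(0) + O(\sqrt\eps |s|)$. Substituting the leading-order values, the algebraic identity $(y - s\alpha^\flat)\cdot s\beta^\flat = s\beta^\flat\cdot y - s^2\alpha^\flat\cdot\beta^\flat$ and the representation~\eqref{transf1} together identify the leading contribution as $\sqrt\eps\, \mathcal Q^\eps(0)\mathcal T^\flat\varphi$.

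It then remains to estimate two error sources: truncation of the limiting integral from $\R$ to $[-\delta/\sqrt\eps, \delta/\sqrt\eps]$, and Taylor remainders on this compact window. Both are controlled by the quadratic oscillation $e^{i\mu^\flat s^2}$ via the integration-by-parts identity $s\, e^{i\mu^\flat s^2} = (2i\mu^\flat)^{-1}\partial_s e^{i\mu^\flat s^2}$, which requires $\mu^\flat\neq 0$ and is thus guaranteed by Assumption~\ref{hyp:codim1}. For the truncation error, a single integration by parts produces a boundary contribution at $|s| = \delta/\sqrt\eps$ of order $\sqrt\eps/\delta$, hence an error of $O(\eps/\delta)$ once the Jacobian prefactor $\sqrt\eps$ is restored. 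The corresponding estimate in $\Sigma^k_\eps$ is preserved because $y$-derivatives of the integrand produce only polynomials in $s\beta^\flat$, which are absorbed by further integrations by parts.

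The main technical obstacle is the Taylor remainder on the compact window. Although each remainder term carries an extra prefactor $\sqrt\eps$, it is also multiplied by polynomials in $s$ that can be as large as $(\delta/\sqrt\eps)^3$, so a naive pointwise estimate is inadequate. The resolution is to repeatedly apply the integration-by-parts identity above, trading each power of $s$ for differentiation of the slowly varying cofactor $\mathcal Q^\eps(\sqrt\eps s)\, e^{i(y-\cdot)\cdot\cdot}\varphi(y-\cdot)$. Once the polynomial growth is exhausted, the residual interior integral is of order $O(1)$ and the boundary terms at $|s| = \delta/\sqrt\eps$ are of order $\sqrt\eps$; restoring the Jacobian prefactor yields $O(\sqrt\eps\, \delta)$ in every $\Sigma^k_\eps$-norm. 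Careful bookkeeping is required to track the extra powers of $s$ generated by the $y^a(\eps\partial_y)^b$ acting on $e^{is\beta^\flat\cdot y}$ and on $\varphi(y-s\alpha^\flat)$, but this is routine once the preceding argument is systematized. The two error regimes balance at $\sqrt\eps\,\delta \sim \eps/\delta$, i.e.\ $\delta \sim \eps^{1/4}$, which in turn explains why both bounds appear in the statement.
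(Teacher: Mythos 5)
Your overall strategy coincides with the paper's: rescale $\sigma=\sqrt\eps s$, Taylor-expand phase, center and $\mathcal Q^\eps$ at $0$, identify the leading term via \eqref{transf1}, and control both the Taylor remainder and the tail $|s|\ge\delta/\sqrt\eps$ by non-stationary-phase integration by parts based on $s\,e^{i\mu^\flat s^2}=(2i\mu^\flat)^{-1}\partial_s e^{i\mu^\flat s^2}$. The tail estimate is essentially right, except that a single integration by parts leaves an interior integral containing $\int_{m_\eps}^{\infty} s^{-1}\,\e^{isL}L\varphi\,ds$, which is not absolutely convergent; the paper needs two successive integrations by parts to reach $\int s^{-2}=O(m_\eps^{-1})$.

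The genuine gap is in the treatment of the cubic corrections on the window $|s|\le\delta/\sqrt\eps$. The error $O(\sqrt\eps|s|^3)$ sits \emph{in the exponent}, and under the lemma's hypothesis ($\delta\ll1$ only, not $\delta\ll\eps^{1/3}$) it is of size $\delta^3/\eps$, which can be large; you therefore cannot linearize $\e^{i\sqrt\eps s^3 f}$ and treat it as a multiplicative remainder. Moreover, even where linearization is legitimate, your scheme of "trading each power of $s$ for differentiation of the slowly varying cofactor" does not terminate: differentiating the residual exponential factor brings down $3\sqrt\eps s^2 f+O(\eps s^3)$, i.e.\ each integration by parts removes one power of $s$ but can reintroduce $\sqrt\eps s^2\sim\delta^2/\sqrt\eps$, so the "polynomial growth" is not exhausted. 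Your final accounting also does not produce the claimed bound (an $O(1)$ interior integral times the Jacobian $\sqrt\eps$ gives $O(\sqrt\eps)$, not $O(\sqrt\eps\delta)$; the correct count is prefactor $\eps$ times a boundary factor $s|_{s=\delta/\sqrt\eps}$). The paper resolves all of this by (i) a Baker--Campbell--Hausdorff factorization of $\e^{iL^\eps(s)}$ isolating $\e^{isL}$, (ii) integrating by parts against the \emph{full} phase derivative $2\mu^\flat s(1+s\sqrt\eps f_4)$, using that $1+s\sqrt\eps f_4>1/2$ on the window since $\delta\ll1$, and (iii) the change of variables $z=s(1+\sqrt\eps s f_3/\mu^\flat)^{1/2}$ which renders the phase exactly quadratic before extracting $\mathcal Q^\eps(0)$ and $\e^{isL}$. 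Some device of this kind is needed; without it the argument does not close. The closing remark about balancing at $\delta\sim\eps^{1/4}$ is also off target: the eventual choice $\delta=\eps^{2/9}$ balances these errors against the $\eps\delta^{-2}$ adiabatic error from Proposition~\ref{prop:propagationt**}, not one term of \eqref{transap} against the other.
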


\begin{proof}
The proof relies on the analysis of the integrand close to $\sigma=0$. 
  We write 
 $$
{\mathcal T}^\eps =\sqrt\eps  \int_{-\delta/\sqrt\eps }^{+\delta/\sqrt\eps }{\rm e}^{\frac{i}{\eps}
\Lambda(\sqrt \eps s) -\frac i 2 q_\eps(s\sqrt\eps) \cdot p_\eps (s\sqrt\eps) } {\mathcal Q}^\eps (s\sqrt\eps ) 
{\rm e}^{i L^\eps (s) }ds 
$$
where
$\displaystyle{L^\eps (s) :=   p_\eps(s\sqrt\eps)\cdot y -p_\eps(s\sqrt\eps) D_y }$
defines a family of self-adjoint operators $s \mapsto L^\eps(s)$ mappping $\mathcal S(\R^d)$ into itself.
Recall that the functions $s\mapsto p_\eps(s\sqrt\eps)$ and $s\mapsto q_\eps(s\sqrt\eps)$ are uniformly bounded with respect to $\eps$, and that $q(0) = p(0) = 0$, while  
\begin{equation}\label{def:mu}
\mu^\flat=\frac12 \left(\ddot\Lambda(0)- \dot q(0) \cdot \dot p(0)\right),\;\;\alpha^\flat=\dot q(0),\;\;\beta^\flat=\dot p(0).
\end{equation}
We set 
$L= \beta^\flat\cdot y-\alpha ^\flat \cdot D_y.$
Using Taylor expansion in $s=0$, we obtain 
$$\Lambda(\sqrt \eps s) -\frac i 2 q_\eps(s\sqrt\eps)\cdot p_\eps(s\sqrt\eps) = \mu^\flat s^2 + \sqrt \eps s^3 f_1(s\sqrt\eps) $$
 with $\sigma \mapsto f_1(\sigma)$ bounded, together with its derivatives, for $\sigma\in [t_0,t_0+T]$. In the following, the notation $f_j$ will denote functions that have the same property. 
 We also have 
$$L^\eps(s) = sL +\sqrt\eps s^2 L^\eps_1(s\sqrt\eps )$$
where the family of operator $\sigma \mapsto L^\eps_1(\sigma)$ maps $\mathcal S(\R^d)$ into itself, for $\sigma\in [t_0,t_0+T]$. Besides, the commutator $[ L, L_1(s\sqrt\eps)]$ is a scalar, and we set
$$\frac 12[ L, L_1(s\sqrt\eps)] =  f_2(s\sqrt\eps)$$
  with the notation we have just introduced. 
  Therefore, by Baker-Campbell-Hausdorff formula
  $${\rm e}^{iL^\eps(s)} = {\rm e}^{isL} {\rm e}^{is^2\sqrt\eps L_1(s\sqrt\eps)}{ \rm e} ^{i  \sqrt\eps s^3 f_2(s\sqrt\eps)}.$$
  Besides,
  $${\rm e}^{i\sqrt\eps s^2 L_1(s\sqrt\eps)} ={\rm Id} + \sqrt\eps s^2 \Theta(s\sqrt\eps)$$
  where the operator-valued map $ \sigma \mapsto \Theta(\sigma)$ is smooth and such that  for all $\sigma\in [t_0,t_0+T]$, the operator $\Theta(\sigma)$ and its derivatives maps $\mathcal S(\R^d)$ into itself.
  Setting $f_3=f_1+f_2$, we deduce that ${\mathcal T}^\eps$ writes 
   \begin{align*}
&{\mathcal T}^\eps=\sqrt\eps  \int_{-\delta/\sqrt\eps }^{+\delta/\sqrt\eps }
{\rm e}^{i\mu^\flat s^2 + \sqrt\eps s^3 f_3(s\sqrt\eps)}  {\mathcal Q}^\eps(s\sqrt\eps ) 
{\rm e}^{i sL  }  ds + R^{\eps,\delta}\\
\mbox{with}\;\;&R^{\eps,\delta}= \eps  \int_{-\delta/\sqrt\eps }^{+\delta/\sqrt\eps }
{\rm e}^{i\mu^\flat s^2 + \sqrt\eps s^3 f_3(s\sqrt\eps)} 
{\mathcal Q} (s\sqrt\eps ) 
{\rm e}^{i sL  }  
s^2 \Theta^\eps(s\sqrt\eps)ds.
\end{align*}
Let us analyze $R^{\eps,\delta}$. For this, we perform an integration by parts. Indeed,  
 $$\partial_s ( \mu^\flat s^2 +\sqrt\eps s^3 f_3(s\sqrt\eps))= 2 \mu^\flat s ( 1+ s\sqrt\eps f_4(s\sqrt\eps))$$
 for some smooth bounded function $f_4$ with bounded derivatives. Moreover, since~$\delta$ is small, 
we have  $1+s\sqrt\eps f_4(s\sqrt\eps) > 1/2$
 for all $s\in]-\delta/\sqrt\eps,+\delta/\sqrt\eps[$. 
 Therefore, we can write 
 \begin{align*}
 R^{\eps,\delta} = &  \left[ \frac {\eps s}{2i\mu^\flat ( 1+ s\sqrt\eps f_4(s\sqrt\eps))}
  {\rm e}^{i\mu^\flat s^2 +i\sqrt\eps s^3 f_3(s\sqrt\eps)}  {\mathcal Q}^\eps (s\sqrt\eps )  {\rm e}^{isL}    \right]_{-\frac\delta{\sqrt\eps}}^{+\frac\delta{\sqrt\eps}} \\
 &\qquad
- \frac{\eps}{2i\mu^\flat} \int_{-\frac{\delta}{\sqrt\eps}}^{+\frac\delta{\sqrt\eps}} 
 {\rm e}^{i\mu^\flat s^2+i\sqrt\eps s^3 f_3(s\sqrt\eps)}
 \frac{d}{ds}\left(\frac{s} {1+ s\sqrt\eps f_4(s\sqrt\eps)}  {\mathcal Q}^\eps (s\sqrt\eps ) \e ^{isL} \right) 
ds,
 \end{align*}
where $\mu^\flat\neq0$ by the transversality condition \eqref{hyp:transversality}.
 We deduce that for all $k\in\N$ and $\varphi\in {\mathcal S}(\R^d)$, we have in $\Sigma^k_\eps(\R^d)$ that
 $R^{\eps,\delta} \varphi= O(\sqrt \eps\delta) + R^{\eps,\delta}_1\varphi$
 with 
 $$R^{\eps,\delta}_1\varphi=
 - \frac{\eps}{2i\mu^\flat} \int_{-\frac{\delta}{\sqrt\eps}}^{+\frac\delta{\sqrt\eps}} 
 {\rm e}^{i\mu^\flat s^2+i\sqrt\eps s^3 f_3(s\sqrt\eps)}
 \left(\frac{s} {1+ s\sqrt\eps f_4(s\sqrt\eps)} {\mathcal Q}^\eps (s\sqrt\eps )  \e ^{isL} L\varphi \right) 
ds.
$$
We then need another integration by parts to obtain that 
$R^{\eps,\delta}_1 \varphi= O(\sqrt \eps\delta) $.  Note that this additional integration by parts is required by the presence of a $s$ without a coefficient $\sqrt\eps$ in  the integrand. We write 
\begin{align*}
R^{\eps,\delta}_1\varphi&=
 - \frac{\eps}{(2i\mu^\flat)^2} \left[
 {\rm e}^{i\mu^\flat s^2+i\sqrt\eps s^3 f_3(s\sqrt\eps)}
 \left(\frac{1} {(1+ s\sqrt\eps f_4(s\sqrt\eps))^2} {\mathcal Q}^\eps (s\sqrt\eps )  \e ^{isL} L\varphi \right) \right]_{-\frac{\delta}{\sqrt\eps}}^{+\frac\delta{\sqrt\eps}} \\
&\;\;\;\; + 
\frac{\eps}{(2i\mu^\flat)^2} \int_{-\frac{\delta}{\sqrt\eps}}^{+\frac\delta{\sqrt\eps}} 
 {\rm e}^{i\mu^\flat s^2+i\sqrt\eps s^3 f_3(s\sqrt\eps)} \frac d {ds}
 \left(\frac{1} {(1+ s\sqrt\eps f_4(s\sqrt\eps))^2} {\mathcal Q}^\eps(s\sqrt\eps )  \e ^{isL} L\varphi \right) 
ds\\
&= O(\delta\sqrt\eps)
\end{align*} 
Therefore, we are left with 
\[
{\mathcal T}^\eps= \sqrt\eps \int_{-\frac\delta{\sqrt\eps}}^{+\frac\delta{\sqrt\eps}}  {\rm e}^{i\mu^\flat s^2  +i\sqrt\eps s^3 f_3(s\sqrt\eps)} {\mathcal Q}^\eps (s\sqrt\eps )  {\rm e}^{isL}  \,ds + O(\sqrt\eps\delta ) .
\]
In the positive part of the integral, we perform the change of variable 
\[
z= s(1+\sqrt\eps s f_3(s\sqrt\eps)/\mu^\flat)^{1/2}
\] 
and observe that 
$s=z(1+\sqrt\eps z g_1(z\sqrt\eps) ) \;\;\mbox{and} \;\; \partial_s z = 1+\sqrt\eps z g_2( z\sqrt\eps)$
for some smooth bounded functions $g_1$ and $g_2$ with bounded derivatives. Note, that here again, we have used that $s\sqrt\eps$ is small in the domain of the integral. 
Besides, there exists a family of operator $\widetilde {\mathcal Q}^\eps (z)$ such that 
${\mathcal Q}^\eps (s\sqrt\eps ) = \widetilde {\mathcal Q}^\eps (z\sqrt\eps)$
with $\widetilde {\mathcal Q}^\eps(0)= {\mathcal Q}^\eps(0)$. 
We deduce that there exists a bounded function of $\delta$ denoted by $b(\delta)$ such that 
\begin{align*}
%&\sqrt\eps \int_{-\frac\delta{\sqrt\eps}}^{+\frac\delta{\sqrt\eps}}  {\rm e}^{i\mu^\flat s^2  +i\sqrt\eps s^3 f_1(s\sqrt\eps)} {\mathcal Q}^\eps (s\sqrt\eps ) {\rm e}^{isL} \ \,ds\\
%&=
\mathcal T^\eps = \sqrt\eps \int_{-\frac\delta{\sqrt\eps}}^{+\frac\delta{\sqrt\eps} b(\delta)}  {\rm e}^{i\mu^\flat z^2 } \widetilde {\mathcal Q}^\eps (z\sqrt\eps ) {\rm e}^{iz(1+\sqrt\eps z g_1(z\sqrt\eps))\, L} \frac{dz}{1+\sqrt\eps z g_2( z\sqrt\eps)}.
\end{align*}
A Taylor expansion allows to write 
\begin{align*}
\widetilde {\mathcal Q}^\eps (z\sqrt\eps ) {\rm e}^{iz(1+\sqrt\eps z g_1(z\sqrt\eps))\, L} \frac{1}{1+\sqrt\eps z g_2( z\sqrt\eps)}& =
\widetilde {\mathcal Q}^\eps(0)+ \sqrt\eps  z (\widetilde {\mathcal Q}^\eps_1(z\sqrt\eps)+ z \widetilde {\mathcal Q}^\eps_2(z\sqrt\eps))\\
&= {\mathcal Q}^\eps(0)+ \sqrt\eps  z (\widetilde {\mathcal Q}^\eps_1(z\sqrt\eps)+ z \widetilde {\mathcal Q}^\eps_2(z\sqrt\eps))
\end{align*}
for some smooth operator-valued maps $z\mapsto \widetilde {\mathcal Q}^\eps_j(z\sqrt\eps)$ mapping $\mathcal S(\R^d)$ into itself, such that for all $\varphi\in{\mathcal S}(\R^d)$ the family 
$({\widetilde {\mathcal Q}^\eps_j(z\sqrt\eps)}\varphi)_{\eps>0}$ is bounded in $\Sigma^k_\eps(\R^d)$.
We obtain 
\begin{align*}
%\sqrt\eps\int_{-\frac\delta{\sqrt\eps}}^{+\frac\delta{\sqrt\eps}}  {\rm e}^{i\mu^\flat s^2  +i\sqrt\eps s^3 f_1(s\sqrt\eps)} {\mathcal Q}^\eps (s\sqrt\eps ) {\rm e}^{isL} \ \,ds=
&\mathcal T^\eps = 
\sqrt\eps \;  {\mathcal Q}^\eps (0) \int_{-\frac\delta{\sqrt\eps}}^{+\frac\delta{\sqrt\eps} b(\delta)}  {\rm e}^{i\mu^\flat z^2 }  {\rm e}^{izL} dz + \tilde R^{\eps,\delta}\\
\mbox{with}\;\;&
\tilde R^{\eps,\delta} = \eps 
\int_{-\frac\delta{\sqrt\eps}}^{+\frac\delta{\sqrt\eps} b(\delta)}   z \, {\rm e}^{i\mu^\flat z^2 } (\widetilde {\mathcal Q}^\eps_1 (z\sqrt\eps ) +z\widetilde {\mathcal Q}^\eps_2(z\sqrt\eps) )\, dz.
\end{align*}
Arguing by integration by parts as previously, we obtain 
\begin{align*}
\tilde R^{\eps,\delta} =& \eps \left[ \frac{1}{2i\mu^\flat} 
   {\rm e}^{i\mu^\flat z^2 } (\widetilde {\mathcal Q}^\eps_1 (z\sqrt\eps ) +z\widetilde {\mathcal Q}^\eps_2(z\sqrt\eps) )\right]_{-\frac\delta{\sqrt\eps}}^{+\frac\delta{\sqrt\eps} b(\delta)} \\
 &\;\;  -  \frac{\eps}{2i\mu^\flat} 
\int_{-\frac\delta{\sqrt\eps}}^{+\frac\delta{\sqrt\eps} b(\delta)}   \, {\rm e}^{i\mu^\flat z^2 }\frac{d}{dz} (\widetilde {\mathcal Q}^\eps_1 (z\sqrt\eps ) +z\widetilde{\mathcal Q}^\eps_2(z\sqrt\eps) )\, dz
=O(\sqrt\eps \delta) .
\end{align*}
We deduce 
%$$\sqrt\eps \int_{-\frac\delta{\sqrt\eps}}^{+\frac\delta{\sqrt\eps}}  {\rm e}^{i\mu^\flat s^2  +i\sqrt\eps s^3 f_1(s\sqrt\eps)} {\mathcal Q}^\eps (s\sqrt\eps ) {\rm e}^{isL} \,ds=\sqrt\eps\, {\mathcal Q}^\eps (0)
%\int_{-\frac\delta{\sqrt\eps}}^{+\frac\delta{\sqrt\eps} b(\delta)} {\rm e}^{i\mu^\flat z^2  }{\rm e}^{izL} \,dz+O(\sqrt\eps \delta).$$
%We are left with
$\displaystyle{
{\mathcal T}^\eps = \sqrt\eps\,  {\mathcal Q}^\eps(0  )\, \int_{-\frac\delta{\sqrt\eps} b(\delta)}^{+\frac\delta{\sqrt\eps}b(\delta)}  {\rm e}^{i\mu^\flat s^2  }  {\rm e}^{isL} \,ds + O(\sqrt\eps\delta ) }$
and it remains to pass to infinity in the domain of the integral. For this, we set $m_\eps=\frac\delta{\sqrt\eps}b(\delta)$ and consider for $\varphi\in{\mathcal S}(\R^d)$,
$$\mathcal G_0^\eps\varphi= \int_{m_\eps}^{+\infty}  {\rm e}^{i\mu^\flat s^2  }   {\rm e}^{isL} \varphi \,ds.$$
We make two successive integration by parts. We write in $\Sigma_k(\R^d)$, 
\begin{align*}
\mathcal G_0^\eps\varphi&= 
 \left[ (2is\mu^\flat )^{-1} {\rm e}^{i\mu^\flat s^2}   {\rm e}^{ isL} \varphi\right]_{m_\eps}^{+\infty} -
\int_{m_\eps }^{+\infty} 
{\rm e}^{i\mu^\flat s^2} 
 \frac{d}{ds} \left(    \frac{{\rm e}^{isL} \varphi}{2is\mu^\flat }\right) ds\\
 &= O( m_\eps^{-1}) \| \varphi\|_{\Sigma^{k}} 
 - \int_{m_\eps }^{+\infty} 
{\rm e}^{i\mu^\flat s^2} \frac{i{\rm e}^{isL} L\varphi}{2is\mu^\flat } ds
 +\int_{m_\eps }^{+\infty} 
{\rm e}^{i\mu^\flat s^2}  \frac{{\rm e}^{isL} \varphi}{2i\mu^\flat s^2 }ds\\
&= O( m_\eps^{-1}) \| \varphi\|_{\Sigma^{k}} - 
 \left[ (2is\mu^\flat )^{-2} {\rm e}^{i\mu^\flat s^2}   i{\rm e}^{ isL} L\varphi\right]_{m_\eps}^{+\infty}
+ \int_{m_\eps }^{+\infty} 
{\rm e}^{i\mu^\flat s^2} \frac{d}{ds}\left(\frac{i{\rm e}^{isL} L\varphi}{(2is\mu^\flat)^2 }\right) ds\\*[1ex]
&= O( m_\eps^{-1}) \left(\| \varphi\|_{\Sigma^{k}}  + \| L\varphi\|_{\Sigma^{k}} + \| L^2\varphi\|_{\Sigma^{k}}\right).  
\end{align*}
We deduce that
$\displaystyle{
{\mathcal T}^\eps = \sqrt\eps\,{\mathcal Q}^\eps(0)\, \int_{-\infty}^{+\infty}  {\rm e}^{i\mu^\flat s^2  } {\rm e}^{isL} \,ds + O(\sqrt\eps\delta ) +O(\eps \delta^{-1}).
}$
  \end{proof}

  \begin{remark} \label{generalisation17}
  %Note that the previous integration by parts have transformed the integrals to integrals of the same form multiplied by larger powers of $\eps$ up to the integrated terms. 
  %These latter terms are products of pseudodifferential operators by evolution operators ${\rm e}^{\pm i\frac \delta{\sqrt\eps} L}$. 
  %This implies that pushing the calculus at higher order, 
  Note that the previous remainder terms could again by transformed by integration by parts. This implies that $\mathcal T^\eps \varphi$ has an asymptotic expansion in $\sqrt\eps$ and $\delta$ at any order and each term of the expansion is a Schwartz function. 
  \end{remark}

  \subsection{Proof of Theorem~\ref{theo:WPcodim1} and Corollary~\ref{cor:gaussian}.}\label{subsec:proof}
We now complete the proof of Theorem~\ref{theo:WPcodim1}. We choose 
$\delta=\eps^{2/9}$, and $\eps$ is small enough so that $\eps\leq |t-t^\flat|^{9/2}$. Then, one has $|t-t^\flat|\geq\delta$. If $t\in [t_0, t^\flat-\delta]$, then  Proposition~\ref{prop:propagationt**}  gives the result. If $t\in [t^\flat +\delta, t_0+T]$, then one combines Proposition~\ref{prop:propagationt**} between times $s_1=t^\flat +\delta$ and $s_2=t$ with Proposition~\ref{prop:throughcrossing}. 
 In summary, we obtain an error estimate of order 
$\eps\delta^{-2} = \eps^{1/3}\delta =  \eps^{5/9}$.  

\medskip 

Corollary~\ref{cor:gaussian}  comes from Theorem~\ref{theo:WPcodim1} and point~(3) of Proposition~\ref{prop:T}.

 %%%%%%%%%%%%%%%%%%%%%%%%%%%%%%%%%%%%%%%%%%%%%%%%%%%%%%%%%%%%%%%%%%%%%%%%%%

 \appendix

\section{The wave packet transform}\label{appendix:wp}

We discuss here useful properties of the wave-packet transform. We define  the 
Weyl translation operator  $\widehat T^\eps$ 
\[
\widehat T^\eps(z) = {\rm e}^{\frac{i}{\eps}(p\cdot\widehat x-q\cdot\widehat \xi)}, \;\; z=(q,p)\in\R^{2d},
\]
 the semi-classical scaling operator $\Lambda_\eps$ 
\[
\Lambda_\eps\varphi(x) = \eps^{-d/4}\varphi\!\left(\tfrac{x}{\sqrt\eps}\right),
\qquad\varphi\in{\mathcal S}(\R^d),
\]
and we denote by $a_{\eps,z}\in{\mathcal C}^\infty(\R^{2d})$  the function 
 $a_{\eps,z}(w) = a(\sqrt\eps w+z)$, $w\in\R^{2d}$.

 \begin{lemma}\label{lem:coherent} The wave packet transform satisfies for all points $z,z'\in\R^{2d}$ and all smooth functions $a\in{\mathcal C}^\infty(\R^{2d})$
\begin{align}
 \wp^\eps_{z}  &= {\rm e}^{-\frac{i}{2\eps}p\cdot q} \, \hat T^\eps(z)\, \Lambda_\eps,\label{transWP}\\
\wp^\eps_{z+z^\prime}&= {\rm e}^{-\frac{i}{\eps} p\cdot q^\prime}\,\wp^\eps_z\,\Lambda_\eps^{-1} \wp^\eps_{z^\prime},\label{comWP}\\*[0.5em]\label{w:pseudo}
\op_\eps^w(a) \wp^\eps_z &= \wp^\eps_z\, \op_1^w(a_{\eps,z}),
 \end{align}
 \end{lemma}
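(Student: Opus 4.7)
My plan is to verify each identity in turn by reducing it to the action of the Heisenberg translation and the semi-classical scaling, relying on two standard building blocks: the Baker--Campbell--Hausdorff formula (since in all cases the commutators involved are scalar multiples of the identity) and the exact covariance of the Weyl calculus under these two transformations.

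For \eqref{transWP}, I would apply BCH to $A:=\tfrac{i}{\eps}p\cdot\widehat x$ and $B:=-\tfrac{i}{\eps}q\cdot\widehat\xi$. Since $[\widehat x_j,\widehat\xi_k]=i\eps\,\delta_{jk}$, the commutator $[A,B]=\tfrac{i}{\eps}p\cdot q$ is a scalar, so $\widehat T^\eps(z)=e^{A+B}=e^{-\frac{i}{2\eps}p\cdot q}\,e^{A}e^{B}$. Evaluating on $\Lambda_\eps\varphi$ gives $\widehat T^\eps(z)\Lambda_\eps\varphi(x)=e^{-\frac{i}{2\eps}p\cdot q}\,e^{\frac{i}{\eps}p\cdot x}\eps^{-d/4}\varphi((x-q)/\sqrt\eps)$, and the prefactor $e^{-\frac{i}{2\eps}p\cdot q}$ in \eqref{transWP} combines with $e^{\frac{i}{\eps}p\cdot x}$ into $e^{\frac{i}{\eps}p\cdot(x-q)}$, which is exactly the definition~\eqref{wpdef}.

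For \eqref{comWP}, the cleanest route is a direct substitution: I compute $\Lambda_\eps^{-1}\wp^\eps_{z'}\varphi(y)=e^{\frac{i}{\eps}p'\cdot(\sqrt\eps y-q')}\varphi(y-q'/\sqrt\eps)$, then apply $\wp^\eps_z$ and substitute $y=(x-q)/\sqrt\eps$; the argument of $\varphi$ becomes $(x-q-q')/\sqrt\eps$ and the accumulated phase is $\tfrac{i}{\eps}\big((p+p')\cdot x - p\cdot q - p'\cdot(q+q')\big)$. Comparing with $\wp^\eps_{z+z'}\varphi(x)$, whose phase is $\tfrac{i}{\eps}(p+p')\cdot(x-q-q')$, the mismatch is precisely $-\tfrac{i}{\eps}p\cdot q'$. (Equivalently one could use $\widehat T^\eps(z)\widehat T^\eps(z')=e^{\frac{i}{2\eps}\sigma(z,z')}\widehat T^\eps(z+z')$ together with \eqref{transWP}, which yields the same identity after collecting the symplectic phase.)

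For \eqref{w:pseudo}, I will combine two covariance properties. First, conjugation by the Heisenberg translation shifts the Weyl symbol,
\begin{equation*}
\widehat T^\eps(z)^{-1}\op^w_\eps(a)\widehat T^\eps(z)=\op^w_\eps(a(\,\cdot\,+z)),
\end{equation*}
which follows from $[\widehat T^\eps(z)^{-1},\widehat x]=-q$, $[\widehat T^\eps(z)^{-1},\widehat\xi]=-p$ (scalar shifts). Second, the semi-classical rescaling obeys
\begin{equation*}
\Lambda_\eps^{-1}\op^w_\eps(b)\Lambda_\eps=\op^w_1(b(\sqrt\eps\,\cdot\,)),
\end{equation*}
as one checks by the change of variables $x=\sqrt\eps x'$, $y=\sqrt\eps y'$, $\xi=\sqrt\eps\xi'$ in the oscillatory integral defining the Weyl operator. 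Using \eqref{transWP} to write $\wp^\eps_z=e^{-\frac{i}{2\eps}p\cdot q}\widehat T^\eps(z)\Lambda_\eps$ and inserting these two identities successively produces
\begin{equation*}
\op^w_\eps(a)\wp^\eps_z=e^{-\frac{i}{2\eps}p\cdot q}\widehat T^\eps(z)\Lambda_\eps\,\op^w_1(a(z+\sqrt\eps\,\cdot\,))=\wp^\eps_z\,\op^w_1(a_{\eps,z}).
\end{equation*}
No step presents a real obstacle; the only care required is bookkeeping of the scalar phases from BCH and from the symplectic form $\sigma(z,z')=p\cdot q'-p'\cdot q$, which is why I prefer the direct computation in \eqref{comWP} over manipulating $\widehat T^\eps(z+z')$.
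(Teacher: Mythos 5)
Your proof is correct; all three identities are verified with the right phases, and the sign bookkeeping in \eqref{comWP} checks out against the paper's computation. The route differs mildly from the paper's in two places. For \eqref{transWP} the paper characterizes $\widehat T^\eps(z)\varphi$ as the time-one solution of $i\eps\partial_t\psi=(q\cdot\widehat\xi-p\cdot\widehat x)\psi$ and reads off the explicit solution, whereas you disentangle the exponential directly by Baker--Campbell--Hausdorff; these are equivalent, and BCH is arguably the more transparent way to see where the phase $e^{-\frac{i}{2\eps}p\cdot q}$ comes from. For \eqref{w:pseudo} the paper performs a single change of variables inside the double oscillatory integral defining $\op^w_\eps(a)$ applied to $\wp^\eps_z\varphi$, while you factor the statement through the two exact covariance properties of Weyl quantization, namely $\widehat T^\eps(z)^{-1}\op^w_\eps(a)\widehat T^\eps(z)=\op^w_\eps(a(\cdot+z))$ and $\Lambda_\eps^{-1}\op^w_\eps(b)\Lambda_\eps=\op^w_1(b(\sqrt\eps\,\cdot))$, combined with \eqref{transWP}. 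Your decomposition is more structural and reuses standard facts (which you would still need to justify, e.g.\ by the same change of variables you invoke for the scaling identity), whereas the paper's computation is self-contained in one display; both are complete. The proof of \eqref{comWP} is identical to the paper's.
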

  
 \begin{proof} We consider $\varphi\in{\mathcal S}(\R^d)$. Then $\widehat T^\eps(z)\varphi$ is the solution 
 at time $t=1$ of the initial value problem
 \[
 i\eps\partial_t\psi = (q\cdot\widehat \xi-p\cdot\widehat x)\psi,\qquad \psi(0) = \varphi.
 \] 
 The explicit form of this solution
 \[
 \psi(t,x) = \e^{-\frac{i}{2\eps}t^2q\cdot p}\, \e^{\frac{i}{\eps}t p\cdot x} \varphi(x-tq)
 \]
 implies for the action of the Weyl translation that
 \[
 \widehat T^\eps(z)\varphi(x) = \e^{-\frac{i}{2\eps}q\cdot p}\, \e^{\frac{i}{\eps}p\cdot x}\varphi(x-q).
 \]
 This yields
 \[
 {\rm e}^{-\frac{i}{2\eps}p\cdot q} \, \hat T^\eps(z)\, \Lambda_\eps\varphi(x) 
 =
 \eps^{-d/4}\,{\rm e}^{-\frac{i}{\eps}p\cdot q} \,\e^{\frac{i}{\eps}p\cdot x}\varphi(\tfrac{x-q}{\sqrt\eps})\\
 =
 \wp^\eps_z\varphi(x). 
\]
 For the commutation property we compute
\begin{align*}
{\rm e}^{-\frac{i}{\eps} p\cdot q^\prime}\wp^\eps_z\Lambda_\eps^{-1} \wp^\eps_{z^\prime}\varphi(x) 
&= 
{\rm e}^{-\frac{i}{\eps} p\cdot q^\prime}\wp^\eps_z
\e^{\frac{i}{\eps}p'\cdot(\sqrt\eps x-q')}\varphi\!\left(\tfrac{\sqrt\eps x-q'}{\sqrt\eps}\right)\\
&= 
{\rm e}^{-\frac{i}{\eps} p\cdot q^\prime}\eps^{-d/4}
\e^{\frac{i}{\eps}p\cdot(x-q)}
\e^{\frac{i}{\eps}p'\cdot(x-q-q')}\varphi\!\left(\tfrac{x-q-q'}{\sqrt\eps}\right)=
\wp^\eps_{z+z^\prime}\varphi(x).
\end{align*}
Moreover,
\begin{align*}
&\wp^\eps_z\,\op_1^w(a_{\eps,z})\varphi(x) \\
&=
\eps^{-d/4} \e^{\frac{i}{\eps}p\cdot(x-q)}(2\pi)^{-d} 
\int_{\R^{2d}} a\!\left(\tfrac{\sqrt\eps}{2}\left(\tfrac{x-q}{\sqrt\eps}+y\right)+q,\sqrt\eps\xi+p\right)) 
\e^{i\xi\cdot((x-q)/\sqrt\eps-y)} \varphi(y) \,dy \,d\xi\\
&=
\eps^{-d/4} \e^{\frac{i}{\eps}p\cdot(x-q)}(2\pi\eps)^{-d} 
\int_{\R^{2d}} a\!\left(\tfrac{1}{2}(x+y')+q,\xi'\right) 
\e^{\frac{i}{\eps}(\xi'-p)\cdot(x-y')} \varphi\!\left(\tfrac{y'-q}{\sqrt\eps}\right) \,dy' \,d\xi'\\
&=
\op_\eps^w(a)\,\wp^\eps_z\varphi(x).
\end{align*}
 \end{proof}

The intertwining property \eqref{w:pseudo}, that relates the wave packet transform with 
Weyl quantization, allows to describe the localisation properties of wave packets as follows. 

 \begin{remark}[Localisation on scale $\sqrt\eps$]\label{rem:apendix}
 Let $\chi\in\mathcal C_0^\infty(\R^{2d})$ be a cut-off function such that $\chi=1$ close to~$0$ and~$0\leq \chi\leq 1$. Define for $R>0$, $\chi_R(z)=\chi(R^{-1} z)$ for all $z\in\R^{2d}$.  Then, for any $k,N\in\N$ and any Schwartz function $\varphi\in{\mathcal S}(\R^d)$
 \[
\left\| \op_1^w(1-\chi_R)\varphi \right\|_{\Sigma^k_1} \le C R^{-N}, 
 \]
 where the constant $C>0$ depends on $k,N$ and the norm of $\varphi$ in $\Sigma^{k+N}_1$. 
Decomposing a wave packet as
 \[
 \wp^\eps_0\varphi = \wp^\eps_0\,\op_1^w(\chi_R)\varphi + \wp^\eps_0\,\op_1^w(1-\chi_R)\varphi,
 \]
 the combination of the above estimate with equation~\eqref{w:pseudo} and the continuity of the wave packet transform as a mapping from $\Sigma^k_1$ to $\Sigma^k_\eps$ yields
 \begin{equation}\label{eq:localisationR}
\left\|\wp^\eps_0\varphi -  \op_\eps^w(\chi_{R\sqrt\eps})\wp^\eps_0\varphi \right\|_{\Sigma^k_\eps} 
\le C R^{-N}. 
 \end{equation}
 \end{remark}

 \section{Algebraic properties of the eigenprojectors}\label{App:projectors}

We  consider a smooth eigenvalue $h(t,z)$ of a matrix-valued Hamiltonian $H(t,z)$, associated with a smooth eigenprojector $\Pi(t,z)$ so that  $H=h\Pi+h^\perp \Pi^\perp$. We emphasize that, in this section,  we just assume smoothness of the projector and make no gap assumption. 
Let us project the solution of the Hamiltonian system \eqref{system} to the eigenspace and consider the function $\tilde w^\eps(t) = 
\widehat\Pi\psi^\eps(t)$. We have
\[
i\eps\partial_t\tilde w^\eps(t) = \left(i\eps\widehat{\partial_t\Pi} + \widehat\Pi\widehat H\right)\psi^\eps(t),
\] 
and by symbolic calculus
\begin{align*}
\widehat\Pi\widehat H &= \widehat{h\Pi} + \frac{\eps}{2i}\widehat{\{\Pi,H\}} + O(\eps^2)= \widehat{h}\widehat{\Pi} -\frac{\eps}{2i}\widehat{\{h,\Pi\}}+ \frac{\eps}{2i}\widehat{\{\Pi,H\}} + O(\eps^2),
\end{align*}
where the order $\eps^2$ remainder will be given a precise meaning in 
Lemma~\ref{lem:preliminary_calcul} and Lemma~\ref{lem:diagonalisation} below. 
Therefore, if we introduce the matrix
\begin{equation}\label{def:B}
B = -2\partial_t \Pi - \{h,\Pi\}+\{\Pi, H\},
\end{equation}
then we may write 
\[
i\eps \partial_t \tilde w^\eps(t) = \widehat h \tilde w^\eps(t) + \frac\eps {2i} \widehat B \psi^\eps(t)
+ O(\eps^2).
\]
Let us examine the algebraic properties of the first order contribution $B$ in more detail.

\begin{lemma}\label{lem:B}
Consider a Hermitian matrix $H = h\Pi+h^\perp\Pi^\perp$ with eigenvalues $h, h^\perp$ and 
corresponding eigenprojectors $\Pi,\Pi^\perp$. 
Then, the matrix $\{\Pi,\Pi\}$ is skew-symmetric and diagonal,
\[
\Pi^\perp\{\Pi,\Pi\}\Pi = \Pi^\perp\{\Pi,\Pi\}\Pi = 0.
\]
The matrix $B$ defined in \eqref{def:B} satisfies 
\[
B\Pi = -2(\Omega+K) = 2i\Theta\Pi\;\;
%\]
\mbox{and}\;\;
%\[
\Pi^\perp B\Pi^\perp = (h-h^\perp)\Pi^\perp\{\Pi,\Pi\}\Pi^\perp,
\]
where the matrices $\Omega$, $K$, and $\Theta$ have been introduced in \eqref{def:Omega}, \eqref{def:K}, 
and \eqref{def:Theta}. Moreover, the matrix $\Omega$ is skew-symmetric and $\Theta$ self-adjoint.
\end{lemma}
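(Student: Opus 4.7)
\medskip
\noindent\textbf{Proof proposal for Lemma~\ref{lem:B}.}
The backbone of the proof is the observation that differentiating the identity $\Pi^2=\Pi$ with respect to any phase space variable (or with respect to $t$) yields $(\partial\Pi)\Pi+\Pi(\partial\Pi)=\partial\Pi$, whence $\Pi(\partial\Pi)\Pi=0$ after multiplying by $\Pi$ on both sides. The analogous identity for $\Pi^\perp$ gives $\Pi^\perp(\partial\Pi)\Pi^\perp=0$. In other words, in the block decomposition induced by $\Pi$ and $\Pi^\perp$, each derivative $\partial_{z_j}\Pi$ (as well as $\partial_t\Pi$ and $\{h,\Pi\}$) is purely off-diagonal. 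I would make this the first lemma-internal step.

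Next, I would use this off-diagonal structure to establish both properties of $\{\Pi,\Pi\}$. Expanding
\[
\{\Pi,\Pi\}=\sum_{j=1}^d\left[(\partial_{\xi_j}\Pi)(\partial_{x_j}\Pi)-(\partial_{x_j}\Pi)(\partial_{\xi_j}\Pi)\right]
\]
and decomposing each factor into its $\Pi/\Pi^\perp$ blocks, every product of two off-diagonal matrices sits either in the $\Pi$-block or in the $\Pi^\perp$-block. This gives $\Pi^\perp\{\Pi,\Pi\}\Pi=0=\Pi\{\Pi,\Pi\}\Pi^\perp$, i.e.\ the diagonality. Skew-symmetry then follows instantly by taking the adjoint of the above expression and using self-adjointness of $\Pi$: the sum just reverses sign. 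The self-adjointness of $\Theta$ and the skew-symmetry of $\Omega$ then come out of a one-line computation, using $\Omega=-\tfrac12(h-h^\perp)\Pi\{\Pi,\Pi\}\Pi$ and the definition $\Theta=i\Omega+i(K-K^*)$.

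To obtain the two identities for $B$, I would first decompose $\{\Pi,H\}$. Writing $H=h\Pi+h^\perp\Pi^\perp$ and using the Leibniz rule for the Poisson bracket together with $\{\Pi,\Pi^\perp\}=-\{\Pi,\Pi\}$, one finds
\[
\{\Pi,H\}=-\{h,\Pi\}\Pi-\{h^\perp,\Pi\}\Pi^\perp+(h-h^\perp)\{\Pi,\Pi\}.
\]
Multiplying on the right by $\Pi$ and collecting terms yields
\[
B\Pi=-2(\partial_t\Pi+\{h,\Pi\})\Pi+(h-h^\perp)\{\Pi,\Pi\}\Pi.
\]
The first term equals $-2K$ because the off-diagonal structure forces $(\partial_t\Pi+\{h,\Pi\})\Pi=\Pi^\perp(\partial_t\Pi+\{h,\Pi\})\Pi$, and the second term equals $-2\Omega$ because the block-diagonality of $\{\Pi,\Pi\}$ allows us to replace $\{\Pi,\Pi\}\Pi$ by $\Pi\{\Pi,\Pi\}\Pi$. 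This proves $B\Pi=-2(\Omega+K)$. The second equality $-2(\Omega+K)=2i\Theta\Pi$ is then a bookkeeping check: since $K$ has $\Pi^\perp$ on the left and $\Pi$ on the right, one has $K^*\Pi=0$ and $K\Pi=K$, while $\Omega\Pi=\Omega$; substituting into $\Theta=i\Omega+i(K-K^*)$ gives the claim. Finally, the identity $\Pi^\perp B\Pi^\perp=(h-h^\perp)\Pi^\perp\{\Pi,\Pi\}\Pi^\perp$ follows by sandwiching the formula for $\{\Pi,H\}$ between $\Pi^\perp$'s: the terms involving $\partial_t\Pi$, $\{h,\Pi\}$ and $\{h^\perp,\Pi\}$ all vanish by the off-diagonal structure, leaving only the $(h-h^\perp)\{\Pi,\Pi\}$ contribution.

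There is no real obstacle here; the whole statement is algebra. The only place where one has to be careful is in the book-keeping of which $\Pi$ or $\Pi^\perp$ survives at each step, and in remembering that the Poisson bracket of two matrix-valued quantities is \emph{not} commutative, so factors have to be kept in order. Once the off-diagonal character of first derivatives of $\Pi$ is isolated at the very beginning, everything else reduces to a short sequence of substitutions.
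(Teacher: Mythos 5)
Your proof is correct and follows essentially the same route as the paper's: the same Leibniz decomposition of $\{\Pi,H\}$, followed by right-multiplication by $\Pi$ and sandwiching between $\Pi^\perp$'s, with the identifications $K\Pi=K$, $K^*\Pi=0$, $\Omega\Pi=\Omega$ for the $\Theta$ bookkeeping. The only cosmetic difference is in establishing the block-diagonality of $\{\Pi,\Pi\}$: you use the off-diagonality of first derivatives of $\Pi$ together with the fact that a product of two off-diagonal blocks is block-diagonal, whereas the paper applies the identity $\{\mathcal A,\mathcal B\mathcal C\}-\{\mathcal A\mathcal B,\mathcal C\}=\{\mathcal A,\mathcal B\}\mathcal C-\mathcal A\{\mathcal B,\mathcal C\}$ with $\mathcal A=\mathcal B=\mathcal C=\Pi$ to conclude that $\{\Pi,\Pi\}$ commutes with $\Pi$ --- both are one-line arguments.
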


\begin{proof}
We use the relation 
 $\{{\mathcal A},{\mathcal B}{\mathcal C}\} -\{{\mathcal A}{\mathcal B},{\mathcal C}\} =
 \{{\mathcal A},{\mathcal B}\} {\mathcal C}-{\mathcal A}\{{\mathcal B},{\mathcal C}\}$.
  and apply it to ${\mathcal A}={\mathcal B}={\mathcal C}=\Pi$. Since $\Pi^2 = \Pi$, we obtain 
 $0=\{\Pi,\Pi\}\Pi-\Pi\{\Pi,\Pi\}$
and therefore
 \[
 \Pi^\perp\{\Pi,\Pi\} \Pi =\Pi\{\Pi,\Pi\} \Pi^\perp =0.
 \]
Besides, by the definition of the Poisson bracket, we have
$\{\Pi,\Pi\}^* = -\{\Pi,\Pi\}$, so that $\{\Pi,\Pi\}$ and $\Omega = -\frac12(h-h^\perp)\Pi\{\Pi,\Pi\}\Pi$ are skew-symmetric. 
 In view of 
\begin{align*}
\{\Pi,H\} &= (h-h^\perp)\{\Pi,\Pi\} - \{ h, \Pi\}\Pi -\{h^\perp,\Pi\}\Pi^\perp,\\
\{ h,\Pi\} &= \{h,\Pi\} \Pi+ \{ h,\Pi\} \Pi^\perp,
\end{align*}
we obtain that
\begin{align*}
B &= -2\partial_t\Pi-\{ h,\Pi\} +\{\Pi,H\}\\
&= -2\partial_t\Pi + (h-h^\perp)\{\Pi,\Pi\} - 2 \{ h,\Pi\} \Pi - \{h+h^\perp,\Pi\}\Pi^\perp.
\end{align*}
Hence,
\begin{align*}
B\Pi &= -2\Pi^\perp(\partial_t\Pi+\{h,\Pi\})\Pi + (h-h^\perp)\Pi\{\Pi,\Pi\}\Pi
= -2(K+\Omega)
 \end{align*}
 \[
 \mbox{and}\;\;
 \Pi^\perp B\Pi^\perp = (h-h^\perp)\Pi^\perp\{\Pi,\Pi\}\Pi^\perp.
 \]
 The matrix $\Theta = i\Omega+i(K-K^*)$ is hermitian, since 
 $\Theta^* = -i\Omega^* - i(K^*-K) = \Theta$. It also satisfies 
 $2i\Theta\Pi = 2i(i\Omega + iK) \Pi= B\Pi$.
 \end{proof}

Decomposing the matrix $B = B\Pi+B\Pi^\perp$, we may  
view the contribution associated with the projector $\Pi$ as an effective dynamical correction to the eigenvalue $h$. 
We obtain the following:

\begin{lemma}\label{lem:preliminary_calcul}
Let $H = h\Pi+h^\perp\Pi^\perp$ be a smooth matrix-valued Hamiltonian with smooth eigenvalues 
$h,h^\perp$ and smooth eigenprojectors $\Pi,\Pi^\perp$. Then, there exists a smooth matrix-valued symbol 
$R^\eps$ such that 
\begin{align}\label{preliminary_calcul}
\widehat \Pi (i\eps \partial_t -\widehat H) =& (i\eps\partial_t - \widehat h -\eps \widehat \Theta)\widehat \Pi 
+\frac{\eps}{2i} \widehat {B\Pi^\perp} \widehat \Pi^\perp +\eps^2 \widehat R^\eps,
\end{align}
where the matrices $B$ and $\Theta$ have been defined in \eqref{def:B} and \eqref{def:Theta}, respectively. 
If the Hamiltonian and its eigenvalues are of subquadratic growth \eqref{hyp:H}, while the projectors grow at most polynomially \eqref{bound:projector}, then for all $k\in\N$ there exist $C_k>0$ and $\ell\in\N$ such that 
\[
\sup_{t\in[t_0,t_0+T]}\|\widehat R^\eps(t)\varphi\|_{\Sigma^k_\eps} \le C_k \|\varphi\|_{\Sigma^\ell_\eps}\;\;
\forall\varphi\in\Sigma^k_\eps(\R^d).
\]
\end{lemma}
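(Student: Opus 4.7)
The plan is to compute $\widehat{\Pi}(i\eps\partial_t - \widehat{H})$ directly using Moyal expansion at order two, substitute the spectral identity $\Pi H = h\Pi$, and then rearrange the resulting commutator so that the definition of $B$ in~\eqref{def:B} (together with the algebraic identities of Lemma~\ref{lem:B}) appears naturally. First I would move $i\eps\partial_t$ through $\widehat{\Pi}$ via the product rule
\[
i\eps\,\widehat{\Pi}\,\partial_t\psi \;=\; i\eps\,\partial_t(\widehat{\Pi}\psi) - i\eps\,\widehat{\partial_t\Pi}\,\psi,
\]
producing the term $i\eps\partial_t\widehat{\Pi}$ on the right-hand side together with a leftover $-i\eps\widehat{\partial_t\Pi}$.

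Next I would expand $\widehat{\Pi}\widehat{H}$ via Moyal. Using the fact that $\Pi H = h\Pi$ at the principal level, and then writing $\widehat{h\Pi}$ in terms of $\widehat{h}\widehat{\Pi}$ by a second Moyal expansion, the principal symbols cancel and one is left with
\[
\widehat{\Pi}\widehat{H} \;=\; \widehat{h}\widehat{\Pi} + \frac{\eps}{2i}\,\widehat{\{\Pi,H\} - \{h,\Pi\}} + \eps^2 \widehat{r_0^\eps},
\]
where $r_0^\eps$ is the combined second-order Moyal remainder. Combining this with the $-i\eps\widehat{\partial_t\Pi}$ contribution from the first step, one recognises the symbol
\[
2\partial_t\Pi - \{\Pi,H\} + \{h,\Pi\} \;=\; -B,
\]
which yields the compact intermediate identity
\[
\widehat{\Pi}(i\eps\partial_t - \widehat{H}) \;=\; (i\eps\partial_t - \widehat{h})\widehat{\Pi} \;-\; \frac{\eps}{2i}\,\widehat{B} \;+\; \eps^2\widehat{r^\eps}.
\]
To finish, I would split $B = B\Pi + B\Pi^\perp$. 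By Lemma~\ref{lem:B} one has $B\Pi = 2i\Theta\Pi$, so $-\frac{\eps}{2i}\widehat{B\Pi} = -\eps\widehat{\Theta\Pi}$, and replacing $\widehat{\Theta\Pi}$ by $\widehat{\Theta}\widehat{\Pi}$ via one more Moyal step absorbs the difference into the $\eps^2$ remainder. The same Moyal step applied in reverse converts $\widehat{B\Pi^\perp}$ into $\widehat{B\Pi^\perp}\widehat{\Pi^\perp}$ up to $O(\eps)$, yielding exactly the two pieces appearing on the right-hand side of~\eqref{preliminary_calcul} and folding all extra errors into $\eps^2\widehat{R^\eps}$.

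The main obstacle is the quantitative continuity bound for $\widehat{R^\eps}$ on the scale $\Sigma^k_\eps$. The subtlety is that, although $H$ is subquadratic~\eqref{hyp:H}, the projectors $\Pi$ are only known to grow polynomially in $z$ at infinity through~\eqref{bound:projector}, so the Moyal remainders
\[
r_0^\eps(z) \;=\; \frac{1}{\eps^2}\!\left(\Pi\#H - \Pi H - \tfrac{\eps}{2i}\{\Pi,H\}\right)\!(z)
\]
and its analogues involve third-order $z$-derivatives of $\Pi$, $\partial_t\Pi$, $h$ and $\Theta$, whose symbols acquire weights of order $\langle z\rangle^{n}$ for some $n=n(k)$. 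To handle them I would invoke a weighted Calderón--Vaillancourt estimate adapted to the semiclassical Shubin-type spaces $\Sigma^k_\eps$: a symbol whose derivatives of order $\ge 3$ are polynomially bounded of weight $\langle z\rangle^n$ quantises to an operator bounded from $\Sigma^\ell_\eps$ to $\Sigma^k_\eps$ uniformly in $\eps$, for $\ell = k + n + $ a finite number of derivatives controlling the symbol seminorms. The key point is that the $\eps^2$ prefactor is \emph{not} spoiled by a negative power of $\eps$, since the symbols of $\Pi$ and $H$ are themselves $\eps$-independent; the polynomial growth only costs a fixed number of additional derivatives on the test function $\varphi$, which is exactly what the statement allows by taking the norm of $\varphi$ in $\Sigma^\ell_\eps$ with $\ell$ larger than $k$. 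Uniformity in $t\in[t_0,t_0+T]$ follows from the continuity in $t$ of the symbols and their derivatives together with compactness of the time interval.
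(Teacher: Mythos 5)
Your argument follows essentially the same route as the paper's proof: commute $i\eps\partial_t$ through $\widehat\Pi$, use first-order symbolic calculus together with the cancellation $h\Pi=\Pi H$ to identify the order-$\eps$ symbol $-\tfrac{1}{2i}B$, then split $B=B\Pi+B\Pi^\perp$ using $B\Pi=2i\Theta\Pi$ from Lemma~\ref{lem:B}; your weighted Calder\'on--Vaillancourt discussion of the remainder merely makes explicit what the paper asserts in one line. The only caveat is that this bookkeeping actually produces $-\tfrac{\eps}{2i}\,\widehat{B\Pi^\perp}\,\widehat{\Pi^\perp}$ rather than the $+$ sign displayed in \eqref{preliminary_calcul}, so the last term does not come out ``exactly'' as you claim --- but the paper's own proof carries the same sign slip in its final displayed identity, so this is a typo in the source rather than a gap in your argument.
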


\begin{proof}
We  write
\begin{align*}
\widehat \Pi (i\eps \partial_t -\widehat H) =& (i\eps \partial_t -\widehat h)\widehat \Pi -i\eps \widehat{\partial_t\Pi} 
+ \widehat h \widehat \Pi - \widehat \Pi\widehat H.
\end{align*}
The symbolic calculus gives
\begin{align*}
\widehat h \widehat \Pi - \widehat \Pi\widehat H= \frac \eps{2i} (\{ h,\Pi\} -\{\Pi,H\}) + \eps^2 R^\eps,
\end{align*}
where the remainder $\widehat R^\eps(t)$ satisfies the claimed estimate due to the growth assumptions on 
the symbols $h,H$ and $\Pi$.  
In view of Lemma~\ref{lem:B}, we have
\begin{align*}
-i\partial_t \Pi+ \frac1{2i}\left( \{ h,\Pi\} -\{\Pi,H\}\right) = \Theta\Pi + \frac{1}{2i} B\Pi^\perp,
\end{align*}
which concludes our proof.
\end{proof}

We note that for the projected solution $\tilde w^\eps(t) = \widehat\Pi \psi^\eps(t)$, equation \eqref{preliminary_calcul} implies an evolution equation of the form
\[
i\eps\partial_t \tilde w^\eps(t) = (\widehat h + \eps\widehat\Theta)\tilde w^\eps(t) - 
\frac{\eps}{2i} \widehat{B\Pi^\perp} \widehat\Pi^\perp \psi^\eps(t) + O(\eps^2).
\]
In a next step we use the matrix $B$ for introducing the first order super-adiabatic correction of the 
eigenprojector $\Pi$, following ideas from~\cite{sjo_ad,emwe,bi,MS,N1,N2,Te}. 

\begin{definition}\label{def:superadiabatic}
We assume that $H$ is a smooth Hermitian matrix that has two smooth eigenvalues $h$ and $h^\perp$ 
and smooth eigenprojectors $\Pi$ and $\Pi^\perp$, that is, $H = h\Pi + h^\perp\Pi^\perp$. 
The first super-adiabatic corrector of $\Pi$ is  the hermitian matrix $\mathbb P = \mathbb P^*$ defined by 
\begin{align*}
\Pi\mathbb P \Pi^\perp 
& = \frac{i}{h-h^\perp}\Pi\left(\partial_t \Pi +\frac12 \{h+h^\perp,\Pi\}\right)\Pi^\perp,\\ 
\Pi^\perp\mathbb P \Pi 
& = -\frac{i}{h-h^\perp}\Pi^\perp\left(\partial_t \Pi +\frac12 \{h+h^\perp,\Pi\}\right)\Pi,\\
\Pi\mathbb P\Pi &= -\frac{1}{2i}\Pi\{\Pi,\Pi\}\Pi
,\;\;\;\;\;\;
\Pi^\perp \mathbb P\Pi^\perp = \frac{1}{2i}\Pi^\perp\{\Pi,\Pi\}\Pi^\perp.
\end{align*}
\end{definition}
Note that one has $\displaystyle{
\mathbb P \Pi^\perp = \frac{1}{2i}(h-h^\perp)^{-1} B\Pi^\perp \;\;\mbox{and}\;\;
\mathbb P\Pi = -\frac{1}{2i}(h-h^\perp)^{-1} B\Pi}$.

Note that the diagonal part of the matrix $\mathbb P$ is smooth, while the off-diagonal part of~$\mathbb P$ is singular on the crossing set~$\Upsilon = \{f = 0\}$. Besides, for all $\beta\in\N^{2d}$ and $R>0$,
 \begin{equation}\label{est:corrector} 
\exists C_{\beta,R}>0,\;\; \forall z\in B(0,R) \cap \{f(t,z)>\delta\},\;\;\forall t\in\R,\;\; \| \partial^\beta _z\mathbb P(t,z)\| \leq C_{\beta,R} \,\delta^{|\beta|+1}.
\end{equation}
The main interest in the corrector $\mathbb P$ comes from the following relations: 

\begin{lemma}\label{lem:super}
With the assumptions of Definition~\ref{def:superadiabatic},
the corrector 
matrix $\mathbb P$ satisfies 
\begin{equation}\label{eq:subquad}
[H,{\mathbb P}]= i\partial_t \Pi -\frac{1}{2i}
(\{H,\Pi\}-\{\Pi,H\} )\quad\mbox{and}\quad
\mathbb P \Pi+\Pi \mathbb P =\mathbb P -  \frac 1 {2i} \{\Pi,\Pi\},
\end{equation}
as well as
\[
i\partial_t\Pi + \mathbb P(H-h) + \frac 1{2i} \{\Pi, H+h\} = \Theta\Pi\quad\mbox{and}\quad i(\partial_t\Pi + \{h,\Pi\}) = [\Theta,\Pi],
\]
where the matrix $\Theta$ is given by~\eqref{def:Theta}.
\end{lemma}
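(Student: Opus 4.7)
\emph{Approach.} All four identities are algebraic consequences of the block decomposition $\mathbb{P}=\Pi\mathbb{P}\Pi+\Pi\mathbb{P}\Pi^\perp+\Pi^\perp\mathbb{P}\Pi+\Pi^\perp\mathbb{P}\Pi^\perp$ provided by Definition~\ref{def:superadiabatic}, combined with the block facts of Lemma~\ref{lem:B} and three preliminary observations. First, from $\Pi^2=\Pi$ one obtains $\partial_t\Pi\cdot\Pi+\Pi\cdot\partial_t\Pi=\partial_t\Pi$ and $\{f,\Pi\}\Pi+\Pi\{f,\Pi\}=\{f,\Pi\}$ for every scalar $f$, which force $\Pi\,\partial_t\Pi\,\Pi=\Pi^\perp\partial_t\Pi\,\Pi^\perp=0$ and $\Pi\{f,\Pi\}\Pi=\Pi^\perp\{f,\Pi\}\Pi^\perp=0$; in particular, both $\partial_t\Pi$ and $\{f,\Pi\}$ are block-off-diagonal. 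Second, by Lemma~\ref{lem:B}, $\{\Pi,\Pi\}$ is block-diagonal: $\{\Pi,\Pi\}=\Pi\{\Pi,\Pi\}\Pi+\Pi^\perp\{\Pi,\Pi\}\Pi^\perp$. Third, substituting $H=h\Pi+h^\perp\Pi^\perp$ into the matrix Leibniz rule for $\{\cdot,\cdot\}$ and simplifying via the first observation yields $\{H,\Pi\}-\{\Pi,H\}=\{h+h^\perp,\Pi\}$.

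\emph{Identities (1) and (2).} For (1), since $h,h^\perp$ are scalar, $[H,\mathbb{P}]=(h-h^\perp)[\Pi,\mathbb{P}]=(h-h^\perp)(\Pi\mathbb{P}\Pi^\perp-\Pi^\perp\mathbb{P}\Pi)$. Inserting the off-diagonal blocks from Definition~\ref{def:superadiabatic} produces $i\Pi X\Pi^\perp+i\Pi^\perp X\Pi$ with $X=\partial_t\Pi+\tfrac12\{h+h^\perp,\Pi\}$; the first observation makes the diagonal blocks $\Pi X\Pi$ and $\Pi^\perp X\Pi^\perp$ vanish, so the two off-diagonal pieces reassemble to $iX$, which by the third observation equals the claimed right-hand side. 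For (2), the block expansion gives $\Pi\mathbb{P}+\mathbb{P}\Pi-\mathbb{P}=\Pi\mathbb{P}\Pi-\Pi^\perp\mathbb{P}\Pi^\perp$; substituting the diagonal blocks from Definition~\ref{def:superadiabatic} and applying the second observation yields $-\tfrac{1}{2i}\{\Pi,\Pi\}$.

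\emph{Identities (4) and (3).} Using $\Omega=\Pi\Omega\Pi$, $K=\Pi^\perp K\Pi$, $K^*=\Pi K^*\Pi^\perp$ from Lemma~\ref{lem:B} in $\Theta=i\Omega+i(K-K^*)$ gives $\Theta\Pi=i(\Omega+K)$ and $\Pi\Theta=i(\Omega-K^*)$, hence $[\Theta,\Pi]=i(K+K^*)$; by the first observation $\partial_t\Pi+\{h,\Pi\}$ is block-off-diagonal, and $K,K^*$ are precisely its $\Pi^\perp(\cdot)\Pi$ and $\Pi(\cdot)\Pi^\perp$ blocks, so $K+K^*=\partial_t\Pi+\{h,\Pi\}$, which proves~(4). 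For~(3), the formula $\mathbb{P}\Pi^\perp=\tfrac{1}{2i(h-h^\perp)}B\Pi^\perp$ stated just before Lemma~\ref{lem:super} (itself a two-block verification) together with $H-h=(h^\perp-h)\Pi^\perp$ yields $\mathbb{P}(H-h)=\tfrac{i}{2}B\Pi^\perp$. Writing $B\Pi^\perp=B+2(\Omega+K)$ via Lemma~\ref{lem:B}, unfolding $B=-2\partial_t\Pi-\{h,\Pi\}+\{\Pi,H\}$, and substituting $\Theta\Pi=i(\Omega+K)$ from~(4) produces $i\partial_t\Pi+\mathbb{P}(H-h)=\tfrac{i}{2}(\{\Pi,H\}-\{h,\Pi\})+\Theta\Pi$; the antisymmetry $\{\Pi,h\}=-\{h,\Pi\}$ (valid since the second slot is scalar) then makes the term $\tfrac{1}{2i}\{\Pi,H+h\}$ cancel the first summand on the right, leaving $\Theta\Pi$. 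The only real obstacle throughout is the careful bookkeeping of matrix orderings in Poisson brackets of matrix-valued symbols — in particular the fact that $\{H,\Pi\}\neq-\{\Pi,H\}$ in general, so that the identity $\{H,\Pi\}-\{\Pi,H\}=\{h+h^\perp,\Pi\}$ must be derived rather than quoted — but no conceptual step is needed beyond this.
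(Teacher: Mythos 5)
Your proof is correct and follows essentially the same route as the paper's: the block decomposition of $\mathbb P$ from Definition~\ref{def:superadiabatic}, the identity $\{H,\Pi\}-\{\Pi,H\}=\{h+h^\perp,\Pi\}$, and the block facts of Lemma~\ref{lem:B}. The only cosmetic differences are that for the third identity you substitute $\Theta\Pi=i(\Omega+K)$ where the paper uses $\tfrac{1}{2i}B\Pi=\Theta\Pi$ directly (both are restatements of Lemma~\ref{lem:B}), and that you make explicit the off-diagonality of $\partial_t\Pi+\{f,\Pi\}$, which the paper uses implicitly.
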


\begin{proof} Since $H$ is acting as a scalar on ${\rm Ran}\Pi$ and ${\rm Ran}\Pi^\perp$, we have
\begin{align*}
[H,\mathbb P] &= [H,\Pi\mathbb P\Pi^\perp + \Pi^\perp \mathbb P \Pi] = 
(h-h^\perp)\Pi\mathbb P\Pi^\perp + (h^\perp-h)\Pi^\perp\mathbb P \Pi\\
&= i(\partial_t \Pi + \tfrac12 \{h+h^\perp,\Pi\}).
\end{align*}
Since
%\begin{align*}
$$\{H,\Pi\} - \{\Pi,H\} = \Pi\{h,\Pi\} + \Pi^\perp\{h^\perp,\Pi\} - \{\Pi,h\}\Pi - \{\Pi,h^\perp\}\Pi^\perp= \{h+h^\perp,\Pi\},$$
%\end{align*}
we have proven the first equation. For the second equation, we calculate
%\begin{align*}
$$\mathbb P\Pi + \Pi\mathbb P = 2\Pi \mathbb P \Pi + \Pi^\perp\mathbb P \Pi + \Pi \mathbb P \Pi^\perp 
= \mathbb P + \Pi\mathbb P \Pi - \Pi^\perp \mathbb P \Pi^\perp \\
= \mathbb P - \tfrac{1}{2i}\{\Pi,\Pi\},$$
%\end{align*}
where we have used that $\{\Pi,\Pi\}$ is diagonal. For the first relation with $\Theta$, we write $H-h = (h^\perp-h)\Pi^\perp$ and obtain
\[
\mathbb P(H-h) = (h^\perp-h)\mathbb P\Pi^\perp = -\frac{1}{2i}B\Pi^\perp.
\]
Therefore, by Lemma~\ref{lem:B},
\[
i\partial_t\Pi + \mathbb P(H-h) + \frac 1{2i} \{\Pi, H+h\} = \frac{1}{2i}B - \frac{1}{2i}B\Pi^\perp = \Theta\Pi.
\]
For the commutator of $\Theta$ and $\Pi$, we have
\begin{align*}
[\Theta,\Pi] &= i[\Omega,\Pi]+ i[K,\Pi] - i[K^*,\Pi] \\*[1ex]
&= i\Pi^\perp(\partial_t\Pi + \{h,\Pi\})\Pi + i\Pi^\perp(\partial_t\Pi + \{h,\Pi\})\Pi =i(\partial_t\Pi + \{h,\Pi\}).
\end{align*}
\end{proof}

If the crossing set $\Upsilon$ were empty and all the symbols in consideration were bounded, the relations of Lemma~\ref{lem:super} would imply that setting 
$\Pi^\eps= \Pi+ \eps \mathbb P,$
then $\widehat \Pi^\eps$ would be ``better''  than $\widehat \Pi$ in terms of being an eigenprojector 
of $\widehat H$: in $\mathcal L(L^2(\R^d))$,  
$$\widehat {\Pi^\eps} \widehat {\Pi^\eps } =\widehat {\Pi^\eps} +O(\eps^2)
\quad\text{and}\quad
\widehat {\Pi^\eps}(-i\eps\partial_t+\widehat H) =  
(-i\eps\partial_t+\widehat h+\eps \widehat \Theta)\widehat {\Pi^\eps} +O(\eps^2),$$
while the estimate would be only $O(\eps)$ when using the uncorrected $\widehat \Pi$. However, because the symbols we consider  are smooth only outside $\Upsilon$, we need to use cut-off functions to correctly state  such properties.

\begin{lemma}\label{lem:diagonalisation}
Let $I$ be an interval of $\R$  and  $\chi^\delta ,\tilde \chi^\delta\in\mathcal C(I,{\mathcal C}_0^\infty(\R^{2d}))$ be two cut-off functions that satisfy:
 \begin{enumerate}
 \item For any $t\in I$ and $z$ in the support of $\chi^\delta(t)$ and $\tilde \chi^\delta(t)$ we have
$|(h-h^\perp)(t,z)|>\delta$.
\item The functions $\chi^\delta$ and $\tilde\chi^\delta$ 
satisfy 
$$\partial_t \chi^\delta + \left\{h, \chi^\delta \right\}=0,\;\;\partial_t \tilde\chi^\delta + \left\{h, \tilde\chi^\delta \right\}=0.$$
\item The functions $\tilde\chi^\delta$ are supported in $\{\chi^\delta=1\}$. 
\end{enumerate}
Let $k\in\N$. Then, we have for all $t\in I$ in $\Sigma^k_\eps$,
\begin{align*}
\label{gamma}
 \widehat {\tilde \chi^\delta}\left(-i\eps\partial_t +(\hat h + \eps \widehat \Theta)\right) \widehat{\chi^\delta\Pi^\eps}&  = \widehat {\tilde \chi^\delta}\widehat {\chi^\delta \Pi^\eps}(-i\eps\partial_t + \widehat H )+ O(\eps^2 \delta^{-2}).
\end{align*}
In particular, the function $w^\eps(t) = \widehat {\tilde \chi^\delta}\widehat {\chi^\delta \Pi^\eps}\psi^\eps(t)$ satisfies for all $t\in I$ in $\Sigma^k_\eps$,
$$i\eps \partial_t w^\eps(t)= (\hat h + \eps \widehat \Theta) w^\eps(t) +O(\eps^2\delta^{-2}).$$
\end{lemma}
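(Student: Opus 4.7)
The plan is to reduce the asserted operator identity to a symbol-level computation via the Moyal expansion, exploiting the algebraic design of the superadiabatic corrector $\mathbb{P}$ encoded in Lemma~\ref{lem:super}. Writing $A := \chi^\delta\Pi + \eps\chi^\delta\mathbb{P}$, so that $\widehat A = \widehat{\chi^\delta\Pi^\eps}$, the operator difference
\[
(-i\eps\partial_t + \widehat h + \eps\widehat\Theta)\widehat A - \widehat A(-i\eps\partial_t + \widehat H) = -i\eps\,\widehat{\partial_t A} + \bigl(\widehat h\,\widehat A - \widehat A\,\widehat H\bigr) + \eps\,\widehat\Theta\,\widehat A
\]
unfolds, after Moyal composition to order $\eps^2$, as a sum of symbols graded by $\eps$. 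The order-$\eps^0$ piece $hA - AH = \chi^\delta(h\Pi - \Pi H)$ vanishes identically because $\Pi H = h\Pi$.

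The order-$\eps^1$ piece is the algebraic heart of the argument. Collecting
\[
-i\partial_t(\chi^\delta\Pi) + \chi^\delta(h\mathbb{P} - \mathbb{P}H) + \tfrac{1}{2i}\bigl(\{h,\chi^\delta\Pi\} - \{\chi^\delta\Pi,H\}\bigr) + \Theta\chi^\delta\Pi,
\]
I would expand the Poisson brackets by Leibniz, separating $\nabla\chi^\delta$ contributions from $\chi^\delta\,\nabla(\cdot)$ contributions, insert the superadiabatic identity $\mathbb{P}(H-h) = -i\partial_t\Pi - \tfrac{1}{2i}\{\Pi,H+h\} + \Theta\Pi$ from Lemma~\ref{lem:super}, and use $\partial_t\chi^\delta = -\{h,\chi^\delta\}$. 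The $\chi^\delta$-proportional terms cancel thanks to Lemma~\ref{lem:super} combined with the antisymmetry $\{h,\Pi\}+\{\Pi,h\}=0$, while the scalarity of $\chi^\delta$ (so $[\Theta,\chi^\delta]=0$) eliminates the $\Theta$-terms. What remains is the single residue
\[
R \,=\, -\tfrac{h-h^\perp}{2i}\{\chi^\delta,\Pi\}\,\Pi,
\]
supported entirely in $\{\nabla\chi^\delta\neq 0\}$.

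The residue is then killed by the nested cutoff. Since $\tilde\chi^\delta$ is supported in the set where $\chi^\delta\equiv 1$, there is an open neighbourhood $U$ of $\operatorname{supp}(\tilde\chi^\delta)$ on which $\chi^\delta\equiv 1$; hence every derivative of $\chi^\delta$, and therefore $R$ itself together with all its derivatives, vanishes on $U$. The standard non-stationary phase estimate for the Weyl composition of symbols with disjoint supports then yields $\widehat{\tilde\chi^\delta}\,\eps\widehat R = O(\eps^N)$ in $\mathcal{L}(\Sigma^k_\eps)$ for every $N$. The remaining $O(\eps^2)$ symbolic contributions are controlled by the singular-derivative bound \eqref{est:corrector}: the dominant one is the first Poisson bracket of $h$ (respectively $H$) with $\eps\chi^\delta\mathbb{P}$, whose first derivatives are $O(\delta^{-2})$ and hence contribute $O(\eps^2\delta^{-2})$ in $\mathcal{L}(\Sigma^k_\eps)$. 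The pure second-order Moyal term involving $\eps\mathbb{P}$ is $O(\eps^3\delta^{-3})$, which is dominated by $O(\eps^2\delta^{-2})$ since $\sqrt\eps\ll\delta$ gives $\eps/\delta<1$. This establishes the main operator identity.

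To obtain the evolution equation for $w^\eps$, I apply the identity to $\psi^\eps$, use $i\eps\partial_t\psi^\eps = \widehat H\psi^\eps$ to merge both $\partial_t$ contributions into $-i\eps\partial_t(\widehat{\chi^\delta\Pi^\eps}\psi^\eps)$, and commute $\widehat{\tilde\chi^\delta}$ past $-i\eps\partial_t + \widehat h+\eps\widehat\Theta$. The boundary correction $i\eps\widehat{\partial_t\tilde\chi^\delta}\widehat{\chi^\delta\Pi^\eps}\psi^\eps$ and the principal part of the commutator $[\widehat h,\widehat{\tilde\chi^\delta}] = \tfrac{\eps}{i}\widehat{\{h,\tilde\chi^\delta\}} + O(\eps^2)$ cancel exactly thanks to $\partial_t\tilde\chi^\delta = -\{h,\tilde\chi^\delta\}$, delivering $i\eps\partial_t w^\eps = (\widehat h+\eps\widehat\Theta)w^\eps + O(\eps^2\delta^{-2})$. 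The hardest step is the algebraic cancellation at order $\eps$: any sign or bookkeeping error in tracking the Leibniz expansion of the Poisson brackets against the superadiabatic identity would leave behind an uncontrolled $O(\eps)$ term that is not killed by the cutoff, so exact agreement with Lemma~\ref{lem:super} is essential.
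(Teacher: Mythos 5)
Your proposal is correct and follows essentially the same route as the paper: a Moyal expansion of the operator difference, cancellation of the order-$\eps$ diagonal terms via the superadiabatic identity of Lemma~\ref{lem:super} together with the transport equation for $\chi^\delta$, elimination of the residual terms (which are proportional to derivatives of $\chi^\delta$) by the nested cutoff $\tilde\chi^\delta$, and control of the $O(\eps^2\delta^{-2})$ remainders through the singular bounds on $\mathbb P$ and its derivatives. The derivation of the evolution equation for $w^\eps$ via the commutator $[\,i\eps\partial_t-\hat h-\eps\widehat\Theta,\widehat{\tilde\chi^\delta}\,]$ and the transport equation for $\tilde\chi^\delta$ also matches the paper's argument.
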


\begin{proof}
We write 
\begin{align*}
\widehat {\chi^\delta \Pi^\eps}(-i\eps\partial_t + \widehat H) &= {\rm op}_\eps ( \chi^\delta h \Pi ) + \eps {\rm op}_\eps ( \chi^\delta \mathbb P H + \frac 1{2i} \{ \chi^\delta \Pi, H\}) - \widehat {\chi^\delta \Pi^\eps}(i\eps\partial_t) + \eps^2 \widehat{R^\delta(t)},
\end{align*}
where the remainder $R^\delta(t)$ depends on first order derivatives of $\chi^\delta\mathbb P$ and $H$ as 
well as second order derivatives of $\chi^\delta\Pi$ and $H$. Hence, $\widehat R^\delta(t) = O(\delta^{-2})$.
%\medskip\textcolor{blue}{
%Let us check the structure of the matrix $\{\mathbb P,H\}$. We have
%\[
%\{\mathbb P,H\} = \{\mathbb P,h\}\Pi + (h-h^\perp)\{\mathbb P,\Pi\} + \{\mathbb P,h^\perp\}\Pi^\perp.
%\]
%The off-diagonal part of $\mathbb P$ is of the form $(h-h^\perp)^{-1}(\mathbb M_{+-}+\mathbb M_{-+})$, 
%where $\mathbb M_{kl}$ are bounded. We have
%\[
%\{(h-h^\perp)^{-1}\mathbb M,h\} = (h-h^\perp)^{-1}\{\mathbb M,h\} + 
%\{(h-h^\perp)^{-1},h\}\mathbb M
%\]
%and
%\begin{align*}
%&\{\mathbb P_{\rm off},h\}\Pi + \{\mathbb P_{\rm off},h^\perp\}\Pi^\perp \\
%&= O(\delta^{-1}) + 
%\{(h-h^\perp)^{-1},h\}\mathbb M_{-+}\Pi + \{(h-h^\perp)^{-1},h^\perp\}\mathbb M_{+-}\Pi^\perp\\
%&= O(\delta^{-1}) + 
%\{(h-h^\perp)^{-1},h\}\mathbb M_{-+}\Pi + \{(h-h^\perp)^{-1},h^\perp-h\}\mathbb M_{+-}\Pi^\perp +
%\{(h-h^\perp)^{-1},h\}\mathbb M_{+-}\Pi^\perp\\
%&= O(\delta^{-1}) + 
%\{(h-h^\perp)^{-1},h\}(\mathbb M_{-+}\Pi + \mathbb M_{+-}\Pi^\perp)\\
%&= O(\delta^{-2})
%\end{align*}}
Next, we write
\begin{align*}
{\rm op}_\eps ( \chi^\delta h \Pi ) - \widehat {\chi^\delta \Pi^\eps}(i\eps\partial_t) &= 
{\rm op}_\eps (h) {\rm op}_\eps( \chi^\delta\Pi) -\frac{\eps}{2i}  {\rm op}_\eps(\{h, \chi^\delta\Pi\})\\
&\qquad-(i\eps\partial_t) \widehat {\chi^\delta \Pi^\eps}+ i\eps{\rm op}_\eps ( \partial_t(\chi^\delta \Pi^\eps)) + 
\eps ^2\widehat{\rho_2^\delta(t)},
\end{align*}
where $\rho_2^\delta(t)$ depends linearly on second derivatives of $\chi^\delta\Pi$ and $h$. 
%Hence, $\widehat{r_2^\delta(t)} = O(\delta^{-2})$.
By Lemma~\ref{lem:super}, one part of the first order contributions can be combined according to 
\begin{align*}
i\partial_t\Pi + \mathbb P(H-h) + \frac 1{2i} \{\Pi, H+h\} = \Theta\Pi.
\end{align*}
All this implies
\begin{align*}
\widehat {\chi^\delta \Pi^\eps}(-i\eps\partial_t + \widehat H) &=(- i\eps \partial_t +  {\rm op}_\eps (h +\eps \Theta )) {\rm op}_\eps( \chi^\delta\Pi ^\eps ) \\
&\qquad+ \eps {\rm op}_\eps (\rho_1^\delta(t)) + 
\eps^2\op_\eps(\rho_2^\delta(t) + R^\delta(t)),
\end{align*}
where the remainder is given by
\begin{align*}
\rho_2^\delta(t) &= \Pi (i\partial_t\chi^\delta + \frac{1}{2i}\{\chi^\delta,H+h\})\\
&= \Pi (i\partial_t\chi^\delta + \frac{1}{i}\{\chi^\delta,h\} + \frac{1}{2i}(h-h^\perp)\{\chi^\delta,\Pi\} )\\
&= \frac{1}{2i}(h-h^\perp)\Pi\{\chi^\delta,\Pi\}\Pi^\perp,
\end{align*}
since $\partial_t\chi^\delta+\{h,\chi^\delta\}=0$. We note that $\rho_1^\delta(t)$ and $\rho_2^\delta(t)$ are smooth symbols, depending linearly on derivatives of $\chi^\delta$ and thus are~$0$ on the support of $\tilde\chi^\delta$. The latter observation implies the first result.
For the function~$w^\eps(t)$ we then have
\begin{align*} 
&(i\eps \partial_t -\hat h - \eps \widehat \Theta)w^\eps(t)\\
&=
\widehat {\tilde \chi^\delta}(i\eps \partial_t -\hat h - \eps \widehat \Theta)\widehat {\chi^\delta \Pi^\eps}\psi^\eps(t)
+ [ (i\eps \partial_t -\hat h - \eps \widehat \Theta), \widehat {\tilde \chi^\delta}]\widehat {\chi^\delta \Pi^\eps}
%(i\eps\partial_t -\widehat H) 
\psi^\eps(t)\\
&= \widehat {\tilde \chi^\delta}\widehat {\chi^\delta \Pi^\eps}(i\eps\partial_t -\widehat H) \psi^\eps(t) 
+  [ (i\eps \partial_t -\hat h - \eps \widehat \Theta), \widehat {\tilde \chi^\delta}] \widehat {\chi^\delta \Pi^\eps}
%(i\eps\partial_t -\widehat H) 
\psi^\eps(t)+\eps^2\widehat{R^\delta(t)}.
\end{align*}
Moreover, since $\partial_t \tilde \chi^\delta + \{ h,\tilde\chi^\delta\}=0$, we have  
$$[ (i\eps \partial_t -\hat h - \eps \widehat \Theta), \widehat {\tilde \chi^\delta}]= 
\eps^3 \widehat{r^\delta_3(t)} + \eps^2 \widehat{r^\delta_1(t)},$$
where the first part of the remainder $r^\delta_3(t)$ depends on third 
derivatives of $h$ and $\tilde\chi^\delta$, while the second part $r^\delta_1(t)$ depends on first derivatives of 
$\Theta$ and $\tilde\chi^\delta$. Since $\delta\gg\sqrt\eps$, we have
\[
[(i\eps \partial_t -\hat h - \eps \widehat \Theta), \widehat {\tilde \chi^\delta}] = O(\eps^2\delta^{-1}).
\]
Using $(i\eps\partial_t -\widehat H) \psi^\eps(t)=0$, we obtain the equation for $w^\eps(t)$.
\end{proof}

\section{Parallel transport}\label{app:parallel}
We prove here Proposition~\ref{prop:eigenvector} that provides the time-dependent eigenvector $\vec V(t,z)$ defined by parallel transport. We adapt the proof of \cite[Proposition~C.1]{CF11} to account for the matrix 
$\Omega(t,z)$, noting that we only require that $\Omega(t,z)$ is a skew-symmetric matrix mapping into the range of $\Pi(t,z)$. 

\begin{proof}
We consider the solution $\vec V(t,z)$ of the parallel transport equation and 
set $Y(t,z) = \vec V(t,\Phi_h^{t,t_0}(z))$. We observe that $Y(t,z)$ solves the equation
\begin{eqnarray}
\nonumber 
\partial_t Y(t,z)&=&\partial_t \vec V(t,\Phi_h^{t,t_0}(z)) +J\partial_zh(\Phi_h^{t,t_0}(z)) V(t,\Phi_h^{t,t_0}(z))\\
\label{eq:Y}
&=&\Omega(t,\Phi^{t,t_0}_h(z))Y(t,z) + K(t,\Phi^{t,t_0}_h(z))Y(t,z).
\end{eqnarray}
In particular, since $\Omega(t,z)$ maps into the range of $\Pi(t,z)$, 
\[
\Pi^\perp(t,\Phi_h^{t,t_0}(z))\  \partial_t Y(t,z) = K(t,\Phi^{t,t_0}_h(z))Y(t,z).
\]
We now start proving that for $z\in U$,
$\Pi(t,\Phi_h^{t,t_0}(z))Y(t,z)=Y(t,z)$, 
or equivalently that 
$$Z(t,z)=\Pi^\perp(t,\Phi_h^{t,t_0}(z))Y(t,z)$$
is constant and equal to $0$.  We compute 
\[
\partial_t Z(t,z) 
=\left( -\partial_t\Pi(t,\Phi_h^{t,t_0}(z))- J\partial_zh(\Phi^{t,t_0}_h(z))\partial_z\Pi(t,\Phi^{t,t_0}_h(z)) 
+ K(t,\Phi^{t,t_0}_h(z))\right) Y(t,z).
\]
We recall that $K = (\1-\Pi)(\partial_t\Pi + \{h,\Pi\})\Pi$. Since all derivatives of the projector are off-diagonal, we have
\[
-\partial_t\Pi - \{h,\Pi\} + K = - \Pi\left(\partial_t\Pi + \{h,\Pi\}\right)\Pi^\perp
\]
and therefore
\[
\partial_t Z(t,z) =- \Pi(t,\Phi_h^{t,t_0}(z))
\left(\partial_t\Pi(t,\Phi_h^{t,t_0}(z)) + J\partial_zh(\Phi^{t,t_0}_h(z))\partial_z\Pi(t,\Phi^{t,t_0}_h(z)) \right) Z(t,z).
\]
In particular, $\partial_t Z(t,z)$ is an element of the range of $\Pi(t,\Phi^{t,t_0}_h(z))$ and thus 
orthogonal to $Z(t,z)$. Hence, its norm is constant,
$Z(t,z)=0$  and $Y(t,z)\in {\rm Ran}\, \Pi(t,\Phi^{t,t_0}_h(z))$.  

\medskip 

Besides, we have for any $z\in\R^{2d}$
\[
\partial_t Y(t,z) \cdot Y(t,z) = \Omega(t,\Phi^{t,t_0}_h(z))Y(t,z)\cdot Y(t,z) + 
K(t,\Phi^{t,t_0}_h(z))Y(t,z)\cdot Y(t,z) = 0,
\]
because 
\[
\Omega(t,z)^*=-\Omega(t,z)\ \text{and}\ K(t,z)=\Pi^\perp(t,z)K(t,z).
\] 
Therefore, $\|Y(t,z)\|_{\C^N}=1$. 
\end{proof}

\begin{remark}[Polynomial growth of the eigenvector]\label{rem:poly_eigenvec} The above proof shows that the time-evolution of 
$Y(t,z) = \vec V(t,\Phi^{t,t_0}_h(z))$  
is generated by a norm-conserving evolution operator, that is, 
$Y(t) = {\mathcal L}(t,t_0) Y(t_0).$
This observation allows to literally repeat the inductive argument in the proof of \cite[Proposition~C.1]{CF11} for 
inferring from a polynomial bound on the projector $\Pi(t,z)$ a polynomial 
bound for the eigenvector $\vec V(t,z)$. Indeed, if \eqref{bound:projector} holds for $\Pi(t,z)$, then for all   
$T>0$ and $\beta\in\N_0^{2d+1}$ there exists a constant $c_{\beta,T}>0$ such that
\[
\sup_{t\in[t_0,t_0+T],|z|\ge r_0}\|\partial_{t,z}^{\beta} \vec V(t,z)\| \le c_{\beta,T} \langle z\rangle^{|\beta|(1+n_0)}.
\]
\end{remark}

\section{The phase $\Lambda(\sigma)$ and the function $\zeta(\sigma)$}\label{appendix:phase}

 \begin{lemma}\label{lem:phasis}
 Let $\Lambda$ and $\zeta$ be defined as 
 \begin{align}\nonumber
\zeta(\sigma) &=\Phi_2^{t^\flat,t^\flat+ \sigma}\big(\Phi_1^{t^\flat+\sigma,t^\flat}(z^\flat)\big),\\\label{Lambda:eq1}
\Lambda(\sigma) &= S_1(t^\flat+\sigma,t^\flat, z^\flat) +S_2(t^\flat,t^\flat+\sigma,\Phi_1^{t^\flat+\sigma, t^\flat}(z^\flat)) - q(\sigma)\cdot p^\flat.
\end{align}
 We have 
 \begin{align}
 \label{zeta(0)}
 \zeta(0)&=(q(0),p(0))=z^\flat,\;\;\dot\zeta(0)= (\dot q(0),\dot p(0))=J\partial_z (h_1-h_2) (t^\flat, z^\flat)\\
 \label{lambda(0)}
  \Lambda(0) &=\dot\Lambda(0)= 0,\\
 \label{sd}
\ddot\Lambda (0)&= \partial_t (h_2-h_1)-\partial_q h_2\cdot \partial_p(h_2-h_1)+\partial_p h_1 \cdot \partial_q (h_2-h_1)
\end{align}
 \end{lemma}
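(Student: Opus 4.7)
The proof is a direct calculation: I compute $\zeta$, $\Lambda$ and their first two derivatives at $\sigma = 0$ by Taylor expanding the Hamiltonian flows and using the standard differentiation rules for the action $S(t,t_0,z_0)$.

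For $\zeta(\sigma) = \Phi_2^{t^\flat,t^\flat+\sigma}\big(\Phi_1^{t^\flat+\sigma,t^\flat}(z^\flat)\big)$, the identity $\Phi_j^{t^\flat,t^\flat} = \mathrm{id}$ immediately gives $\zeta(0) = z^\flat$. Applying the chain rule together with the two identities
\[
 \tfrac{d}{dt}\Phi_j^{t,t_0}(z)\big|_{t=t_0} = J\partial_z h_j(t_0,z),\qquad
 \tfrac{d}{dt_0}\Phi_j^{t,t_0}(z)\big|_{t_0=t} = -J\partial_z h_j(t,z)
\]
(the second obtained by differentiating $\Phi_j^{t,t_0}\circ\Phi_j^{t_0,t} = \mathrm{id}$ with respect to $t_0$), one finds $\dot\zeta(0) = J\partial_z(h_1-h_2)(t^\flat,z^\flat)$, which is \eqref{zeta(0)}.

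For $\Lambda$ I use four elementary properties of $S(t,t_0,z_0) = \int_{t_0}^t L(s,z(s))\,ds$ with $L = p\cdot\partial_p h - h$: at coincident times $t = t_0$,
\[
 S = 0,\quad \partial_t S\big|_{t=t_0} = L(t_0,z_0),\quad \partial_{t_0}S\big|_{t_0=t} = -L(t,z_0),\quad \partial_{z_0}S\big|_{t=t_0} = 0.
\]
Adopting the convention $(q,p) = \zeta - z^\flat$ (consistent with Lemma~\ref{lem:trans}), $q(0) = 0$ and therefore $\Lambda(0) = 0$. Combining the remaining identities with $\dot q(0) = \partial_p(h_1-h_2)(t^\flat,z^\flat)$ (the $q$-part of $\dot\zeta(0)$) yields
\[
 \dot\Lambda(0) = L_1 - L_2 - p^\flat\cdot(\partial_p h_1 - \partial_p h_2) = h_2 - h_1,
\]
evaluated at $(t^\flat, z^\flat)$, which vanishes since $(t^\flat,z^\flat)\in\Upsilon$.

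For $\ddot\Lambda(0)$, differentiating $S_1(t^\flat+\sigma,t^\flat,z^\flat)$ twice gives $\partial_t L_1 + \{h_1,L_1\}$; for $S_2(t^\flat,t^\flat+\sigma,\Phi_1^{t^\flat+\sigma,t^\flat}(z^\flat))$ a Leibniz expansion produces $\partial_{t_0}^2 S_2 + 2\partial_{t_0}\partial_{z_0}S_2\cdot\dot w(0)$, where the $\partial_{z_0}S_2$ and $\partial_{z_0}^2S_2$ contributions vanish identically by $S(t,t,\cdot)\equiv 0$, and where $\partial_{t_0}^2 S_2\big|_{t_0=t} = -\partial_t L_2 + \{h_2,L_2\}$ and $\partial_{t_0}\partial_{z_0}S_2\big|_{t_0=t} = -\partial_z L_2$ follow from the flow identities above by a second application of Leibniz; finally $-q(\sigma)\cdot p^\flat$ contributes $-p^\flat\cdot\ddot q(0)$ with
\[
 \ddot q(0) = \partial_t\partial_p(h_1-h_2) + \{h_1,\partial_p h_1\} - 2\{h_1,\partial_p h_2\} + \{h_2,\partial_p h_2\}
\]
extracted from a second-order Taylor expansion of the flow composition defining $\zeta$. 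Substituting $L_j = p\cdot\partial_p h_j - h_j$ everywhere and using $\partial_z L\cdot J\partial_z h = \{h,L\}$, all terms linear in $p^\flat$ cancel in the sum, and after expanding $2\{h_1,h_2\} = 2\partial_p h_1\,\partial_q h_2 - 2\partial_q h_1\,\partial_p h_2$ the remaining non-$p^\flat$ terms rearrange to \eqref{sd}. The main obstacle is the bookkeeping in this last step: the cancellation of all $p^\flat$-linear contributions rests on carefully tracking Poisson-bracket signs. With the convention $\{f,g\} = J\nabla f\cdot\nabla g = \partial_p f\,\partial_q g - \partial_q f\,\partial_p g$ from Section~\ref{sec:defclas}, one has $\partial_z L\cdot J\partial_z h = \{h,L\}$ rather than $\{L,h\}$; any sign error here breaks the cancellation.
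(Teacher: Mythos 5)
Your proof is correct and takes essentially the same route as the paper's: a direct second-order Taylor computation of $\zeta$, $S_1$ and $S_2$ at $\sigma=0$, with your coincident-time identities $\partial_{t_0}^2S\big|_{t_0=t}=-\partial_tL+\{h,L\}$ and $\partial_{t_0}\partial_{z_0}S\big|_{t_0=t}=-\partial_zL$ merely repackaging the paper's explicit expansion of the action along the moving base point (the step the paper flags as requiring care in the order-$\sigma$ term of $S_2$). The identities you invoke are correct, the $p^\flat$-linear terms do cancel as you claim, and your convention $q(0)=0$ (rather than the literal reading of \eqref{zeta(0)}) is the one the paper's own proof adopts and the one needed for $\Lambda(0)=0$.
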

 
In particular, we have 
\begin{align*}
&\frac12(\ddot\Lambda(0) -{\dot p(0)\cdot \dot q(0)})\\
&= \frac{1}{2}\left( \partial_t(h_2-h_1) - \partial_q h_2\cdot \partial_p (h_2-h_1)+\partial_p h_1\cdot \partial_q(h_2-h_1) 
+\partial_p(h_2-h_1) \cdot\partial_q (h_2-h_1)\right)\\
&=  \frac{1}{2}\left( \partial_t(h_2-h_1) - \partial_q h_1\cdot \partial_p (h_2-h_1)+\partial_p h_1\cdot \partial_q(h_2-h_1)\right) \\
&=  \frac{1}{2}\left( \partial_t(h_2-h_1) +\left\{\frac{h_1+h_2}{2}, h_2-h_1\right\}\right)
\end{align*}
which yields that \eqref{def:mu} is consistent with \eqref{def:lambda}.

 \begin{proof}
 We begin with the function $\zeta$ and 
 we compute the Taylor expansion at the order~2 for $(q(\sigma),p(\sigma))=\zeta(\sigma)-z^\flat $ at $\sigma=0$.
Let  be $h=h_1, h_2$.  We have :
 \begin{align}\label {DTPhi}
 \Phi_h^{t,t_0}(z) = z &+(t-t_0)J\partial_z h(t_0,z) + \frac{(t-t_0)^2}{2}\left(J\partial^2_{t,z}h(t_0,z)+ J\partial^2_{z,z}h(t_0,z)J\partial_zh(t_0,z)\right)
 \\
 \nonumber
 & + O(|t-t_0|^3).
\end{align} 
Applying this formula, we obtain (omitting the argument $(t^\flat, z^\flat)$ in the functions $h_1$, $h_2$ and their derivatives)
\begin{align*} 
&\Phi_1^{t^\flat+\sigma,t^\flat}(z^\flat)= z^\flat+\sigma J\partial_z h_1 +\frac{\sigma^2}2 \left(J\partial^2_{t,z}h_1+ J\partial^2_{z,z}h_1J\partial_zh_1\right)
  + O(|\sigma|^3),\\
&  \zeta(t) = \Phi_1^{t^\flat+\sigma,t^\flat}(z^\flat) -\sigma J\partial_z h_2(t^\flat+\sigma,\Phi_1^{t^\flat+\sigma,t^\flat}(z^\flat)) 
  +\frac{\sigma^2}2 \left(J\partial^2_{t,z}h_2+ J\partial^2_{z,z}h_2J\partial_zh_2\right)
  + O(|\sigma|^3).
  \end{align*}
  We deduce 
  \begin{align*} 
  \zeta(t) &= z^\flat +\sigma J\partial_z (h_1-h_2) + O(|\sigma|^3)\\
  &+\frac{\sigma^2}2 \left(J\partial^2_{t,z}(h_1-h_2)+ J\partial^2_{z,z}(h_1-h_2)J\partial_zh_1
  +J\partial^2_{z,z} h_2 J\partial_z (h_2-h_1)\right),
  \end{align*} 
and, for further use, the relation 
\begin{align}
\label{C5}
-p^\flat \dot q(0) =& -p^\flat \cdot\partial_q(h_1-h_2),\\
\label{C5''}
-p^\flat \cdot \ddot q(0)=&-p^\flat \cdot (\partial^2_{t,p}(h_1-h_2)+ \partial^2_{z,p}(h_1-h_2)J\partial_zh_1
  +\partial^2_{z,p} h_2 J\partial_z (h_2-h_1))
\end{align}

\medskip

 We continue with the function $\Lambda$ (defined in~\eqref{Lambda:eq1}) and we use Taylor expansion of the actions for general Hamiltonian $h$. In view of~\eqref{def:S} and~\eqref{DTPhi}, we have (omitting the argument $(t_0,z_0)$ in the terms of the form $\partial^\alpha h(t_0,z_0)$)
\begin{align*}
S(t,t_0,z_0) &=  \int_{t_0}^t (p_0-(s-t_0)\partial_qh)\cdot (\partial_p h+(s-t_0) (\partial^2_{t,p}h +\partial^2_{z,p}hJ\partial_z h) )ds\\
&\qquad
-\int_{t_0}^{t} (h+(s-t_0)\partial_t h ) ds  +O((t-t_0)^3)\\
&= (p_0\cdot \partial_p h-h)(t-t_0) -\frac{(t-t_0)^2}{2} (\partial_t h+\partial_qh \cdot \partial_p h -p_0\cdot (\partial_{t,p}^2 h+ \partial^2_{z,p}hJ\partial_z h))\\
&\qquad
+O((t-t_0) ^3).
\end{align*} 
We first apply the formula with $h=h_1$, $t=t^\flat+\sigma$, $t=t^\flat$ and $z=z^\flat$, which gives (when the arguments of the functions are omitted, they are fixed to $(t^\flat,z^\flat)$)
\begin{align*}
S_1(t^\flat+\sigma,t^\flat,z^\flat) = &\, \sigma(p\cdot \partial_p h_1-h_1)\\
&-\frac{\sigma^2}{2} (\partial_t h_1+\partial_qh_1 \cdot \partial_p h_1 -p\cdot (\partial_{t,p}^2 h_1+ \partial^2_{z,p}h_1J\partial_z h_1))
+O(\sigma ^3).
\end{align*}
We now use the same formula with $h=h_2$, $t=t^\flat$, $t_0=t^\flat+\sigma$, $z_0=\Phi_1^{t^\flat+\sigma, t^\flat}(z^\flat)$. We obtain
\begin{align*}
S_2(t^\flat,t^\flat+\sigma,\Phi_1^{t^\flat+\sigma,t^\flat}(z^\flat)) &=\\
- \sigma &(p_1(t^\flat+\sigma,t^\flat,z^\flat)\cdot \partial_p h_2(t^\flat+\sigma,\Phi_1^{t^\flat+\sigma,t^\flat}(z^\flat)) -h_2(t^\flat+\sigma,\Phi_1^{t^\flat+\sigma,t^\flat}(z^\flat)) )\\
& -\frac{\sigma^2}{2} (\partial_t h_2+\partial_qh_2 \cdot \partial_p h_2 -p\cdot (\partial_{t,p}^2 h_2+ \partial^2_{z,p}h_2J\partial_z h_2)) +O(\sigma^3)
\end{align*} 
Note that the treatment of the term of order $\sigma$ has to be performed carefully in the case of $S_2(t^\flat,t^\flat+\sigma,\Phi_1^{t^\flat+\sigma,t^\flat}(z^\flat))$. 
We obtain 
\begin{align*}
S_2(t^\flat,t^\flat+\sigma,\Phi_1^{t^\flat+\sigma,t^\flat}(z^\flat)) =&  -\sigma (p\cdot \partial_p h_2 -h_2)
\\
&-\sigma^2(-\partial_t h_2-\partial_q h_2\cdot \partial_p  h_1+p\cdot (\partial^2_{t,p} h_2 +\partial^2_{z,p} h_2 J\partial_z h_2))\\
&-\frac{\sigma^2}{2} (\partial_t h_2+\partial_qh_2 \cdot \partial_p h_2 -p\cdot (\partial_{t,p}^2 h_2+ \partial^2_{z,p}h_2J\partial_z h_1))
+O(\sigma^3)\\
=& \,  (p\cdot \partial_p h_2 -h_2)\sigma 
\\
&+\frac{\sigma^2}{2} (\partial_t h_2+\partial_qh_2 \cdot \partial_p (2h_1-h_2) -p\cdot (\partial_{t,p}^2 h_2+ \partial^2_{z,p}h_2J\partial_z (2h_2-h_1)\\
&
+O(\sigma^3)
\end{align*}
As a consequence,
\begin{align*}
S_1(t^\flat+\sigma,t^\flat,z^\flat) + & S_2(t^\flat,t^\flat+\sigma,\Phi_1^{t^\flat+\sigma,t^\flat}(z^\flat))=\sigma \, p\cdot \partial_p (h_1-h_2) 
 + \frac{\sigma^2} 2(\partial_t (h_2-h_1)\\
 &-\partial_q h_2\cdot \partial_p(h_2-h_1)+\partial_p h_1 \cdot \partial_q (h_2-h_1) \\
 &+p\cdot (\partial_{t,p}^2(h_1-h_2)+\partial_{z,p}^2 (h_1-h_2)J\partial_z h_1+\partial_{z,p}^2 h_2 J\partial_z(h_1-h_2)))+O(\sigma^3).
\end{align*}
Combining with~\eqref{C5''}, we obtain 
$$\Lambda(\sigma)=  \frac{\sigma^2} 2(\partial_t (h_2-h_1)-\partial_q h_2\cdot \partial_p(h_2-h_1)+\partial_p h_1 \cdot \partial_q (h_2-h_1)) +O(\sigma^3),$$
whence~\eqref{sd}.

\end{proof}

\section{The operators $\mathcal T_{\mu,\alpha,\beta}$}\label{app:T}

We study here the operators   $\mathcal T_{\mu,\alpha,\beta}$ that are defined in~\eqref{transf2} for $(\mu,\alpha,\beta)\in \R^{2d+1}$.  An explicit computation gives the  following  useful connection  with the Fourier transform
 \beq\label{FT}
 {\mathcal F}{\mathcal T}_{\mu,\alpha, \beta}= {\mathcal T}_{\mu+\alpha\cdot\beta, \beta, -\alpha}{\mathcal F}.
\eeq
The next proposition sums up the main information that we will use about these operators. 

\begin{proposition}\label{prop:T}
Let $(\mu,\alpha,\beta)\in \R^{2d+1}$.  
\begin{enumerate}
\item  The operator   $\mathcal T_{\mu,\alpha,\beta}$ maps ${\mathcal S}(\R^d)$ into itself if and only if $\mu\not=0$.
 \item Moreover, if $\mu\not=0$,   $\mathcal T_{\mu,\alpha,\beta}$ is  a metaplectic transformation in the Hilbert space
 $L^2(\R^d)$  multiplied by a complex number: 
 \begin{equation}\label{eq:T}
 {\mathcal T}_{\mu,\alpha, \beta} = \sqrt{\frac{2\pi}{i\mu}}{\rm e}^{\frac{i}{4\mu}(\beta\cdot y -\alpha\cdot D_y)^2}.
 \end{equation}
\item If $\mu\not=0$, $\Gamma\in{\mathfrak S}^+(d)$ and $A\in{\mathcal C}^\infty(\R^{2d})$ is a polynomial function then there exists
$\Gamma_{\mu,\alpha,\beta,\Gamma}\in{\mathfrak S}^ +(d)$ such that
$$
{\mathcal T}_{\mu,\alpha,\beta}  ({\rm op}^w_1(A) g^\Gamma ) = 
\sqrt{\frac{2\pi}{i\mu}} {\rm op}^w_1(A\circ \Phi_{\alpha,\beta} (-(4\mu)^{-1} )g^{\Gamma_{\mu,\alpha,\beta,\Gamma}}
$$
where  $\Phi_{\alpha,\beta}$ satisfies~\eqref{def:Phi} and 
\beq\label{Deq}
\Gamma_{\mu,\alpha,\beta,\Gamma} =    \Gamma  -\frac{(\beta-\Gamma\alpha)\otimes (\beta-\Gamma\alpha) }{2\mu-\alpha\cdot\beta+\alpha\cdot\Gamma\alpha}.
\eeq
\end{enumerate}
\end{proposition}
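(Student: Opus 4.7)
The plan is to work from the alternative representation~\eqref{transf1},
\[
\mathcal T_{\mu,\alpha,\beta}\varphi(y) = \int_\R e^{i(\mu-\alpha\cdot\beta/2)s^2}\, e^{is\beta\cdot y}\,\varphi(y-s\alpha)\,ds,
\]
a scalar oscillatory integral in $s\in\R$. Setting $L = \beta\cdot y - \alpha\cdot D_y$, definition~\eqref{transf2} also reads $\mathcal T_{\mu,\alpha,\beta} = \int_\R e^{i\mu s^2}\, e^{isL}\,ds$; note that $L$ is essentially self-adjoint on $\mathcal S(\R^d)$ as the Weyl quantization of a real linear symbol.

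For part~(1), when $\mu\ne 0$ I would exploit the Gaussian oscillation $e^{i\mu s^2}$ via repeated non-stationary-phase integration by parts in $s$ (using $(2i\mu s)^{-1}\partial_s$ for $|s|$ large, together with a cutoff away from $s=0$). Combined with the Schwartz decay of $\varphi(y-s\alpha)$ in $y$, uniform in $s$ on bounded sets with controllable polynomial growth in $s$, this produces a Schwartz function with semi-norms controlled by those of $\varphi$. For the converse, the degenerate cases $\alpha=0$, $\beta=0$, or $\alpha\cdot\beta=0$ reduce respectively to a Dirac mass along a hyperplane, a line integral of $\varphi$, or a partial Fourier transform multiplied by a pure oscillation, none of which are Schwartz. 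The remaining case $\alpha\cdot\beta\ne 0$ is handled by a direct computation on a test input (in $d=1$ with $\alpha=\beta=1$ one obtains $\mathcal T_0\varphi(y) = e^{iy^2/2}\int e^{-iu^2/2}\varphi(u)\,du$), whose constant modulus in $y$ rules out Schwartz-preservation.

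For part~(2), I would regularize by replacing $\mu$ with $\mu + i\eta$, $\eta>0$, where the integral becomes absolutely convergent. Using the spectral decomposition $L = \int \lambda\, dE_\lambda$ and completing the square $\mu s^2 + s\lambda = \mu(s+\lambda/(2\mu))^2 - \lambda^2/(4\mu)$ pointwise in $\lambda$, each Fresnel integral evaluates explicitly, so that
\[
\mathcal T_{\mu+i\eta,\alpha,\beta} \;=\; \sqrt{\tfrac{2\pi}{i(\mu+i\eta)}}\;\exp\!\left(\tfrac{i}{4(\mu+i\eta)}L^2\right).
\]
Passing to the limit $\eta\downarrow 0$ in the strong operator topology on $\mathcal S(\R^d)$ (using continuity in $\mu$ in the closed upper half-plane away from $0$) yields the operator identity~\eqref{eq:T}. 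Since the symbol $L^2$ is quadratic in $(y,\xi)$, the unitary $\exp(iL^2/(4\mu))$ is, by definition of the metaplectic representation, a metaplectic transformation of $L^2(\R^d)$.

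For part~(3), the Hamilton equations generated by the quadratic symbol $\frac{1}{4\mu}(\beta\cdot y - \alpha\cdot\xi)^2$ conserve the linear form $\phi(y,\xi) = \beta\cdot y - \alpha\cdot\xi$, so they integrate explicitly at unit time to the symplectic matrix $\Phi_{\alpha,\beta}(-(4\mu)^{-1})$ displayed in~\eqref{def:Phi}. Exact Egorov for quadratic Hamiltonians yields
\[
\exp\!\left(\tfrac{i}{4\mu}L^2\right)\op_1^w(A) \;=\; \op_1^w\!\left(A\circ \Phi_{\alpha,\beta}(-(4\mu)^{-1})\right)\,\exp\!\left(\tfrac{i}{4\mu}L^2\right),
\]
and the Gaussian-to-Gaussian action of the metaplectic transform, via formula~\eqref{def:Gamma} applied to the blocks $(A,B,C,D)$ of $\Phi_{\alpha,\beta}(-(4\mu)^{-1})$ (rank-one perturbations of the identity), sends $g^\Gamma$ to $g^{\Gamma_{\mu,\alpha,\beta,\Gamma}}$. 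Substituting these rank-one blocks into the Möbius-type transform $(C+D\Gamma)(A+B\Gamma)^{-1}$ and simplifying by a Sherman--Morrison-type inversion produces the displayed rank-one correction~\eqref{Deq}. The main obstacle is part~(2): ensuring that the $\eta\downarrow 0$ limit is strong enough on $\mathcal S(\R^d)$ and tracking the branch of $\sqrt{\cdot}$ consistently through the functional calculus so that the scalar prefactor $\sqrt{2\pi/(i\mu)}$ emerges with the correct sign; part~(3) is then bookkeeping.
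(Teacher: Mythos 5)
Your proposal is correct and follows essentially the same route as the paper: the Fresnel formula $\int_\R e^{i\mu s^2}e^{is\tau}\,ds=\sqrt{2\pi/(i\mu)}\,e^{\tau^2/(4i\mu)}$ combined with the functional calculus for the self-adjoint operator $L=\beta\cdot y-\alpha\cdot D_y$ gives (2) (the paper applies it directly as an oscillatory integral where you insert the regularization $\mu+i\eta$), explicit evaluation of the degenerate cases with $\mu=0$ gives the ``only if'' part of (1), and exact Egorov for the quadratic Hamiltonian $\frac{1}{4\mu}L^2$ gives (3). The only notable variation is that you derive the width formula \eqref{Deq} from the M\"obius action \eqref{def:Gamma} of the blocks of $\Phi_{\alpha,\beta}(-(4\mu)^{-1})$ via a Sherman--Morrison inversion, whereas the paper computes $\mathcal T_{\mu,\alpha,\beta}\,g^{\Gamma}$ directly by completing the square in the $s$-integral, which reaches \eqref{Deq} a little more quickly.
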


\begin{remark}
The matrix $\Gamma_{\mu,\alpha,\beta,\Gamma}$ is in $\mathfrak S^+(d)$ since $g^{\Gamma_{\mu,\alpha,\beta,\Gamma}}$ is proved to be Schwartz class. 
It is also important to notice that 
$2\mu-\alpha\cdot\beta+\alpha\cdot \Gamma\alpha$ is non zero because its imaginary part is non zero.  
\end{remark}

\begin{proof}
Point (1) is linked with Point (2) and comes from the formula~\eqref{transf1} and\eqref{transf2}. Indeed, when $\mu\not=0$, 
equation~\eqref{eq:T} is an application of relation~\eqref{transf2} and of functional calculus  on the self-adjoint operator $(\beta\cdot y -\alpha\cdot D_y)^2$  and the Fourier-transform formula  of complex Gaussian functions:
\beq\label{FG}
\int_{-\infty}^{+\infty}{\rm e}^{is^2\mu}{\rm e}^{is\tau}ds =\sqrt{\frac{2\pi}{i\mu}}{\rm e}^{\frac{\tau^2}{4i\mu}}, \;{\rm with}\;\arg(i\mu)\in]-\pi, \pi[.
\eeq
It remains to analyze the case where $\mu=0$. The computations are different whether $\alpha\cdot\beta=0$ or not. We assume $\alpha\not=0$ and we set
$$\hat \alpha=\frac\alpha{|\alpha|},\;\; y= (y\cdot \hat\alpha)\hat\alpha +y_\perp.$$
Similar formulas can be obtained when $\beta\not=0$ using~\eqref{FT}. 
Let us first assume $\alpha\cdot\beta=0$.
\begin{align*}
\mathcal T_{0,\alpha,\beta} & = \int \e^{is\beta \cdot y_\perp} \varphi( y\cdot \hat \alpha \hat \alpha -s\alpha +y_\perp) ds\\
&=|\alpha|^{-1} \int \e^{i|\alpha|^{-1} (y\cdot \hat \alpha-\sigma) (\beta\cdot y_\perp)} \varphi (\sigma \hat \alpha +y_\perp)d\sigma\\
&= |\alpha|^{-1} \e^{i |\alpha|^{-1} (y\cdot \hat \alpha) (\beta \cdot y_\perp)} \mathcal F_{\alpha} \varphi\left( \frac{\beta \cdot y_\perp}{|\alpha|} ,y_\perp\right)
\end{align*}
where $\varphi\in{\mathcal S}(\R^d)$, $y_\perp=y-{\hat\alpha\cdot y} \hat\alpha$ and $\mathcal F_\alpha$ is the partial Fourier transform in the direction~$\alpha$. \\
In the case where $\alpha\cdot\beta\not=0$, we write 
\begin{align*}
\mathcal T_{0,\alpha,\beta} & =
 (2\pi)^{-1} \int_{\R^2} \e^{-is^2 \frac{\alpha\cdot\beta}{2}+is\beta\cdot y+i\eta (y\cdot\alpha -s) }
\mathcal F_\alpha\varphi(\eta,y_\perp) d\eta ds\\
&= \sqrt{\frac{i}{\pi\beta\cdot  \alpha}} \int \e^{ i\frac {(\beta\cdot y -\eta)^2 } {2\alpha\cdot \beta} +i\eta y\cdot\alpha} \mathcal F_\alpha\varphi(\eta,y_\perp) d\eta \\
&=  \sqrt{\frac{i}{\pi\beta\cdot  \alpha}} 
 \e^{i \frac{(\beta\cdot y)^2} { 2\beta\cdot \alpha}} \int \e^{-i\eta \frac{\beta_\perp\cdot y_\perp} {\beta\cdot\alpha} }
e^{i \frac{\eta^2} {2\beta\cdot\alpha}} \mathcal F_\alpha\varphi(\eta,y_\perp) d\eta \\
& =  \sqrt{\frac{i}{\pi\beta\cdot  \alpha}}  \e^{i \frac{(\beta\cdot y)^2} { 2\beta\cdot \alpha}} 
\int \e^{-i\eta \frac{\beta_\perp\cdot y_\perp} {\beta\cdot\alpha} }
\mathcal F_\alpha\left(e^{i \frac{(D_{y}\cdot \hat \alpha)^2} {2\beta\cdot\alpha} } \varphi\right)(\eta,y_\perp) d\eta\\
&= \sqrt{\frac{4i\pi}{\beta\cdot  \alpha}}  \e^{i \frac{(\beta\cdot y)^2} { 2\beta\cdot \alpha}}  
e^{i \frac{(D_{y}\cdot\hat\alpha)^2} {2\beta\cdot\alpha}}  \varphi\left(-\frac{\beta_\perp\cdot y_\perp} {\beta\cdot\alpha}+ y_\perp\right)
\end{align*}
This concludes the proof of Points~(1) and~(2).

\medskip
 
Point (3) derives from  the formulation of~$\mathcal T_{\mu,\alpha,\beta}$ as a metaplectic transform. We use general  results concerning the action of a metaplectic transformation  on Gaussian~$g^\Gamma$ (for details see \cite{ CombescureRobert}, Chapter 3).  With the quadratic Hamiltonian
 $K(y,\eta) = (\beta\cdot y-\alpha\cdot\eta)^2$, one associates the   linear flow $\Phi_{\alpha,\beta}(t)=(\Phi_{ij}(t))_{1\leq i,j\leq 2}$ (in a $d\times d$ block form) given by~\eqref{def:Phi}. Besides, the Egorov theorem and the propagation of gaussian are both  exact: we have 
 $$
{\rm e}^{-it\hat K}({\rm op}_1^w(A)g^\Gamma) = {\rm op}_1^w(A\circ \Phi_{\alpha,\beta}(t)){\rm e}^{-it\hat K} g^\Gamma=
({\rm op}_1^w(A\circ \Phi_{\alpha,\beta}(t))g^{\Gamma_t}
$$
where the matrix $\Gamma_t\in \mathfrak S^+(d)$ is given by
$$
\Gamma_t = (\Phi_{21}(t) +\Phi_{22}(t)\Gamma)(\Phi_{11}(t)+\Phi_{12}(t)\Gamma)^{-1} 
,\;\; c_{\Gamma_t} ={\rm det}^{-1/2}(A(t) +B(t)\Gamma),$$
where 
We deduce  that if $\mu\not=0$,
$$\mathcal T_{\mu,\alpha,\beta} g^\Gamma=\sqrt{\frac{2\pi}{i\mu}} \e^{\frac{i}{4\mu} \hat K} g^\Gamma= \sqrt{\frac{2\pi}{i\mu}} g^{\Gamma_{-(4\mu)^{-1}}}.$$
This induces the existence of the matrix $\Gamma_{\mu,\alpha,\beta,\Gamma}\in\mathfrak S^+(d)$ of Point (2) of the Proposition. It remains to prove the formula~\eqref{Deq}.
We use that if $\varphi =g_\Gamma$, we have
  $$
  {\mathcal T}_{\mu,\alpha,\beta} g^\Gamma(y) = c_\Gamma \int_{-\infty}^{+\infty}{\rm e}^{is^2(\mu-\alpha\cdot\beta/2)}{\rm e }^{is\beta\cdot y}
  {\rm e}^{\frac{i}{2}(y-s\alpha)\cdot(\Gamma(y-s\alpha))}ds.
  $$
  Applying again \eqref{FG} we get, 
\[
   {\mathcal T}_{\mu,\alpha,\beta} g^\Gamma(y)=c_\Gamma \sqrt{\frac{\pi}{2\mu-\alpha\cdot\beta+\alpha\cdot\Gamma\alpha}}\;
   {\rm e}^{\frac{i}{2}\left(y\cdot\Gamma y -\frac{(y\cdot(\beta-\Gamma\alpha))^2}{2\mu-\alpha\cdot\beta+\alpha\cdot\Gamma\alpha}\right)},
\]
which gives~\eqref{Deq}.
\end{proof}

\end{document}